\documentclass[a4paper,11pt]{article}

\usepackage{newtxtext,newtxmath}

\usepackage[active]{srcltx}
\usepackage[utf8]{inputenc}
\usepackage[english]{babel}
\usepackage{graphicx}
\usepackage[T1]{fontenc}
\usepackage{amsmath}

\usepackage{amsthm}
\usepackage{bm} 
\usepackage{bbm}
\usepackage{eqnarray}
\usepackage{tikz}
\usepackage[numbers]{natbib}
\usepackage{array}
\usepackage{esint}
\usepackage{enumitem}
\usepackage{makecell}
\usepackage[left=2.05cm,right=2.05cm,top=2.95cm,bottom=2.8cm]{geometry}

\usepackage[pdfborder={0 0 0}]{hyperref}

\numberwithin{equation}{section} 

\newcommand{\R}{\mathbb{R}}

\newcommand{\N}{\mathbb{N}}
\newcommand{\Prob}{\mathcal{P}}
\newcommand{\M}{\mathcal{M}} 
\newcommand{\A}{\mathcal{A}} 
\newcommand{\B}{\mathcal{B}} 
\newcommand{\F}{\mathcal{F}} 
\newcommand{\T}{\mathcal{T}}

\newcommand{\1}{\mathbbm{1}} 

\newcommand{\ddr}{\mathrm{d}} 
\newcommand{\dr}{\partial}
\newcommand{\Id}{\mathrm{Id}}

\newcommand{\mubf}{\bm{\mu}}
\newcommand{\nubf}{\bm{\nu}}

\newcommand{\Mm}{\mathcal{M}_{\pi_1 \to m}}

\newcommand{\TEE}{\mathcal{T}_{E, \mathrm{Eul}}}


\newcommand{\dst}[1]{\displaystyle{#1}}

\newtheorem{theo}{Theorem}[section]
\newtheorem*{theo*}{Theorem}
\newtheorem{prop}[theo]{Proposition}
\newtheorem{crl}[theo]{Corollary}
\newtheorem{lm}[theo]{Lemma}
\newtheorem{defi}[theo]{Definition}

\newtheorem{asmp}{Assumption}
\newtheorem*{asmp*}{Assumption}

\newtheorem*{question}{Question}

\theoremstyle{remark}
\newtheorem{rmk}[theo]{Remark}
\newtheorem{example}[theo]{Example}

\begin{document}

\title{Lifting functionals defined on maps to measure-valued maps \\ via optimal transport} 
\author{Hugo Lavenant}
\date{\today}

\maketitle

\begin{abstract}
How can one lift a functional defined on maps from a space $X$ to a space $Y$ into a functional defined on maps from $X$ into $\Prob(Y)$ the space of probability distributions over $Y$? Looking at measure-valued maps can be interpreted as knowing a classical map with uncertainty, and from an optimization point of view the main gain is the convexification of $Y$ into $\Prob(Y)$. We explain why trying to single out the largest \emph{convex} lifting amounts to solve an optimal transport problem with an infinity of marginals which can be interesting by itself. Moreover we show that, to recover previously proposed liftings for functionals depending on the Jacobian of the map, one needs to add a restriction of additivity to the lifted functional. 

\medskip

\noindent \emph{2020 Mathematics Subject Classification}. Primary 49Q22; Secondary 49J45.
\end{abstract}

\section{Introduction}

We take for given $E$ a functional defined on (a subset of) maps $u : X \to Y$. As a running example, the reader can think that $X$, $Y$ are Euclidean spaces and 
\begin{equation}
\label{eq:introp_dirichlet}
E(u) = \frac{1}{2} \int_X |\nabla u|^2
\end{equation}  
is the Dirichlet energy, or more generally a functional of the type
\begin{equation}
\label{eq:intro_energy_generic}
E(u) = \int_X W(\nabla u(x)) \, \ddr x + \int_X f(x,u(x)) \, \ddr x.
\end{equation}
Here $\nabla u$ denotes the Jacobian matrix of $u$, and $W$ is convex. We consider measure-valued maps: instead of $u$ we take $\mubf : X \to \Prob(Y)$ valued in the space of probability distributions over $Y$. This can be interpreted as knowing $u(x) \in Y$ only up to uncertainty, thus replacing $u(x)$ by a probability distribution over $Y$. A classical map is recovered if we set $\mubf(x) = \delta_{u(x)}$ for any $x$, being $\delta_{u(x)}$ the Dirac mass located at $u(x)$. From a mathematical point of view the major gain is that the space $\Prob(Y)$ is convex, whatever structure one has on $Y$. In this framework, we study the following question.

\begin{question}[raw]
How can one lift the functional $E$ defined on maps $X \to Y$ into a functional $\T$ defined on the space of measure-valued maps $X \to \Prob(Y)$? 
\end{question}

This question is not new, in particular in the imaging community where functionals like~\eqref{eq:intro_energy_generic} are of great importance but typically suffer from non-convexity. In such a context, there have been several proposals for lifting of~\eqref{eq:intro_energy_generic} to measure-valued maps with a particular emphasis on the case of total variation, that is, when $W$ is a norm~\cite{laude2016sublabel, Vogt2018, vogt2020connection}. Non-convexity either comes from the term $\int_X f(x,u(x)) \, \ddr x$, usually referred as ``data-fitting term'', or from the space itself if $Y$ is a manifold: relaxing from $Y$ to $\Prob(Y)$ is really about gaining convexity.
Another proposal, equivalent to~\cite{vogt2020connection}, is to focus on the Dirichlet energy~\eqref{eq:introp_dirichlet} and to extend it to a Dirichlet energy for measure-valued maps. This has been suggested by Brenier~\cite[Section 3]{Brenier2003} and we worked on it in~\cite{Lavenant2019, LavenantPhD}. The motivation was to define \emph{harmonic maps} valued in the Wasserstein space, in analogy with the concept of geodesic valued in such space. Some independent work motivated by applications in surface matching~\cite{Solomon2013} proposed a similar definition of Dirichlet energy with the purpose of gaining convexity when $X$ and $Y$ are surfaces. Most proposed answers concentrate on functionals $E$ containing only first order derivatives, though there has been recently some proposal for functionals depending on the second order derivatives of the function $u$~\cite{vogt2019functional,loewenhauser2018functional,vogt2020connection}. Even if it is not the concern of this article, most of the aforementioned works also include a discretization and an algorithm to find minimizers of the (discretized) lifted functional.

\paragraph{Refining our investigation}

In the present work rather than proposing a new answer we aim at designing a framework to characterize the ``best'' possible relaxation. The starting point of our investigation is the following remark of Savaré and Sodini \cite{SavareSodoni2022} about the optimal transport problem, somewhat elementary, but very insightful. Given a function $c : Y \times Y \to [0, + \infty]$ of two variables, one can try to lift it up into a function $\T : \Prob(Y) \times \Prob(Y) \to [0, + \infty]$ defined over pair of measures. Following the canonical embedding $y \in Y \mapsto \delta_y \in \Prob(Y)$, we can lift $c$ into a map $\widetilde{\T}_c$ defined over $\Prob(Y)^2$ by 
\begin{equation*}
\widetilde{\T}_c(\mu,\nu) = \begin{cases}
c(y_1,y_2) & \text{if } \mu = \delta_{y_1} \text{ and } \nu = \delta_{y_2}, \\
+ \infty & \text{otherwise},
\end{cases}
\end{equation*}
but this extension is not convex nor lower semi-continuous for the topology of narrow convergence. To cope with these issues, it is natural to look for the convex and lower semi-continuous envelope, that is, the largest convex and lower semi-continous function below $\widetilde{\T}_c$. The work~\cite{SavareSodoni2022} characterizes this envelope, under suitable topological assumptions, as the optimal transport cost, defined for $\mu, \nu$ probability distributions over $Y$ by
\begin{equation*}
\mathcal{T}_c(\mu,\nu) = \min_\pi \left\{ \int_{Y \times Y} c(y_1,y_2) \, \pi(\ddr y_1, \ddr y_2) \ : \ \pi \in \Pi(\mu,\nu) \right\}.
\end{equation*}
Here $\Pi(\mu,\nu)$ is the set of probability distributions over $Y \times Y$ having marginals $\mu,\nu$. In~\cite{SavareSodoni2022} the authors use this observation to derive a very elegant proof of the Kantorovich duality. 

We can apply the same strategy in our case. Given a map $u : X \to Y$, we can embed it as a measure-valued map $\mubf_u : X \to \Prob(Y)$ via $\mubf_u(x) = \delta_{u(x)}$. Following the observation of Savaré and Sodini we investigate the following.

\begin{question}[refined]
Writing $\widetilde{\T}_E$ for the functional defined on the space of measure-valued maps $X \to \Prob(Y)$ by 
\begin{equation}
\label{eq:defi_tilde_T}
\widetilde{\T}_E(\mubf) = \begin{cases}
E(u) & \text{if } \mubf = \mubf_u \text{ for some } u : X \to Y, \\
+ \infty & \text{otherwise},
\end{cases}
\end{equation}
what is its convex and lower semi-continuous envelope, that is, the largest convex and lower semi-continuous functional $\T$ such that $\T \leq \widetilde{\T}$?
\end{question}

We need to specify the topology on the space of measure-valued map for this question to be well-defined. We will put the topology of narrow convergence on the measures over the product space $X \times Y$, as we detail below. We will show that the envelope is the ``Lagrangian'' lifting $\T_E$, which consists in solving a multimarginal optimal transport problem with an infinity of marginals. On the other hand, this lifting $\T_E$ does \emph{not} coincide with the liftings introduce in the imaging or optimal transport community~\cite{Vogt2018, vogt2019lifting, Brenier2003, Lavenant2019, Solomon2013}. These works rather rely on a ``Eulerian'' lifting introduced below, that we call $\TEE$. We will also give a characterization of the lifting $\TEE$, as a convex lower semi-continuous envelope, but with an additional requirement of subadditivity.

\paragraph{The largest relaxation}

We first concentrate on the convex and lower semi-continuous envelope of $\widetilde{\T}_E$, defined in~\eqref{eq:defi_tilde_T}.
In this setting $X,Y$ are Polish spaces. As in the case of the Dirichlet energy the maps $u$ are only defined up to a negligible set, we allow for a reference measure $m$ on $X$, and we work rather with $L^0(X,Y;m)$ or $L^0(X,\Prob(Y);m)$ the space of equivalence classes of maps from $X$ to $Y$ or to $\Prob(Y)$, quotiented by the relation of being equal $m$-a.e. Under a suitable assumption displaying an interplay between the functional $E$, the spaces $X,Y$ and the measure $m$ (see Assumption~\ref{asmp:reg_XYm_E}), we are able 
to prove that to the convex and lower semi-continuous envelope of $\widetilde{\T}_E$ is
\begin{equation}
\label{eq:intro_generalized_MMOT}
\mathcal{T}_E(\mubf) = \inf_Q \left\{ \int_{L^0(X,Y;m)} E(u) \, Q(\ddr u)  \ : \  Q \in \Pi(\mubf) \right\},
\end{equation}
where now $Q$ is now a probability measure on $L^0(X,Y;m)$ generalizing the transport plan $\pi$, and the assertion $Q \in \Pi(\mubf)$ generalizing the marginal condition reads, as an equality $m$-a.e. of $\Prob(Y)$-valued maps,
\begin{equation*}
\mubf = \int_{L^0(X,Y;m)} \mubf_u \, Q(\ddr u).
\end{equation*}

\begin{center}
\fbox{\begin{minipage}{0.9\textwidth}
The first main contribution of this article is to formalize properly the right hand side of~\eqref{eq:intro_generalized_MMOT} and prove (see Theorem~\ref{theo:multimarginalOT_lifting}) that it coincides with the convex and lower semi-continuous envelope of $\widetilde{\T}_E$ defined in~\eqref{eq:defi_tilde_T}. A corollary of this result is the dual formulation of $\T_E$ (see~\eqref{eq:duality_MMOT}).
\end{minipage}}
\end{center}

\paragraph{Reinterpreting previous results}

Problem~\eqref{eq:intro_generalized_MMOT} can actually be seen as a multimarginal optimal transport problem with an infinity of marginals. Specifically, when the space $X = \{ x_1, x_2, \ldots, x_N \}$ is finite and contains $N$ points, and $m$ is the counting measure, unpacking~\eqref{eq:intro_generalized_MMOT} reveals a multimarginal problem with $N$ marginals. In this case, the cost function $c : Y^N \to [0, + \infty]$ is recovered as
\begin{equation*}
c(u(x_1), u(x_2), \ldots, u(x_N)) = E(u),
\end{equation*}
and a map $\mubf$ from $X$ to $\Prob(Y)$ is naturally seen as a collection of $N$ marginals, that is, an element of $\Prob(Y)^N$.  

More interestingly, when $(X,m)$ is a segment of $\R$ endowed with the Lebesgue measure, say $X = [0,1]$, while $Y$ is a Banach space, and $E$ is an functional of the type
\begin{equation*}
E(u) = \frac{1}{p} \int_0^1 |\dot{u}_t|^p \, \ddr t
\end{equation*}
for some $p \in (1,+ \infty)$, then the problem~\eqref{eq:intro_generalized_MMOT} is the one which appears when studying the probabilistic representation of the continuity equation, also known as superposition principle. In this case $\mubf = (\mu_t)_{t \in [0,1]}$ is a curve valued in the space $\Prob(Y)$, and the seminal work of Lisini~\cite{Lisini2007} (see also~\cite[Chapter 8]{AGS}) yields alternative and meaningful expressions for the lifted functional $\T_E$:
\begin{align}
\label{eq:intro_dynamic_curves}
\mathcal{T}_E((\mu_t)_{t \in [0,1]}) & = \min_v \left\{ \frac{1}{p} \int_0^1 \int_Y |v(t,y)|^p \, \mu_t(\ddr y) \, \ddr t \ : \ \dr_t \mu + \mathrm{div}(v \mu) = 0 \right\} \\
\label{eq:intro_metric_curve}
& = \begin{cases}
\displaystyle{\frac{1}{p} \int_0^1 |\dot{\mu}_t|^p \, \ddr t} & \text{if } \mu \in \mathrm{AC}^p([0,1],(\Prob_p(Y), W_p)), \\
 + \infty & \text{otherwise}. 
\end{cases} 
\end{align}
In the first equation the minimum is taken over all $v = v(t,y)$ such that $\| v(t,\cdot) \|^p_{L^p(Y,\mu_t)}$ is integrable in time, and where the continuity equation $\dr_t \mu + \mathrm{div}(v \mu) = 0$ is understood in a weak sense. In this context, $(\mu_t)_{t \in [0,1]}$ describes the evolution in time of a distribution of mass, and $v(t,y)$ is interpreted as the velocity of the particle located at time $t$ in position $y$. The formula~\eqref{eq:intro_generalized_MMOT} for $\T_E$ corresponds to a \emph{Lagrangian} point of view: the measure $Q$ tracks the distributions of all trajectories $(u(t))_{t \in [0,1]}$. The right hand side of~\eqref{eq:intro_dynamic_curves} is usually called the \emph{Eulerian} point of view: one only records $v(t,y)$ the velocities at different instant and positions. If both $Q$ and $v$ are the optimal ones, we have precisely $\dot{u}(t) = v(t,u(t))$ for a.e. $t$ and for $Q$-a.e. curve $u$. In the second equation above, $\mathrm{AC}^p$ stands for ``$p$-absolutely continuous curves''~\cite[Chapter 1]{AGS}, and these curves are valued in $(\Prob_p(Y), W_p)$ the Wasserstein space of order $p$ built over $Y$~\cite[Chapter 7]{AGS}. The expression $|\dot{\mu}_t|$ stands for the metric derivative~\cite[Chapter 1]{AGS}.  

In the case $E$ is an functional of higher order, specifically if 
\begin{equation*}
E(u) =  \int_0^1 |\ddot{u}_t|^2 \, \ddr t,
\end{equation*}
then the functional $\T_E$ has been proposed as quantity to minimize in order to recover \emph{splines} valued in the Wasserstein space \cite{benamou2019second, chen2018measure}. 
In~\cite{benamou2019second}, at least three possible models for splines valued in the Wasserstein are investigated. The one corresponding to $\T_E$, with $E$ as above, is what the authors called ``the Kantorovich relaxation'' and it is proposed by analogy with relaxations proposed in fluid dynamics~\cite{brenier1989least}. Our result shed a new light on this ``Kantorovich relaxation'', by proving it is the convex and lower semi-continuous envelope of the functional for splines for classical curves.


\paragraph{Beyond the case of curves}

When $X = \Omega$ is a bounded subset of an Euclidean space $\R^d$, endowed with its Lebesgue measure $m$, while $Y$ is a Euclidean space or a Riemannian manifold, the situation becomes more intricate. Concentrating on the case
\begin{equation*}
E(u) =  \int_\Omega W(\nabla u(x)) \, \ddr x
\end{equation*}  
for a convex function $W : TY \otimes \R^d \to [0, + \infty]$ taking in argument the Jacobian matrix of $u$, several authors have followed the Eulerian point of view and have proposed a functional mimicking the dynamical formulation~\eqref{eq:intro_dynamic_curves} for curves of measures, which is different than $\T_E$ and which reads 
\begin{equation*}
\TEE(\mubf) =  \min_{v} \left\{ \int_{\Omega \times Y} W(v(x,y)) \,  \mubf_x(\ddr y) \, \ddr x \ : \ \nabla_x \mubf + \nabla_y \cdot (v \mubf) = 0 \right\},
\end{equation*}
where the minimum is taken over $v : \Omega \times Y \to TY \otimes \R^d$ which can be interpreted as a ``field of Jacobian matrix'', that is, $v(x,y)$ represents a linear operator from $\R^d = T_x \Omega$ into $T_yY$. The equation $\nabla_x \mubf + \nabla_y \cdot (v \mubf) = 0$ which has to be interpreted in a weak sense is the natural extension of the continuity equation $\dr_t \mu + \nabla \cdot(v \mu)$ featured in~\eqref{eq:intro_dynamic_curves}. We refer to the works~\cite{Brenier2003,Solomon2013,Lavenant2019,vogt2020connection} already mentioned and to Section~\ref{sec:lifting_TEE} of the present work to see $\TEE$ presented in this way. It also has a ``dual'' formulation which reads
\begin{multline*}
\TEE(\mubf) =  \sup_{\varphi} \Bigg\{ - \int_{\Omega \times Y} \left( \nabla_x \cdot \varphi(x,y) + W^*(\nabla_y \varphi(x,y))  \right) \, \mubf_x(\ddr y) \, \ddr x \ : \ \varphi \in C^1_c(\Omega \times Y, \R^d) \\
 \text{ and }W^*(\nabla_y \varphi) < + \infty \text{ on } \Omega \times Y  \Bigg\},
\end{multline*}
where now the supremum is taken over smooth and compactly supported test functions. The works~\cite{laude2016sublabel, Vogt2018, vogt2019lifting} introduced $\TEE$ via this dual formulation. 

In the present work we focus on $Y$ being a Euclidean space instead of a Riemannian manifold, as it is challenging enough to analyze the difference between $\T_E$ and $\TEE$ in this setting.

\paragraph{Understanding the difference between $\T_E$ and $\TEE$}

Though the Eulerian lifting is convex, lower semi-continuous, and below $\widetilde{\T}_E$, and even though $\TEE = \T_E$ in the case $X = [0,1]$ as stated in~\eqref{eq:intro_dynamic_curves}, in general one has $\TEE(\mubf) < \T_E(\mubf)$.

Our second contribution is to explain precisely why this discrepancy happens, and to characterize $\TEE$ as a convex and lower semi-continuous envelope of $\widetilde{\T}_E$, but over a restricted class of measure-valued functional. 
The problem has to do with the integral representation of $\TEE$, which implies locality and additivity, while $\T_E$ does not satisfy it. Precisely, following the literature on $\Gamma$-convergence~\cite[Chapter 14 and 15]{dalMaso2012introduction}, we see our functionals as depending on both a function $u$ and an open set $A \subseteq \Omega$, that is 
\begin{equation*}
E(u,A) =  \int_A W(\nabla u(x)) \, \ddr x.
\end{equation*} 
Such a functional is local (the value of $E(u,A)$ only depends on the restriction of $u$ to $A$) and moreover \emph{additive}, in the sense that
\begin{equation*}
E(u, A_1 \cup A_2) = E(u, A_1) + E(u,A_2)
\end{equation*}
provided $A_1$, $A_2$ are disjoint sets. By construction $\TEE$ is also additive, so that for disjoint sets $A_1$, $A_2$,
\begin{equation*}
\TEE(\mubf, A_1 \cup A_2) = \TEE(\mubf, A_1) + \TEE(\mubf,A_2).
\end{equation*}
On the other hand, the example of the theory of $Q$-functions~\cite{DeLellisSpadaro} already mentioned in one of our previous work~\cite{Lavenant2019} shows that $\T_E$ is not additive, but only superadditive: $\T_E(u, A_1 \cup A_2) \geq \T_E(u, A_1) + \T_E(u,A_2)$, and the inequality is strict in general. Actually, this is the case when $X = \mathbb{S}^1$ is the unit circle.

\begin{center}
\fbox{\begin{minipage}{0.9\textwidth}
Our second main contribution, in Theorem~\ref{theo:main}, is to prove that the Eulerian lifting $\TEE$ is the convex, lower semi-continuous and \emph{subadditive} envelope of the functional $\widetilde{\T}_E$ defined in~\eqref{eq:defi_tilde_T}.
\end{minipage}}
\end{center}

\paragraph{An open question}

Consider a functional of second order for a map $u : \Omega \subset \R^d \to \R^q$:
\begin{equation*}
E(u) = \int_\Omega H(\nabla^2 u(x)) \, \ddr x,
\end{equation*}
being $H$ a convex function over $q \times d \times d$ tensors.
What is the convex, lower semi-continuous and \emph{subadditive} envelope of the functional $\widetilde{\T}_E$ in this case?
Though some lifting have been proposed \cite{loewenhauser2018functional,vogt2019functional}, it is not clear that they are optimal ones, and their theoretical properties are not well understood. We leave this investigation for future work.

\paragraph{Other possible lifting spaces} 

In this work we restrict the study to the convexification of $Y$ into $\Prob(Y)$. There are other fruitful ways to embed the space of functions from $X$ to $Y$ into a larger space to gain convexity. If $Y = \R$ is one dimensional, then one could replace $u$ by its sublevel set function $(x,z) \mapsto \1_{\{ z \leq u(x) \}}$ and optimize in this larger space of hypographs \cite{pock2008convex, pock2010global}. This method, which originated from calculus of variations~\cite{alberti2003calibration} has been extended also to the vector-valued case~\cite{strekalovskiy2014convex}. We refer the reader to Section 2.6.1 of~\cite{vogt2020connection} for some connections between this approach and the one of the present work. More recently the authors of~\cite{mollenhoff2019lifting} proposed to embed $u$ as a current on the space $X \times Y$ (concentrated on the graph of $u$) in order to obtain a convex relaxation of~\eqref{eq:intro_energy_generic} when $W$ is not convex, but only polyconvex. As a side remark, for the lifting of a subclass of polyconvex functionals, we also proposed in~\cite{ghoussoub2021hidden} a formulation for measure-valued maps based on a generalization of $\TEE$, but such generalization is probably not additive and does not fit well in the framework of this article.

\paragraph{Organization of the paper}

After recalling some notations and classical results in Section~\ref{sec:preliminaries}, we state the multimarginal optimal transport problem with an infinity of marginals in Section~\ref{sec:generalMMOT},
and show it is the convex and lower semi-continuous envelope of $\widetilde{\T}_E$. We then move to the Eulerian lifting and its main properties in Section~\ref{sec:lifting_TEE}. Eventually we explain the difference between the two liftings: that $\T_E$ is in general not additive, while $\TEE$ is the convex, lower semi-continuous and subadditive envelope of $\widetilde{\T}_E$. 
This is done in Section~\ref{sec:difference_liftings}. We collect some technical results which would have overburden the core of the article in an appendix.  

\section*{Acknowledgments}

We would like to thank Giuseppe Savaré for several interesting discussions and for introducing us to the literature on the localization of functionals presented in Section~\ref{sec:localization}, Simone Di Marino for pointing out an error in an earlier version of this article, as well as an anonymous referee for suggestions improving the clarity of this work. The author is affiliated to the Bocconi Institute for Data Science and Analytics (BIDSA). 

\section{Running assumptions and some preliminaries}
\label{sec:preliminaries}

We first settle some notations, recall standard results, and introduce measure-valued maps. 

Note that all functionals that we consider will take values in $[0, + \infty]$: they are non-negative (the extension to functionals bounded from below by a constant is straightforward) and they may take the value $+ \infty$. We use the abbreviation ``l.s.c.'' for ``lower semi-continuous''. 

\paragraph{Some standard notations}

We will use $a \cdot b$ to denote either the canonical scalar product between vectors $a,b \in \R^d$, or the canonical scalar product between matrices $a,b \in \R^{q \times d}$, given some integers $d, q \geq 1$. On the other hand, the product $ab$ will stand for the matrix vector product between $a \in \R^{q \times d}$ and $b \in \R^d$. 

For two metric spaces $X$ and $Y$ we denote by $C_b(X,Y)$ the space of continuous and bounded functions from $X$ to $Y$. If $Y$ is a vector space then $C_c(X,Y)$ is the space of continuous and compactly supported functions from $X$ to $Y$. If $Y = \R$ then we omit it and only write $C_b(X)$ and $C_c(X)$.  

\paragraph{Measure theory on Polish spaces}

On the questions of measure theory we refer the reader to the monographs \cite[Chapters 1 and 2]{AFP} and \cite[Chapter 5]{AGS}. 

We always take $X,Y$ Polish spaces, that is, topological separable spaces whose topology is generated by a complete metric. We endow them with their Borel $\sigma$-algebra. For a Polish space $X$ we denote by $\M(X)$ the space of finite Borel measures on it, by $\M_+(X)$ the space of finite non-negative measures, and by $\Prob(X)$ the space of probability measures. The space $\M(X)^{qd}$ for integers $q,d \geq 1$ will stand for the space of $q \times d$ matrix-valued measures. For $\sigma \in \M(X)$ or in $\M(X)^{qd}$, we denote by $|\sigma| \in \M_+(X)$ its total variation measure~\cite[Definition 1.4]{AFP}.


A sequence $(\sigma_n)_{n \in \N}$ of measures over $X$ is said to converge narrowly to $\sigma$ if for every continuous and bounded function $\varphi \in C_b(X)$, there holds
\begin{equation*}
\lim_{n \to + \infty} \int_X \varphi(x) \, \sigma_n(\ddr x) = \int_X \varphi(x) \, \sigma(\ddr x).  
\end{equation*}
Endowed with the topology of narrow convergence, the space $\M_+(X)$ is metrizable~\cite[Theorem 8.3.2]{bogachev}, in particular the topology is characterized by converging sequences.
A property that we will use repeatedly is the following, see~\cite[Equation (5.1.15)]{AGS} for the proof.  

\begin{lm}
\label{lm:lsc_implies_lsc_int}
If $\varphi : X \to [0, + \infty]$ is l.s.c. and non-negative then the function $\sigma \mapsto \int_X \varphi(x) \, \sigma(\ddr x)$
is l.s.c. for the topology of narrow convergence on $\M_+(X)$.
\end{lm}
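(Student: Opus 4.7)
The plan is to reduce the claim to the defining property of narrow convergence by approximating $\varphi$ from below by continuous bounded functions, and then invoke monotone convergence plus the standard fact that a supremum of continuous functions is l.s.c.

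First, I would fix a complete metric $d$ generating the topology of $X$ and, for each integer $k \geq 1$, define the Moreau–Yosida / inf-convolution approximation
\begin{equation*}
\varphi_k(x) = \inf_{y \in X} \bigl\{ \min(\varphi(y), k) + k \, d(x,y) \bigr\}.
\end{equation*}
A routine check shows that each $\varphi_k$ is non-negative, bounded by $k$, and $k$-Lipschitz, hence belongs to $C_b(X)$. The nontrivial point is to verify that $\varphi_k(x) \uparrow \varphi(x)$ pointwise as $k \to +\infty$: the sequence is non-decreasing in $k$ and bounded above by $\min(\varphi(x), k)$ (take $y = x$ in the infimum), while for the reverse inequality one uses the lower semi-continuity of $\varphi$ to argue that any sequence $y_k$ nearly attaining the infimum in $\varphi_k(x)$ must satisfy $d(x,y_k) \to 0$ as soon as $\liminf_k \varphi_k(x) < +\infty$, and then $\liminf_k \min(\varphi(y_k),k) \geq \varphi(x)$ by l.s.c. of $\varphi$.

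Next, for each fixed $k$, the map $\sigma \mapsto \int_X \varphi_k \, \mathrm{d}\sigma$ is continuous on $\M_+(X)$ endowed with narrow convergence, simply because $\varphi_k \in C_b(X)$. By monotone convergence applied to the non-decreasing sequence $\varphi_k$ on $(X, \sigma)$, we have for every $\sigma \in \M_+(X)$
\begin{equation*}
\int_X \varphi(x) \, \sigma(\mathrm{d} x) = \sup_{k \in \N} \int_X \varphi_k(x) \, \sigma(\mathrm{d} x).
\end{equation*}

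Finally, a pointwise supremum of a family of continuous functions is lower semi-continuous; applying this to the family $\sigma \mapsto \int_X \varphi_k \, \mathrm{d}\sigma$ concludes the proof. The only step that requires real care is the pointwise convergence $\varphi_k(x) \uparrow \varphi(x)$: this is where the lower semi-continuity hypothesis on $\varphi$ is essentially used, and where one must treat carefully the case $\varphi(x) = +\infty$ (then the infimum defining $\varphi_k(x)$ diverges to $+\infty$ because any near-minimizer $y_k$ must converge to $x$ while $\min(\varphi(y_k),k) \to +\infty$).
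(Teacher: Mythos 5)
Your proof is correct, and it is essentially the same argument as the one the paper points to (it cites \cite[Equation (5.1.15)]{AGS} instead of proving the lemma): approximate the l.s.c.\ function from below by the truncated Moreau--Yosida regularizations $\varphi_k \in C_b(X)$, use monotone convergence to write $\int \varphi \, \ddr\sigma$ as the supremum over $k$ of the narrowly continuous maps $\sigma \mapsto \int \varphi_k \, \ddr\sigma$, and conclude since a supremum of continuous functions is l.s.c. The delicate point you flag, namely $\varphi_k(x) \uparrow \varphi(x)$ including when $\varphi(x) = +\infty$, is handled correctly by your near-minimizer argument.
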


A subset $\mathcal{K}$ of $\Prob(X)$ is said \emph{tight} if for every $\varepsilon > 0$ we can find a compact set $K \subseteq Y$ such that $\mu(K) \geq 1 - \varepsilon$ for any $\mu \in \mathcal{K}$. 
Prokhorov's theorem states that a subset $\mathcal{K}$ of $\Prob(X)$ is tight if and only if it is relatively compact for the topology of narrow convergence. 

On the other hand, on the set $\M(X)$ we can also consider weak-$\star$ converging sequences.
A sequence $(\sigma_n)_{n \in \N}$ converges weak-$\star$ if for every continuous and compactly supported function $\varphi \in C_c(X)$, there holds
\begin{equation*}
\lim_{n \to + \infty} \int_X \varphi(x) \, \sigma_n(\ddr x) = \int_X \varphi(x) \, \sigma(\ddr x).  
\end{equation*}
Clearly, weak-$\star$ convergence is implied by narrow convergence, but the converse does not hod except if $X$ is compact. For $\mathcal{K}$ a subset of $\M(X)$ to be sequentially relatively compact for weak-$\star$ convergence, it is enough to have $|\sigma|(X)$ uniformly bounded for $\sigma \in \mathcal{K}$ \cite[Theorem 1.59]{AFP}. 

Given a measure $\mu$ on $X$ and a measurable map $T : X \to Y$, we call $T \# \mu$ the image measure of $\mu$ by $T$, defined by $(T \# \mu)(B) = \mu(T^{-1}(B))$ for any Borel set $B \subseteq Y$. 

\paragraph{The space $L^0(X,Y;m)$}

For $X,Y$ Polish spaces and $m \in \M_+(X)$ a finite non-negative measure on $X$, we define $L^0(X,Y;m)$ as the space of measurable maps from $X$ to $Y$ quotiented by the equivalence relation of coinciding $m$-a.e. We put on it the distance $d_{L^0(X,Y;m)} (u,v) = \int_X \min ( d_Y(u(x), v(x)), 1) \, m (\ddr x)$
being $d_Y$ a complete distance metrizing the topology of $Y$. Convergence of a sequence for the $d_{L^0(X,Y;m)}$ distance is equivalent to requiring that any subsequence has a subsequence which converges pointwise $m$-a.e. The space $L^0(X,Y;m)$ is Polish.

\paragraph{Measure-valued maps and disintegration theorem}

We fix $m \in \M_+(X)$ a non-negative finite measure on the space $X$. We consider measurable maps $\mubf$ from $X$ to $\Prob(Y)$, but most of the time we will see them as measures on the product space $X \times Y$. 

\begin{defi}
A measure $\mubf \in \M_+(X \times Y)$ is said to have first marginal $m$ if $\pi_1 \# \mubf = m$, being $\pi_1 : X \times Y \to X$ the projection on the first coordinate. We denote by $\Mm(X \times Y)$ the set of non-negative measures on $X \times Y$ whose first marginal is $m$ and we endow it with the topology of narrow convergence.  
\end{defi} 

The set $\Mm(X \times Y)$ is easily seen to be closed for the topology of weak convergence. The identification between measure-valued maps $X \to \Prob(Y)$ and measures on the product space $X \times Y$ comes from the disintegration theorem \cite[Theorem 2.28]{AFP}. 

\begin{theo}
\label{theo:disintegration}
A positive measure $\mubf$ on $X \times Y$ belongs to $\Mm(X \times Y)$ if and only if there exists a measurable family $(\mubf_x)_{x \in X}$ of probability distributions on $Y$ such that, for every measurable and bounded function $\varphi : X \times Y \to \R$, 
\begin{equation*}
\int_{X \times Y} \varphi(x,y) \, \mubf(\ddr x, \ddr y) = \int_X \left( \int_Y \varphi(x,y) \, \mubf_x(\ddr y) \right) \, m(\ddr x).
\end{equation*}
In such a case, the family $(\mubf_x)_{x \in X}$ is uniquely determined by $\mubf$ up to a $m$-negligible set. 
\end{theo}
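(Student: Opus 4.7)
The plan is to prove the two implications separately. The ``if'' direction is straightforward: testing the claimed identity against $\varphi(x,y) = \psi(x)$ for $\psi : X \to \R$ measurable and bounded yields $\int \psi \, \ddr \mubf = \int \psi \, \ddr m$, which is exactly $\pi_1 \# \mubf = m$.

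For the ``only if'' direction, I would first observe that for each Borel $B \subseteq Y$ the set function $A \mapsto \mubf(A \times B)$ is a finite Borel measure on $X$, dominated by $m$ since it is bounded above by $\mubf(A \times Y) = m(A)$. Radon--Nikodym then provides a measurable density $f_B : X \to [0,1]$. Next, I would choose a countable algebra $\A_0$ of Borel subsets of $Y$ generating the Borel $\sigma$-algebra, which exists because $Y$ is second countable. For each disjoint pair $B_1, B_2 \in \A_0$, the equality $f_{B_1 \cup B_2} = f_{B_1} + f_{B_2}$ holds $m$-a.e.; countability of $\A_0$ lets me pick a single $m$-null set $N$ outside of which all these identities hold simultaneously, together with $f_Y = 1$ and $f_\emptyset = 0$. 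For $x \notin N$, the map $B \mapsto f_B(x)$ is therefore a finitely additive probability on $\A_0$.

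The main obstacle is to upgrade this finitely additive probability into a genuine Borel probability $\mubf_x$ on $Y$; this is the step that truly uses the Polish structure of $Y$. My preferred route is to invoke the fact that every uncountable Polish space is Borel isomorphic to a Borel subset of $[0,1]$: transferring $f_{(\cdot)}(x)$ to $[0,1]$ and forming the associated cumulative distribution function yields a $\sigma$-additive probability, which then pulls back to $Y$. An alternative is to prove directly that $B \mapsto f_B(x)$ is inner regular on compacts for $m$-a.e.\ $x$, exploiting that Borel probabilities on Polish spaces are tight, and then apply Carathéodory's extension. Either way, measurability of $x \mapsto \mubf_x(B)$ for arbitrary Borel $B \subseteq Y$ follows by a monotone class argument initialized on $\A_0$.

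Once $(\mubf_x)_{x \in X}$ is constructed, the integral identity is verified first on indicators $\1_{A \times B}$ by the very definition of $f_B$, then extended by linearity to finite sums of rectangles, and finally to bounded measurable $\varphi : X \times Y \to \R$ by monotone convergence together with the fact that the Borel $\sigma$-algebra on $X \times Y$ coincides with the product of the Borel $\sigma$-algebras, a consequence of second countability. Uniqueness follows from the same countable algebra trick: if two families satisfy the identity, then for each $B \in \A_0$ the function $x \mapsto \mubf_x(B)$ is $m$-a.e.\ uniquely determined, so off a single $m$-null set the two families agree on all of $\A_0$ and hence on all Borel sets of $Y$.
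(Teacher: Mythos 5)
Your argument is correct in outline, but it is worth noting that the paper does not prove this statement at all: it is quoted as the classical disintegration theorem with a pointer to \cite[Theorem 2.28]{AFP}. What you give is the standard ``probabilistic'' construction of a regular conditional probability: Radon--Nikodym densities $f_B$ for a countable generating algebra, a single null set off which $B \mapsto f_B(x)$ is a finitely additive probability, and an upgrade to countable additivity using the Polish structure of $Y$. This differs from the route in the cited reference, which builds, for a countable dense family of continuous functions $h$, the densities of $\pi_1\#(h\,\mubf)$ with respect to $m$, checks linearity and positivity $m$-a.e., and invokes the Riesz representation theorem to produce $\mubf_x$; your approach is more elementary (no functional-analytic representation theorem) and meshes with a trick the paper already uses, namely the Borel isomorphism of $Y$ with a subset of $\R$ in the proof of Proposition~\ref{prop:Pimu_non_empty}. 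Two details you gloss over and should spell out if you write this up: after transferring to a Borel set $S \subseteq [0,1]$ and building the CDF-measure $\nu_x$, you must check that $\nu_x(S)=1$ for $m$-a.e.\ $x$ before pulling back to $Y$ (this follows by proving $\int_X \nu_x(E)\, m(\ddr x) = \mubf(X \times \phi^{-1}(E))$ for Borel $E \subseteq [0,1]$ via a Dynkin argument and taking $E = [0,1]\setminus S$); and the passage from finite unions of rectangles to all Borel sets of $X \times Y$ requires a monotone class (Dynkin) step, not merely monotone convergence, though with both sides finite measures in the set variable this is routine. Neither point is a genuine gap, only standard bookkeeping; the uniqueness argument via the countable algebra is exactly right.
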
 

\noindent We we call $(\mubf_x)_{x \in X}$ the disintegration of $\mubf$ with respect to the first coordinate. Theorem~\ref{theo:disintegration} can be reformulated as: the set $\Mm(X \times Y)$ coincides as a set (but not topologically) with the space $L^0(X,\Prob(Y);m)$.

\paragraph{Elements of convex analysis} 

We recall here some results of convex analysis which can be found in~\cite{EkelandTeman}. We consider the general case where we have two vector spaces $V,V'$ paired by a non-degenerate bilinear form $\langle \cdot, \cdot \rangle$. The space $V$ is endowed with the topology $\sigma(V,V')$, that is, the coarsest topology making the maps $v \in V \mapsto \langle v,w \rangle$ continuous for any fixed $w \in V'$. As an example, if $V$ is the space of finite signed measures while $V'$ is the space of continuous and bounded functions (resp. continuous and compactly supported functions), then $\sigma(V,V')$ coincides with the topology of narrow convergence (resp. weak-$\star$ convergence).

Given a function $F : V \to [0, + \infty]$ which is not identically $+ \infty$, we define its Legendre transform $F^*$ over $V'$ by, for $w \in V'$,
\begin{equation*}
F^* (w) = \sup_{v \in V} \; \langle v,w \rangle - F(v).  
\end{equation*} 
The Fenchel-Moreau duality \cite[Propositions 3.1 and 4.1]{EkelandTeman} implies that, by defining 
\begin{equation*}
F^{**}(v) = \sup_{w \in V'} \; \langle v,w \rangle - F^*(w),
\end{equation*}
then $F^{**}$ is the convex and l.s.c. envelope of $F$ (for the $\sigma(V,V')$ topology). In particular, if $F$ is convex, l.s.c. and not identically $+ \infty$, then as functions over $V$
\begin{equation}
\label{eq:FenchelMoreau}
F^{**} = F. 
\end{equation}

Eventually, as it will be useful, we call \emph{convex indicator} of a set $A \subseteq V$ the function defined over $V$ which takes the value $0$ on $A$ and $+ \infty$ outside of $A$. The usual indicator of the set $A$ taking the value $1$ on the set and $0$ otherwise is $\1_A$.   

\paragraph{Notations for Sobolev spaces}

As it will not be the focus of this article, we only recall the notations we use and point to textbooks for more details. When $\Omega$ is an open subset of $\R^d$ endowed with its Lebesgue measure $m$ while $k \geq 1$ is an integer and $p \in (1,+\infty)$, we denote by $W^{k,p}(\Omega, \R^q) \subseteq L^0(\Omega, \R^q;m)$ the set of maps having all derivatives up to order $k$ in $L^p$, see for instance~\cite{adams2003sobolev}. In the case $k=1$ and $p=1$, we rather consider the space $\mathrm{BV}(\Omega, \R^q)$ of functions of bounded variations, that is, whose first derivative is a finite (vector-valued) measure, see~\cite[Chapter 3]{AFP}.

For a map $u : \Omega \subseteq \R^d \to \R^q$, we denote by $\nabla u$ its Jacobian matrix, when it exists. It is a $q \times d$ matrix-valued field given as $(\partial_{x_j} u_i)_{1 \leq i \leq q, \, 1 \leq j \leq d}$ representing the differential of $u$ in the canonical basis of $\R^q$ and $\R^d$.

\section{The Lagrangian lifting, or the multimarginal optimal transport problem with an infinity of marginals}
\label{sec:generalMMOT}

We fix $Y$ a Polish space, which would be the one over which ``mass'' is transported. The space $X$ is the one indexing the marginals. It is also a Polish space, and is endowed with a finite measure $m \in \M_+(X)$.

As described above, the set of measure-valued maps is identified with $\Mm(X \times Y)$, and is endowed with the topology of narrow convergence. If $u \in L^0(X,Y;m)$, we define $\mubf_u \in \Mm(X \times Y)$ as the measure on $X \times Y$ defined as $(\Id,u) \# m$. It has first marginal $m$ and is characterized by: for every continuous and bounded function $\varphi : X \times Y \to \R$, 
\begin{equation*}
\int_{X \times Y} \varphi(x,y) \, \mubf_u(\ddr x, \ddr y) = \int_X \varphi(x,u(x)) \, m(\ddr x).
\end{equation*}
The disintegration of the measure $\mubf_u$ with respect to its first coordinate is given by $(\delta_{u(x)})_{x \in X}$: the notation $\mubf_u$ is consistent with the one we used in the introduction.

Given $E : L^0(X,Y;m) \to [0, + \infty]$, we define $\widetilde{\T}_E$ by the formula~\eqref{eq:defi_tilde_T}, that is, $\widetilde{\T}_E(\mubf_u) = E(u)$ for $u \in L^0(X,Y;m)$ and $+ \infty$ otherwise. In this section, we want to characterize the convex and l.s.c. envelope of $\widetilde{\T}_E$, that is, the largest convex and l.s.c. functional on $\Mm(X \times Y)$ below $\widetilde{\T}_E$.

\subsection{The case of functionals of order zero}

We start with a case where our whole new framework is too heavy: the case of functionals of order zero, that is, of functionals $E$ which can be written $E(u) = \int_{X \times Y} f(x,u(x)) \, m(\ddr x)$ for some non-negative and l.s.c. function $f$. Indeed, in this case, the lifting is not only convex, but linear. It is reminiscent of the classical lifting in optimalization which replaces the minimization of a real-valued function $g$ over $Y$ by the minimization of $\int_Y g(y) \, \mu(\ddr y)$ over all probability distributions $\mu$ over $Y$ in order to convexify the problem. In our case, the result reads as follows.    

\begin{prop}
\label{prop:envelope_linear}
Let $f: X \times Y \to [0, + \infty]$ a non-negative l.s.c. function. We define $E$ on $L^0(X,Y;m)$ by
\begin{equation*}
E(u) = \int_X f(x,u(x)) \, m(\ddr x).
\end{equation*}
Then the functional $\T_E$ defined on $\Mm(X \times Y)$ by
\begin{equation*}
\T_E(\mubf) = \int_{X \times Y} f(x,y) \, \mubf(\ddr x, \ddr y),
\end{equation*}
is the convex and l.s.c. envelope of $\widetilde{\T}_E$. It satisfies in addition $\T_E(\mubf_u) = \widetilde{\T}_E(\mubf_u) = E(u)$ for all $u \in L^0(X,Y;m)$.
\end{prop}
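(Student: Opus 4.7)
I would first verify that $\T_E : \Mm(X \times Y) \to [0, +\infty]$ is \emph{linear} in $\mubf$ (hence convex), and lower semi-continuous for the narrow topology thanks to Lemma~\ref{lm:lsc_implies_lsc_int} combined with the non-negativity and lower semi-continuity of $f$. The equality $\T_E(\mubf_u) = E(u)$ follows from $\mubf_u = (\Id, u) \# m$ and the change-of-variables formula:
\[ \T_E(\mubf_u) = \int_{X \times Y} f(x,y) \, \mubf_u(\ddr x, \ddr y) = \int_X f(x, u(x)) \, m(\ddr x) = E(u). \]
On any $\mubf$ that is not a graph measure, $\widetilde{\T}_E(\mubf) = +\infty$ trivially dominates $\T_E(\mubf)$. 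Thus $\T_E$ is already one convex, l.s.c.\ minorant of $\widetilde{\T}_E$, and in particular $\T_E \leq $ (convex l.s.c.\ envelope of $\widetilde{\T}_E$).

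\textbf{Paragraph 2: Reverse inequality via narrowly continuous affine minorants.} For the opposite inequality, rather than invoking Fenchel–Moreau abstractly, I would directly represent $\T_E$ as a pointwise supremum of narrowly continuous affine minorants of $\widetilde{\T}_E$. For any $\varphi \in C_b(X \times Y)$ with $0 \leq \varphi \leq f$ pointwise, the linear form
\[ L_\varphi(\mubf) := \int_{X \times Y} \varphi(x,y) \, \mubf(\ddr x, \ddr y) \]
is narrowly continuous on $\Mm(X \times Y)$ and affine. It lies below $\widetilde{\T}_E$: on a graph measure,
\[ L_\varphi(\mubf_u) = \int_X \varphi(x, u(x)) \, m(\ddr x) \leq \int_X f(x, u(x)) \, m(\ddr x) = E(u) = \widetilde{\T}_E(\mubf_u), \]
and $\widetilde{\T}_E$ is $+\infty$ on non-graph measures. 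Hence any convex l.s.c.\ minorant of $\widetilde{\T}_E$, and in particular the envelope, must dominate $L_\varphi$.

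\textbf{Paragraph 3: Approximation from below and conclusion.} It remains to produce a non-decreasing sequence $(\varphi_n)_n$ in $C_b(X \times Y)$ with $0 \leq \varphi_n \leq f$ and $\varphi_n \nearrow f$ pointwise. Such a sequence exists because $f$ is non-negative and l.s.c.\ on the Polish (in particular metrizable) space $X \times Y$; one may take, for instance, the Moreau–Yosida-type regularization $\varphi_n(z) = n \wedge \inf_{z'} \{ f(z') + n \, d(z, z') \}$, which is bounded and Lipschitz, dominated by $f$, and increases pointwise to $f$. Monotone convergence then yields
\[ \sup_n L_{\varphi_n}(\mubf) = \int_{X \times Y} f(x,y) \, \mubf(\ddr x, \ddr y) = \T_E(\mubf) \qquad \text{for every } \mubf \in \Mm(X \times Y), \]
so the envelope is $\geq \T_E$, and combined with Paragraph~1 this forces equality.

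\textbf{Expected main subtlety.} The argument is elementary precisely because $\T_E$ is linear in $\mubf$, so no duality or measurable-selection machinery is required and the ``envelope'' is described by its continuous affine minorants in a transparent way. The only non-trivial ingredient, beyond Lemma~\ref{lm:lsc_implies_lsc_int}, is the classical approximation of a non-negative l.s.c.\ function by an increasing sequence of bounded continuous ones, which is where the metrizability of $X \times Y$ enters. This order-zero case should be read as a warm-up: the genuine difficulty of the framework will only appear when $E$ involves derivatives, so that the lifting ceases to be linear in $\mubf$.
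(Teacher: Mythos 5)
There is a genuine gap: you prove the same inequality twice and never the hard one. The envelope is by definition the \emph{largest} convex l.s.c.\ functional below $\widetilde{\T}_E$. Your Paragraph~1 correctly shows that $\T_E$ is one such minorant, hence $\T_E \leq$ envelope. But Paragraphs~2--3 again establish envelope $\geq \sup_n L_{\varphi_n} = \T_E$, i.e.\ the very same inequality (and the sentence ``any convex l.s.c.\ minorant of $\widetilde{\T}_E$ must dominate $L_\varphi$'' is false as stated -- the zero functional is a minorant; only the envelope, being the largest minorant, dominates each $L_\varphi$). What is missing is the maximality of $\T_E$: one must show that \emph{every} convex l.s.c.\ $\T$ with $\T(\mubf_u) \leq E(u)$ for all $u \in L^0(X,Y;m)$ satisfies $\T(\mubf) \leq \int_{X \times Y} f \, \ddr \mubf$ for \emph{every} $\mubf \in \Mm(X \times Y)$, including measures whose disintegrations are not Dirac masses. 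Nothing in your argument controls the envelope from above at such $\mubf$. The issue is already visible for $f \equiv 0$: the claim then says that the narrowly closed convex hull of the graph measures $\{\mubf_u\}$ is all of $\Mm(X \times Y)$ (otherwise the convex indicator of that closed convex hull would be a strictly larger admissible minorant), and this is exactly what your proposal never proves.

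The paper closes precisely this gap by representing an arbitrary $\mubf$ as a mixture of graph measures: it constructs some $Q \in \Pi(\mubf)$, i.e.\ $\mubf = \int \mubf_u \, Q(\ddr u)$ (Proposition~\ref{prop:Pimu_non_empty}, via a Borel isomorphism of $Y$ with a subset of $\R$ and quantile maps -- a measurable-selection-type step), and then applies a Jensen inequality for convex, narrowly l.s.c.\ functionals (Proposition~\ref{prop:TE_upper_bound}, itself proved through the Fenchel--Moreau representation you wanted to avoid), giving $\T(\mubf) \leq \int E(u) \, Q(\ddr u)$; for this order-zero $E$ the marginal condition forces $\int E(u)\, Q(\ddr u) = \int_{X \times Y} f \, \ddr\mubf$, which is the desired bound. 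Some ingredient of this kind -- either the mixture representation plus Jensen, or an approximation of every $\mubf \in \Mm(X \times Y)$ by convex combinations of graph measures with matching cost -- is unavoidable, and your affine-minorant construction (which is fine as far as it goes, including the Moreau--Yosida approximation of $f$) does not supply it.
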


\begin{proof}
It is straightforward to prove that $\T_E$ is linear and l.s.c. (see Lemma~\ref{lm:lsc_implies_lsc_int}), and moreover that $\T_E(\mubf_u) = \widetilde{\T}_E(\mubf_u) = E(u)$ for all $u \in L^0(X,Y;m)$.

The proof of the maximality of $\T_E$ is more delicate and we rely on results proved below. We first claim that $\T_E$, introduced in Proposition~\ref{prop:envelope_linear} coincides with $\T_E$ introduced in~\eqref{def:general_OT} thanks to Proposition~\ref{prop:Pimu_non_empty}. Thus Proposition~\ref{prop:TE_upper_bound} shows that any $\T : \Mm(X \times Y) \to [0,+\infty]$ which is convex, l.s.c. and such that $\T \leq \widetilde{\T}_E$ satisfies $\T \leq \T_E$. 
\end{proof} 

\subsection{The multimarginal optimal transport problem with an infinity of marginals}

We now move on to the general case: we first define the multimarginal optimal transport problem with an infinity of marginals, and then show that it characterizes the convex and l.s.c. envelope.
As a safety check in this section, the reader can always take $X$ a finite set endowed with $m$ the counting measure, which corresponds ultimately to the case of multimarginal optimal transport with a finite number of marginals indexed by $X$, see Section~\ref{sec:MMOT} for more details.

We will consider transport plans $Q$ as probability measures on the space $L^0(X,Y;m)$. The first step is to understand what the marginals of a probability measure $Q$ are. 

%
%

\begin{defi}
\label{def:marginal_Q}
Let $Q \in \Prob(L^0(X,Y;m))$ a probability measure on functions, and $\mubf \in \Mm(X \times Y)$ a measure on the product space $X \times Y$ whose first marginal is $m$. We say that $Q \in \Pi(\mubf)$, to be read ``$Q$ has marginals $\mubf$'' if: for every continuous and bounded function $\varphi : X \times Y \to \R$, there holds
\begin{equation*}
\int_{X \times Y} \varphi(x,y) \, \mubf(\ddr x, \ddr y) =  \int_{L^0(X,Y;m)} \left( \int_X \varphi(x,u(x)) \, m(\ddr x) \right) Q(\ddr u).
\end{equation*}
\end{defi}

\noindent Standard arguments yield that the identity above can be extended to any measurable and bounded function $\varphi : X \times Y \to \R$. Note that the definition can be read as
\begin{equation}
\label{eq:def_marginal_Q}
\mubf = \int_{L^0(X,Y;m)} \mubf_u \, Q(\ddr u),  
\end{equation}
where the equality holds as elements of $\Mm(X \times Y)$, and the integral in the right hand side is interpreted a the intensity measure~\cite[Section 2.1]{kallenberg2017random} of the random measure $\mubf_u$ when $u$ is a random map with law $Q$.

\begin{example}
\label{ex:parametric_Q}
To give an intuition to the reader, and as we will use it throughout the course of this work, let us expand what happens if we try to write $Q$ in ``parametric'' form to reduce complexity. 

Consider a ``latent'' measurable space $Z$, and assume that we have a function $\Psi : X \times Z \to Y$ jointly measurable. For any $z \in Z$, we can look at $\Psi(\cdot, z) = u_z$: it yields a measurable function from $X$ to $Y$. The map $z \mapsto u_z \in L^0(X,Y;m)$ is measurable thanks to Fubini's theorem. Thus, for $\lambda \in \Prob(Z)$, we can define $Q = (z \mapsto u_z) \# \lambda$ as a probability measure on $L^0(X, Y;m)$. From a probabilistic point of view, $Q$ denotes the law of $u_z = \Psi(\cdot,z)$ if the ``latent variable'' $z$ follows the law $\lambda$. 

For $x \in X$, let $\mubf^Q_x = \Psi(x, \cdot) \# \lambda \in \Prob(Y)$. We claim that $Q \in \Pi(\mubf^Q)$ where $\mubf^Q \in \Mm(X \times Y)$ has disintegration with respect to the first coordinate given by $(\mubf_x^Q)_{x \in X}$. Indeed, for a continuous and bounded function $\varphi : X \times Y \to \R$ we have
\begin{align*}
\int_{L^0(X,Y;m)} \left( \int_X \varphi(x,u(x)) \, m(\ddr x) \right) \, Q(\ddr u) 
& = \int_Z \left( \int_X \varphi(x,\Psi(x,z)) \, m(\ddr x) \right) \, \lambda(\ddr z) \\
& = \int_{X} \left( \int_Z \varphi(x,\Psi(x,z)) \, \lambda(\ddr z) \right) \, m(\ddr x) \\
& = \int_X \left( \int_Y \varphi(x,y) \, \mubf^Q_x(\ddr y) \right) \, m(\ddr x),
\end{align*}
where we have used successively the definition of $Q$, Fubini's theorem, and the definition of $\mubf^Q_x$. In the last term we recognize the measure whose disintegration with respect to the first coordinate is $m$-a.e. equal to $(\mubf_x^Q)_{x \in X}$, see Theorem~\ref{theo:disintegration}. 
\end{example}

\begin{rmk}
\label{rmk:continuous_marginals}
An interesting case of the example above is when we take for $Z$ a set of functions. For instance if $Q$ is concentrated on $C(X,Y)$ the space of continuous functions then we can take $Z = C(X,Y)$ and $\Psi(x,u) = u(x)$, and put on $Z$ the measure $Q$ itself. With $e_x : u \mapsto u(x) \in Y$ which maps a function $u \in C(X,Y)$ to its evaluation at the point $x$, we have $\mubf^Q_x = e_x \# Q \in \Prob(Y)$. Thus for a $\mubf \in \Mm(X \times Y)$ with disintegration with respect to the first coordinate given by $(\mubf_x)_{x \in X}$, Example~\ref{ex:parametric_Q} yields
\begin{equation*}
Q \in \Pi(\mubf) \qquad \Leftrightarrow \qquad \text{for } m \text{-a.e. } x \in X, \; \mubf^Q_x = \mubf_x.
\end{equation*}
That is, the marginals of the measure $Q$ are given by the collection $\mubf^Q_x = e_x \# Q$ for $x \in X$.

The problem to extend this computation to an arbitrary $Q \in \Prob(L^0(X,Y;m))$ relies on the evaluation operator: indeed as the elements of $L^0(X,Y;m)$ are only defined $m$-a.e., $e_x : u \mapsto u(x)$ is a priori not well-defined, nor measurable, on $L^0(X,Y;m)$. That is why in this latter case we resort to Definition~\ref{def:marginal_Q}.
\end{rmk}

The set $\Pi(\mubf)$ is actually never empty in our setting, that is, for $X,Y$ Polish spaces. If $X$ were countable it would be enough to take the independent coupling. In the general case we reason as follows.  

\begin{prop}
\label{prop:Pimu_non_empty}
Let $X,Y$ be Polish spaces, $m \in \M_+(X)$ and $\mubf \in \Mm(X \times Y)$. Then $\Pi(\mubf)$ is not empty. 
\end{prop}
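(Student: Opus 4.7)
The plan is to exhibit an explicit $Q \in \Pi(\mubf)$ using the parametric construction of Example~\ref{ex:parametric_Q}. First I would apply Theorem~\ref{theo:disintegration} to obtain the disintegration $(\mubf_x)_{x \in X}$ of $\mubf$ with respect to its first coordinate. The idea is to produce a ``simultaneous parametrization'' of the family $(\mubf_x)_{x \in X}$ on a single latent probability space, and then push this forward to a law on $L^0(X,Y;m)$.

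Concretely, I would prove the following measurable parametrization lemma: there exists a jointly Borel measurable map $\Psi : X \times [0,1] \to Y$ such that $\Psi(x, \cdot) \# \lambda = \mubf_x$ for every $x \in X$, where $\lambda$ is the Lebesgue measure on $[0,1]$. The construction uses the Kuratowski isomorphism theorem to reduce to the case $Y$ is a Borel subset of $[0,1]$ (equipped with its usual order), after which one can set
\begin{equation*}
\Psi(x,t) = \inf\bigl\{ y \in [0,1] : F_x(y) \geq t \bigr\}, \qquad F_x(y) := \mubf_x\bigl([0,y]\bigr),
\end{equation*}
i.e.\ the quantile transform. Joint measurability follows from the identity $\{(x,t) : \Psi(x,t) \leq y\} = \{(x,t) : t \leq F_x(y)\}$ together with the measurability of $x \mapsto F_x(y)$ granted by the disintegration theorem.

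Once $\Psi$ is in hand, I would take $Z = [0,1]$, put $\lambda$ on it, and apply Example~\ref{ex:parametric_Q} directly: the map $z \in [0,1] \mapsto u_z := \Psi(\cdot,z) \in L^0(X,Y;m)$ is measurable by Fubini (its joint measurability on $X \times Z$ is exactly the lemma above), so we can define the probability measure $Q := (z \mapsto u_z) \# \lambda$ on $L^0(X,Y;m)$. By the example, $Q \in \Pi(\mubf^Q)$ with $\mubf^Q_x = \Psi(x,\cdot) \# \lambda = \mubf_x$, so $\mubf^Q = \mubf$ in $\Mm(X \times Y)$ by uniqueness of disintegrations, and therefore $Q \in \Pi(\mubf)$.

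The main obstacle is the joint measurability step: given only pointwise (or $m$-a.e.) disintegration, one needs a genuinely Borel map $\Psi$ on the product $X \times [0,1]$ in order to meaningfully push forward $\lambda$ to a measure on the space $L^0(X,Y;m)$. This is where the Polish hypothesis on $Y$ is used, through Kuratowski's isomorphism, to reduce to the one-dimensional quantile construction. Everything else (the pushforward definition, the identification $\mubf^Q = \mubf$) is formal once the parametrization lemma is established.
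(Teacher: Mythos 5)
Your proposal is correct and is essentially the paper's own argument: both reduce to the real line via a Borel isomorphism, parametrize the disintegration $(\mubf_x)_{x\in X}$ by the jointly measurable quantile transform of the Lebesgue measure on $[0,1]$, and push forward through $z \mapsto u_z$ to conclude via Example~\ref{ex:parametric_Q}. Your justification of joint measurability via the identity $\{\Psi(x,t) \leq y\} = \{t \leq F_x(y)\}$ is a slightly more explicit version of the paper's appeal to monotonicity and one-sided continuity, but the route is the same.
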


\begin{proof}
We can find a Borel isomorphism of $Y$ with a subset of the real line $\R$~\cite[Theorem 1.1]{kallenberg2017random}, and thus without loss of generality we consider the case $Y = \R$.  We write $(\mubf_x)_{x \in X}$ for the disintegration of the measure $\mubf$ with respect to the first coordinate. For $t \in \R$ and $x \in X$, we define the cumulative distribution function $F(x,t) = \mubf_x((-\infty,t]) \in [0,1]$. Then for each $x \in X$ we look at $\Psi(x,\cdot) = F^{[-1]}(x,\cdot)$ its generalized inverse, or quantile function, see \cite[Definition 2.1]{SantambrogioOTAM}. The function $\Psi$ is jointly measurable as can be seen by an easy adaptation of the arguments for the Carathéodory case~\cite[Lemma 4.51]{Aliprantis2006}, using here that $\Psi(x, \cdot)$ is non-decreasing and left continuous for any $x \in X$. Calling $\lambda$ the Lebesgue measure on $[0,1]$, there holds $\Psi(x,\cdot) \# \lambda = \mubf_x$ for $m$-a.e. $x \in X$ \cite[Proposition 2.2]{SantambrogioOTAM}. For $z \in [0,1]$, we can look at the function $u_z : x \mapsto \Psi(x,z)$ which belongs to $L^0(X,\R;m)$. We define $Q$ as $Q = (z \mapsto u_z) \# \lambda$, this defines a probability distribution over $L^0(X,\R;m)$. The computations of Example~\ref{ex:parametric_Q} guarantee that $Q \in \Pi(\mubf) $ as $\mubf^Q_x = \Psi(x,\cdot) \# \lambda = \mubf_x$ at least for $m$-a.e. $x \in X$. 
\end{proof}

We now have all the tools at our disposal to state the multimarginal optimal transport problem with an infinity of marginals. 

\begin{defi}
\label{definition:MMOT_infinity}
Let $E : L^0(X,Y;m) \to [0, + \infty]$ a non-negative measurable function and $\mubf \in \Mm(X \times Y)$. We consider
\begin{equation}
\label{def:general_OT}
\mathcal{T}_E(\mubf) = \inf_{Q } \left\{ \int_{L^0(X,Y;m)} E(u) \, Q(\ddr u)  \ : \ Q \in \Prob(L^0(X,Y;m)) \text{ such that } Q \in \Pi(\mubf) \right\}.
\end{equation} 
\end{defi} 

\noindent Abstractly, it can still be seen as an infinite dimensional linear program, with the minimization of the linear function $Q \mapsto \int E \, \ddr Q$ under the convex linear constraint $Q \in \Pi(\mubf)$. 

\begin{rmk}
Requiring $E$ to be defined over the whole $L^0(X,Y;m)$ is not a limitation as it is allowed to take the value $+ \infty$: indeed a functional defined on a subset of $L^0(X,Y;m)$ can always be extended to $L^0(X,Y;m)$ by setting to $+ \infty$ outside of its natural domain of definition. Actually this will be the case for most of the examples we will consider.  
\end{rmk}

\begin{example}
\label{ex:parametric_Q_E}
We continue Example~\ref{ex:parametric_Q} in the case $X=\Omega$ is a subset of $\R^d$ endowed with $m$ the Lebesgue measure and $Y = \R^q$, while we assume that $E$ is of the form $u \mapsto \int W(\nabla u)$. Then for $Q$ given as in Example~\ref{ex:parametric_Q} we have 
\begin{equation*}
\int_{L^0(\Omega, \R^q;m)} E(u) \, Q(\ddr u) = \int_Z \left( \int_\Omega W(\nabla_x \Psi(x,z)) \, \ddr x \right) \, \lambda(\ddr z).
\end{equation*} 
Of course with this method one only ``generates'' feasible $Q$'s with no guarantee of finding the optimal one.  
\end{example}

We now want to move to our main theorem which characterizes $\T_E$ as the convex and l.s.c. envelope of $\widetilde{\T}_E$. It will require an additional assumption to get tightness of (a subset of) $\Pi(\mubf)$. The idea is that this tightness comes from an interaction between the tightness of the marginals $\mubf$ but also the coercivity of the functional $E$. When $X$ is finite the tightness of the marginals is enough, but not in the general case. We phrase the general condition as the following assumption.

\begin{asmp}
\label{asmp:reg_XYm_E}
The functional $E : L^0(X,Y;m) \to [0, + \infty]$ is l.s.c. and is such that: for every function $\psi : Y \to [0, + \infty)$ with compact sublevel sets, the function
\begin{equation*}
u \mapsto E(u) + \int_X \psi(u(x)) \, m(\ddr x)
\end{equation*}
has relatively compact sublevel sets for the $L^0(X,Y;m)$ topology.
\end{asmp}

We refer to Appendix~\ref{sec:appendix_assmp_A} for sufficient conditions for Assumption~\ref{asmp:reg_XYm_E} to hold, in particular for functionals of the form~\eqref{eq:intro_energy_generic}. It relies on the standard techniques to prove existence of a minimizer in calculus of variations. Under this assumption we now state our main result.  

\begin{theo}
\label{theo:multimarginalOT_lifting}
Let $X,Y$ be Polish spaces, $m \in \M_+(X)$ a finite non-negative measure on $X$ and a functional $E : L^0(X,Y;m) \to [0, + \infty]$ which satisfies Assumption~\ref{asmp:reg_XYm_E}. 

Then the functional $\mathcal{T}_E$ is the convex and lower semi-continuous envelope of $\widetilde{\T}_E$, and it satisfies $\mathcal{T}_E(\mubf_u) = \widetilde{\T}_E(\mubf_u) = E(u)$ for all $u \in L^0(X,Y;m)$. In addition, for any $\mubf \in \Mm(X \times Y)$ we have the duality formula
\begin{multline}
\label{eq:duality_MMOT}
\T_E(\mubf) = 
\sup_{\varphi} \Bigg\{ \int_{X \times Y} \varphi(x,y) \, \mubf(\ddr x, \ddr y) \ : \ \varphi \in C_b(X \times Y) \\
 \text{ and } \forall u \in L^0(X,Y;m), \; \int_{X} \varphi(x,u(x)) \, m (\ddr x) \leq E(u)  \Bigg\}.
\end{multline}
\end{theo}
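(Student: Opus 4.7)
The plan is to establish five points in succession: (a) the agreement $\T_E(\mubf_u) = E(u)$, (b) convexity of $\T_E$, (c) lower semi-continuity for narrow convergence on $\Mm(X \times Y)$, (d) that $\T_E$ dominates every convex l.s.c.\ functional below $\widetilde{\T}_E$, and (e) the duality formula, as a consequence of (a)--(d) via Fenchel--Moreau. Points (a) and (b) are essentially formal. For (a), the choice $Q = \delta_u$ is admissible in~\eqref{def:general_OT} and gives $\T_E(\mubf_u) \leq E(u)$; conversely, if $Q \in \Pi(\mubf_u)$, uniqueness of the disintegration forces $\int \delta_{v(x)} \, Q(\ddr v) = \delta_{u(x)}$ for $m$-a.e.\ $x$, and a Fubini argument applied to the bounded function $(x,v) \mapsto \min(d_Y(v(x), u(x)), 1)$ shows that $v = u$ in $L^0(X,Y;m)$ for $Q$-a.e.\ $v$, i.e.\ $Q = \delta_u$, yielding the matching lower bound. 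Convexity (b) is immediate because the constraint $Q \in \Pi(\mubf)$ is jointly affine in $(Q, \mubf)$ while the objective is linear in $Q$, so a convex combination of admissible competitors remains admissible for the corresponding combination of marginals.

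The lower semi-continuity (c) is where Assumption~\ref{asmp:reg_XYm_E} enters, and I expect this to be the main obstacle. Given $\mubf^n \to \mubf$ narrowly with $M := \liminf_n \T_E(\mubf^n) < +\infty$, select near-optimal $Q^n \in \Pi(\mubf^n)$. The sequence of second marginals $\{\pi_2 \# \mubf^n\}_n$ is tight on $Y$ by Prokhorov, and a standard de la Vall\'ee--Poussin style construction yields $\psi : Y \to [0,+\infty)$ with compact sublevel sets and $\sup_n \int_Y \psi \, \ddr(\pi_2 \# \mubf^n) < +\infty$. Using the marginal identity of Definition~\ref{def:marginal_Q} (extended from $C_b$ to the non-negative l.s.c.\ test $\psi$ by monotone approximation), this rewrites as a uniform bound on $\int \bigl(E(v) + \int_X \psi(v(x))\, m(\ddr x) \bigr) \, Q^n(\ddr v)$. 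Assumption~\ref{asmp:reg_XYm_E} combined with Markov's inequality then delivers tightness of $\{Q^n\}$ in $\Prob(L^0(X,Y;m))$. After extracting a narrowly convergent subsequence $Q^n \to Q$, I verify $Q \in \Pi(\mubf)$ by passing to the limit in Definition~\ref{def:marginal_Q}, using that the test map $v \mapsto \int_X \varphi(x,v(x)) \, m(\ddr x)$ is bounded and continuous on $L^0(X,Y;m)$ (by dominated convergence along subsequences). Lemma~\ref{lm:lsc_implies_lsc_int} and the l.s.c.\ of $E$ finally provide $\T_E(\mubf) \leq \int E \, \ddr Q \leq M$.

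For (d), fix a proper convex l.s.c.\ functional $\T' \leq \widetilde{\T}_E$ on $\Mm(X \times Y)$ (the improper case is trivial). By Hahn--Banach applied in the duality $(\M(X \times Y), C_b(X \times Y))$, write $\T'(\mubf) = \sup_i \bigl[\langle \varphi_i, \mubf \rangle - c_i\bigr]$ with $\varphi_i \in C_b(X \times Y)$ and $c_i \in \R$. Evaluating each affine minorant at $\mubf_u$ and using $\T' \leq \widetilde{\T}_E$ yields $\int_X \varphi_i(x, u(x))\, m(\ddr x) \leq E(u) + c_i$ for every $u \in L^0(X,Y;m)$; integrating this pointwise inequality against $Q(\ddr v)$ for any $Q \in \Pi(\mubf)$ and pulling the supremum in $i$ inside the integral gives $\T'(\mubf) \leq \int E(v) \, Q(\ddr v)$, and optimizing in $Q$ produces $\T' \leq \T_E$. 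Finally, (e) follows from Fenchel--Moreau: the constant shift $\varphi \mapsto \varphi - \widetilde{\T}_E^*(\varphi)/m(X)$, allowed thanks to $\mubf(X \times Y) = m(X)$, identifies the right-hand side of~\eqref{eq:duality_MMOT} with the biconjugate $\widetilde{\T}_E^{**}$, i.e.\ with the convex l.s.c.\ envelope of $\widetilde{\T}_E$, which by (a)--(d) equals $\T_E$.
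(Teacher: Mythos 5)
Your proposal is correct, and for the envelope part it follows essentially the same route as the paper: points (a)--(d) correspond to Lemma~\ref{lm:basic_energy_TE}, Lemma~\ref{lm:tightness}, Proposition~\ref{prop:lsc_energy_TE} and Proposition~\ref{prop:TE_upper_bound}, with the same key ingredients (tightness of the second marginals plus a coercive $\psi$ as in \cite[Remark 5.1.5]{AGS}, Assumption~\ref{asmp:reg_XYm_E} and Prokhorov for the couplings, continuity of $v \mapsto \int_X \varphi(x,v(x))\, m(\ddr x)$ to pass the marginal constraint to the limit, and a Jensen-type argument via affine minorants for maximality). Two steps genuinely differ. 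In (a) you identify $\Pi(\mubf_u)=\{\delta_u\}$ by testing the marginal identity with $(x,y)\mapsto \min(d_Y(y,u(x)),1)$, which yields $\int d_{L^0}(v,u)\, Q(\ddr v)=0$; this is a clean alternative to the paper's extremality argument (Dirac masses being extreme points of $\Prob(Y)$), and it avoids the ill-defined evaluation map since $v\mapsto d_{L^0}(v,u)$ is continuous on $L^0$ — note, though, that your intermediate claim about the disintegration is unnecessary and would reintroduce that evaluation issue, so better to run the Fubini step directly from Definition~\ref{def:marginal_Q}. In (e) you derive~\eqref{eq:duality_MMOT} by computing the biconjugate of $\widetilde{\T}_E$ in the pairing $(\M(X\times Y), C_b(X\times Y))$ and converting the penalized form into the constrained form by the constant shift $\varphi \mapsto \varphi - \widetilde{\T}_E^*(\varphi)/m(X)$, exploiting that every element of $\Mm(X\times Y)$ has total mass $m(X)$; the paper instead extends $\T_E$ by $1$-homogeneity so that its conjugate becomes the convex indicator of the admissible class $\mathcal{C}$. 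Both tricks are valid and of comparable length; yours avoids the homogeneous extension but should state explicitly that $\widetilde{\T}_E$ (and, in (d), $\T'$) is extended by $+\infty$ outside $\Mm(X\times Y)$, which preserves convexity and narrow lower semi-continuity because $\Mm(X\times Y)$ is convex and narrowly closed, so that Fenchel--Moreau applies on $\M(X\times Y)$ and the biconjugate restricted to $\Mm(X\times Y)$ is indeed the envelope characterized in (a)--(d); the degenerate case $E\equiv+\infty$ (where $\widetilde{\T}_E^*\equiv-\infty$ and the shift is ill-defined) should be checked separately, both sides of~\eqref{eq:duality_MMOT} then being $+\infty$.
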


\begin{rmk}
It is easy to check that, under the setting of Proposition~\ref{prop:envelope_linear}, the functional $\T_E$ introduced in Proposition~\ref{prop:envelope_linear} coincides with $\T_E$ introduced in~\eqref{def:general_OT}. In particular it shows that Assumption~\ref{asmp:reg_XYm_E} is not a necessary condition for $\T_E$ to coincide with the convex and l.s.c. envelope of $\widetilde{\T}_E$.
\end{rmk}

The rest of this section is dedicated to the proof of this theorem, which
will be broken down in intermediate results interesting in their own right. Let us first collect the easy results which hold even if Assumption~\ref{asmp:reg_XYm_E} does not hold.

\begin{lm}
\label{lm:basic_energy_TE}
The function $\mubf \mapsto \mathcal{T}_E(\mubf)$ is convex over $\Mm(X \times Y)$, and for every $u \in L^0(X,Y;m)$ it satisfies $\mathcal{T}_E(\mubf_u) = E(u)$.
\end{lm}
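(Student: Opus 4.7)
The two claims split naturally, and I would handle the identity $\T_E(\mubf_u)=E(u)$ first since the convexity is essentially linearity of the marginal constraint.

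For the identity, the inequality $\T_E(\mubf_u) \leq E(u)$ is immediate by plugging in the candidate $Q = \delta_u$ (a Dirac mass on $L^0(X,Y;m)$ at the point $u$): indeed, for any continuous bounded $\varphi$,
\begin{equation*}
\int_{L^0(X,Y;m)} \left( \int_X \varphi(x,v(x)) \, m(\ddr x) \right) \delta_u(\ddr v) = \int_X \varphi(x,u(x)) \, m(\ddr x) = \int_{X \times Y} \varphi \, \ddr \mubf_u,
\end{equation*}
so $\delta_u \in \Pi(\mubf_u)$, and $\int E(v) \, \delta_u(\ddr v) = E(u)$.

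The reverse inequality is the delicate point: I must show any $Q \in \Pi(\mubf_u)$ is actually concentrated on $\{u\}$. The natural test function is $\varphi(x,y) = \min(d_Y(y, u(x)), 1)$, where $d_Y$ is the complete distance metrizing $Y$ used to define $d_{L^0(X,Y;m)}$. This $\varphi$ is measurable and bounded (but in general not continuous because $u$ is only measurable), so I would invoke the extension of Definition~\ref{def:marginal_Q} to measurable bounded functions mentioned just after its statement. Applied to $\mubf_u$ whose disintegration is $(\delta_{u(x)})_{x \in X}$, the left-hand side equals $0$, while the right-hand side rewrites via Fubini as $\int_{L^0(X,Y;m)} d_{L^0(X,Y;m)}(v,u) \, Q(\ddr v)$. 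Hence this integral vanishes, forcing $Q = \delta_u$, and therefore $\int E(v) \, Q(\ddr v) = E(u)$. Taking the infimum over $Q$ gives $\T_E(\mubf_u) \geq E(u)$.

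For convexity, the key observation is that both the objective $Q \mapsto \int E \, \ddr Q$ and the marginal constraint in Definition~\ref{def:marginal_Q} are linear in $Q$ and in $\mubf$. Concretely, given $\mubf_0, \mubf_1 \in \Mm(X \times Y)$, $t \in [0,1]$, and $Q_i \in \Pi(\mubf_i)$, the convex combination $Q_t := (1-t) Q_0 + t Q_1$ is a probability measure on $L^0(X,Y;m)$ and a direct insertion in Definition~\ref{def:marginal_Q} shows $Q_t \in \Pi((1-t)\mubf_0 + t \mubf_1)$. Therefore
\begin{equation*}
\T_E((1-t) \mubf_0 + t \mubf_1) \leq \int E \, \ddr Q_t = (1-t) \int E \, \ddr Q_0 + t \int E \, \ddr Q_1,
\end{equation*}
and infimizing over $Q_0, Q_1$ yields convexity. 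The only potentially subtle point here is ensuring $\Pi(\mubf_i) \neq \emptyset$, which is guaranteed by Proposition~\ref{prop:Pimu_non_empty}; if one of them is empty then the corresponding $\T_E(\mubf_i) = +\infty$ and the inequality holds trivially. I do not expect any serious obstacle beyond the small care needed to apply the marginal identity to the non-continuous test function $\min(d_Y(\cdot,u(\cdot)),1)$.
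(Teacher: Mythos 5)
Your proof is correct, and the convexity part coincides with the paper's argument (convex combinations of couplings, plus the observation that empty $\Pi(\mubf_i)$ makes the inequality trivial). Where you genuinely diverge is in the identification $\Pi(\mubf_u)=\{\delta_u\}$, which is the heart of the identity $\T_E(\mubf_u)=E(u)$. The paper reads the marginal condition as the barycenter formula~\eqref{eq:def_marginal_Q} and argues softly: disintegrating, $\delta_{u(x)}$ is an extreme point of $\Prob(Y)$ for $m$-a.e.\ $x$, so $\mubf_u$ is an extreme point of the set of measures with first marginal $m$, and hence no nontrivial mixture can represent it. You instead test the marginal identity of Definition~\ref{def:marginal_Q} against the bounded \emph{measurable} function $\varphi(x,y)=\min(d_Y(y,u(x)),1)$, obtaining $\int_{L^0(X,Y;m)} d_{L^0(X,Y;m)}(v,u)\, Q(\ddr v)=0$ and thus $Q=\delta_u$ in one line (the rewriting of the inner integral is just the definition of $d_{L^0(X,Y;m)}$, no Fubini needed). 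The trade-off: your route leans on the extension of the marginal identity from $C_b(X\times Y)$ to bounded Borel $\varphi$, which the paper asserts after Definition~\ref{def:marginal_Q} but does not prove (a monotone class argument, including measurability of $v\mapsto\int_X\varphi(x,v(x))\,m(\ddr x)$), whereas the paper's route avoids any measurable test functions but leaves implicit the standard passage from ``extreme point'' to ``every representing measure $Q$ is a Dirac''. Your version is arguably more quantitative and self-contained modulo that asserted extension; both are sound, and you also make the upper bound $\T_E(\mubf_u)\le E(u)$ explicit via $Q=\delta_u$, which the paper subsumes in the equality $\Pi(\mubf_u)=\{\delta_u\}$.
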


\begin{proof}
Convexity if straightforward: let $\mubf_1, \mubf_2 \in \Mm(X \times Y)$ and take also $\lambda \in [0,1]$. For any $Q_1, Q_2$ in respectively $\Pi(\mubf_1), \Pi(\mubf_2)$ (which are never empty), it is clear that $(1-\lambda) Q_1 + \lambda Q_2 \in \Pi((1-\lambda) \mubf_1 + \lambda \mubf_2)$. Thus, by linearity of the integral,
\begin{align*}
\mathcal{T}_E((1-\lambda) \mubf_1 + \lambda \mubf_2) & \leq \int_{L^0(X,Y;m)} E(u) \, [(1-\lambda) Q_1 + \lambda Q_2](\ddr u) \\
& = (1-\lambda) \int_{L^0(X,Y;m)} E(u) \, Q_1(\ddr u) + \lambda \int_{L^0(X,Y;m)} E(u) \, Q_2(\ddr u). 
\end{align*}
Taking the infimum in $Q_1, Q_2$ yields convexity. 

Then, assume that $\mubf = \mubf_u$. In this case we claim that $\Pi(\mubf_u) = \{ \delta_u \}$ and this yields $\mathcal{T}_E(\mubf_u) = E(u)$. To prove the claim, following the formulation~\eqref{eq:def_marginal_Q}, it is enough to notice that the measure $\mubf_u$ is an extreme point of the set of measures with first marginal $m$. To prove this extremality property, note that if $\mubf_u = (\mubf_1 + \mubf_2)/2$
for $\mubf_1, \mubf_2 \in \Mm(X \times Y)$, then by disintegration for $m$-a.e. $x \in X$, we have $\delta_{u(x)} =  (\mubf_{1,x}  +  \mubf_{2,x}) /2$,
being $(\mubf_{1,x})_{x \in X}$ and $(\mubf_{2,x})_{x \in X}$ the disintegrations of respectively $\mubf_1$ and $\mubf_2$ with respect to the first variable. As Dirac masses are extreme points of the set of probability distributions, $\mubf_{1,x} = \mubf_{2,x} = \delta_{u(x)}$ for $m$-a.e. $x \in X$, thus $\mubf_1 = \mubf_2 = \mubf_u$ and the conclusion follows. 
\end{proof}

Let us then state the key technical lemma which justifies the introduction of Assumption~\ref{asmp:reg_XYm_E}.

\begin{lm}
\label{lm:tightness}
Assume that Assumption~\ref{asmp:reg_XYm_E} holds. Let $\mathcal{K}$ be a tight set of $\Mm(X \times Y)$ and $C \in \R$. Then the set
\begin{equation*}
\Pi_{\leq C}(\mathcal{K}) := \left\{ Q \in \Prob(L^0(X,Y;m)) \ : \ Q \in \Pi(\mubf) \text{ for some } \mubf \in \mathcal{K} \text{ and } \int_{L^0(X,Y;m)} E(u) \, Q(\ddr u) \leq C  \right\}
\end{equation*} 
is tight for the topology of $L^0(X,Y;m)$ convergence. 
\end{lm}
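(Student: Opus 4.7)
The plan is to apply Prokhorov's theorem to $\Pi_{\leq C}(\mathcal{K}) \subseteq \Prob(L^0(X,Y;m))$: it suffices to produce, for each $\varepsilon > 0$, a compact set $\mathcal{C}_\varepsilon \subset L^0(X,Y;m)$ with $Q(\mathcal{C}_\varepsilon) \geq 1 - \varepsilon$ uniformly in $Q \in \Pi_{\leq C}(\mathcal{K})$. The candidates will be sublevel sets of
\begin{equation*}
F(u) := E(u) + \int_X \psi(u(x)) \, m(\ddr x)
\end{equation*}
for a well-chosen $\psi : Y \to [0, + \infty)$ with compact sublevel sets. Assumption~\ref{asmp:reg_XYm_E} then makes $\{F \leq L\}$ relatively compact in $L^0(X,Y;m)$; once I check that $F$ is l.s.c., this set is closed, hence compact. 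A Markov inequality will then convert a uniform $L^1(Q)$-bound on $F$ into the required tightness.

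The crux is picking $\psi$ adapted to $\mathcal{K}$. Since $\mathcal{K}$ is tight in $\Mm(X \times Y)$, the family $\{\pi_2 \# \mubf : \mubf \in \mathcal{K}\}$ of second marginals is tight in $\M_+(Y)$; I choose increasing compacts $K_n \subseteq Y$ such that $[\pi_2 \# \mubf](Y \setminus K_n) \leq 4^{-n}$ uniformly over $\mubf \in \mathcal{K}$, and set $\psi := \sum_n 2^n \1_{Y \setminus K_n}$. This $\psi$ is l.s.c.\ (a sum of indicators of open sets), and $\{\psi \leq L\} = \bigcap_{n : 2^n > L} K_n$ is closed in some $K_{n_0}$, hence compact. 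Since every $\mubf \in \mathcal{K}$ concentrates on the $\sigma$-compact set $X \times \bigcup_n K_n$ (the complement has $\mubf$-measure zero), I may reduce to working on $Y = \bigcup_n K_n$ so that $\psi$ takes values in $[0, + \infty)$; and
\begin{equation*}
M := \sup_{\mubf \in \mathcal{K}} \int_Y \psi(y) \, [\pi_2 \# \mubf](\ddr y) \leq \sum_n 2^n \cdot 4^{-n} \cdot m(X) < + \infty.
\end{equation*}

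With $\psi$ fixed, for any $Q \in \Pi(\mubf)$ with $\mubf \in \mathcal{K}$ I extend the defining identity of $\Pi(\mubf)$ in Definition~\ref{def:marginal_Q} from continuous bounded test functions to the non-negative l.s.c.\ function $(x,y) \mapsto \psi(y)$ via monotone approximation from below: write $\psi = \sup_k \varphi_k$ for a non-decreasing sequence $\varphi_k \in C_b(X \times Y)$, apply the identity to each $\varphi_k$, and pass to the limit on both sides by monotone convergence. This yields
\begin{equation*}
\int_{L^0(X,Y;m)} \left( \int_X \psi(u(x)) \, m(\ddr x) \right) Q(\ddr u) = \int_{X \times Y} \psi(y) \, \mubf(\ddr x, \ddr y) \leq M,
\end{equation*}
and combined with $\int E \, \ddr Q \leq C$ gives $\int F \, \ddr Q \leq C + M$ uniformly over $Q \in \Pi_{\leq C}(\mathcal{K})$. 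Markov's inequality then yields $Q\bigl( L^0(X,Y;m) \setminus \{F \leq L\} \bigr) \leq (C + M)/L$, and choosing $L := (C + M)/\varepsilon$ concludes.

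The main delicate point I foresee is the construction of $\psi$: it must simultaneously have compact sublevel sets in $Y$, be uniformly integrable against $\mathcal{K}$, and be real-valued — the last requiring the reduction to the $\sigma$-compact support of the tight family. Extending the $\Pi(\mubf)$-identity to unbounded l.s.c.\ test functions is the next careful step, handled by monotone approximation. Everything else is routine: l.s.c.\ of $u \mapsto \int_X \psi(u(x)) \, m(\ddr x)$ on $L^0(X,Y;m)$ follows from extracting a $m$-a.e.\ convergent subsequence from any $d_{L^0}$-convergent sequence and combining Fatou with l.s.c.\ of $\psi$; closedness of $\{F \leq L\}$ follows since $F$ is a sum of l.s.c.\ terms; and Prokhorov closes the argument.
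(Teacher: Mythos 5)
Your overall strategy is the same as the paper's: choose a function $\psi$ with compact sublevel sets adapted to $\mathcal{K}$, note that Assumption~\ref{asmp:reg_XYm_E} together with lower semi-continuity makes the sublevel sets of $F(u)=E(u)+\int_X\psi(u(x))\,m(\ddr x)$ compact in $L^0(X,Y;m)$, push the uniform bound through the marginal condition, and conclude by the integral tightness criterion. The paper compresses both the construction of $\psi$ and the final criterion into citations of [AGS, Remark 5.1.5]; you unpack both directions by hand, and those parts (monotone approximation of the l.s.c.\ test function in Definition~\ref{def:marginal_Q}, Fatou for the lower semi-continuity of $u\mapsto\int_X\psi(u)\,\ddr m$, Markov plus Prokhorov) are fine.

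The step that does not work as written is the ``reduction to $Y=\bigcup_n K_n$''. Your $\psi=\sum_n 2^n\1_{Y\setminus K_n}$ equals $+\infty$ off $\bigcup_n K_n$, whereas Assumption~\ref{asmp:reg_XYm_E} quantifies over \emph{finite-valued} functions $\psi:Y\to[0,+\infty)$ on the original space $Y$, and the compactness you need is that of $\{F\le L\}$ inside $L^0(X,Y;m)$. Replacing $Y$ by the $\sigma$-compact set $\bigcup_n K_n$ changes the target (which need not be closed in $Y$, nor Polish), changes $L^0$, and is not licensed by the hypothesis; the fact that every $\mubf\in\mathcal{K}$ and $Q$-a.e.\ $u$ are concentrated on $\bigcup_n K_n$ does not by itself yield compactness of the sublevel sets of your $F$ in $L^0(X,Y;m)$. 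The repair stays entirely within your construction: Assumption~\ref{asmp:reg_XYm_E} has content only when $Y$ carries some finite-valued function with compact sublevel sets, i.e.\ only when $Y$ is $\sigma$-compact; so fix a compact exhaustion $C_1\subseteq C_2\subseteq\cdots$ of $Y$ and replace $K_n$ by $K_n\cup C_n$. Enlarging the compacts preserves the uniform bound $[\pi_2\#\mubf]\bigl(Y\setminus(K_n\cup C_n)\bigr)\le 4^{-n}$, and the new $\psi:=\sum_n 2^n\1_{Y\setminus(K_n\cup C_n)}$ is finite everywhere (each $y$ lies in $K_n\cup C_n$ for all large $n$), still l.s.c.\ with compact sublevel sets, so Assumption~\ref{asmp:reg_XYm_E} applies verbatim and the rest of your argument is unchanged. (To be fair, the paper glosses the same point: it invokes [AGS, Remark 5.1.5], where the function is only $[0,+\infty]$-valued. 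Also, the factor $m(X)$ in your bound for $M$ is superfluous, since the bound $4^{-n}$ already accounts for the total mass.)
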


\begin{proof}
Note that Assumption~\ref{asmp:reg_XYm_E} implies that the sublevel sets of $u \mapsto E(u) + \int_X \psi(u) \, \ddr m$ are not only relatively compact, but also compact, as the function $\psi$ is l.s.c. (having compact thus closed sublevel sets) and thus $u \mapsto \int_X \psi(u) \, \ddr m$ is l.s.c. for the convergence in $L^0(X,Y;m)$ by Fatou's lemma.

We then proceed to prove the lemma. 
As $\mathcal{K}$ is tight, so is its projection onto $Y$. By~\cite[Remark 5.1.5]{AGS} we can find $\psi : Y \to [0, + \infty)$ a function with compact sublevel set such that 
\begin{equation*}
\sup_{\mubf \in \mathcal{K}} \int \psi(y) \, \mubf(\ddr x, \ddr y) < + \infty.
\end{equation*}
Using the definition of $Q$ having given marginals we deduce 
\begin{equation*}
\sup_{Q \in \Pi_{\leq C}(\mathcal{K})} \left\{ \int_{L^0(X,Y;m)} \left( E(u) + \int_X \psi(u(x)) \, m(\ddr x) \right) \, Q(\ddr u) \right\} \leq C + \sup_{\mubf \in \mathcal{K}} \int \psi(y) \, \mubf(\ddr x, \ddr y) < + \infty.
\end{equation*}
Following~\cite[Remark 5.1.5]{AGS}, Assumption~\ref{asmp:reg_XYm_E} is enough to conclude to the tightness of $\Pi_{\leq C}(\mathcal{K})$. 
\end{proof}

With the help of this lemma we can prove existence of an optimal coupling $Q$ and the lower semi-continuity of $\T_E$. 

\begin{prop}
\label{prop:lsc_energy_TE}
Assume that Assumption~\ref{asmp:reg_XYm_E} holds. If $\mubf \in \Mm(X \times Y)$ is such that $\mathcal{T}_E(\mubf) < + \infty$, then the infimum in~\eqref{def:general_OT} is attained. Moreover, the function $\mathcal{T}_E$ is lower semi-continuous for the topology of narrow convergence on $\Mm(X \times Y)$.
\end{prop}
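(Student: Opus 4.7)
The plan is to prove the two assertions by the direct method of the calculus of variations, with Lemma~\ref{lm:tightness} (or rather an easy application of it) as the workhorse. The key observations are: (i) any singleton or narrowly convergent sequence in $\Mm(X \times Y)$ is tight; (ii) for every $\varphi \in C_b(X \times Y)$, the map $J_\varphi : u \mapsto \int_X \varphi(x,u(x)) \, m(\ddr x)$ is continuous and bounded on $L^0(X,Y;m)$ (boundedness is immediate; continuity follows because convergence in $L^0$ passes to a subsequence converging $m$-a.e., and then one applies dominated convergence together with a standard ``every subsequence has a convergent subsequence'' argument to upgrade to convergence of the whole sequence).

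\textbf{Existence of an optimal $Q$.} Assume $\T_E(\mubf) < +\infty$ and pick a minimizing sequence $Q_n \in \Pi(\mubf)$ with $\int E \, \ddr Q_n \leq \T_E(\mubf) + 1$ for $n$ large. Apply Lemma~\ref{lm:tightness} with $\mathcal{K} = \{\mubf\}$ and $C = \T_E(\mubf) + 1$: the sequence $(Q_n)$ is tight in $\Prob(L^0(X,Y;m))$, hence by Prokhorov admits a narrowly convergent subsequence $Q_n \to Q$. Passing to the limit in the marginal identity of Definition~\ref{def:marginal_Q} for a test function $\varphi \in C_b(X \times Y)$ — the left-hand side is constant in $n$, the right-hand side is $\int J_\varphi \, \ddr Q_n$ with $J_\varphi \in C_b(L^0(X,Y;m))$ — yields $Q \in \Pi(\mubf)$. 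Since $E$ is l.s.c. and non-negative on $L^0(X,Y;m)$ (by Assumption~\ref{asmp:reg_XYm_E}), Lemma~\ref{lm:lsc_implies_lsc_int} applied in $\Prob(L^0(X,Y;m))$ gives $\int E \, \ddr Q \leq \liminf_n \int E \, \ddr Q_n = \T_E(\mubf)$, so $Q$ is optimal.

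\textbf{Lower semi-continuity.} Let $\mubf_n \to \mubf$ narrowly in $\Mm(X \times Y)$; we must show $\T_E(\mubf) \leq \liminf_n \T_E(\mubf_n)$. Up to extraction we may assume the liminf is a finite limit $L \in [0,+\infty)$ and that $\T_E(\mubf_n) < +\infty$ for all $n$. By the existence statement just proved, pick $Q_n \in \Pi(\mubf_n)$ with $\int E \, \ddr Q_n = \T_E(\mubf_n) \leq L + 1$ for $n$ large. The narrowly convergent sequence $(\mubf_n)$ is tight, so taking $\mathcal{K} = \{\mubf_n : n \in \N\}$ and $C = L+1$ in Lemma~\ref{lm:tightness} gives tightness of $(Q_n)$. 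Extract a further subsequence with $Q_n \to Q$ narrowly in $\Prob(L^0(X,Y;m))$. Passing to the limit in the marginal identity exactly as before — now both sides converge, the left by narrow convergence of $\mubf_n$ and the right by $J_\varphi \in C_b(L^0(X,Y;m))$ — yields $Q \in \Pi(\mubf)$. Lemma~\ref{lm:lsc_implies_lsc_int} then gives
\[
\T_E(\mubf) \leq \int_{L^0(X,Y;m)} E(u) \, Q(\ddr u) \leq \liminf_{n} \int_{L^0(X,Y;m)} E(u) \, Q_n(\ddr u) = L,
\]
which is the desired inequality.

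\textbf{Main obstacle.} The only non-bookkeeping point is the stability of the marginal condition $Q_n \in \Pi(\mubf_n)$ under narrow convergence of both $Q_n$ and $\mubf_n$. This reduces to verifying that $J_\varphi : u \mapsto \int_X \varphi(x,u(x)) \, m(\ddr x)$ is a bounded continuous function on $L^0(X,Y;m)$ for every $\varphi \in C_b(X \times Y)$ — a small but essential lemma that bridges the two topologies appearing in Definition~\ref{def:marginal_Q}; everything else is a standard direct-method argument powered by Lemma~\ref{lm:tightness}.
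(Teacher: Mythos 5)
Your proof is correct and follows essentially the same route as the paper: tightness of energy-bounded couplings via Lemma~\ref{lm:tightness}, Prokhorov, lower semi-continuity of $Q \mapsto \int E \, \ddr Q$ from Lemma~\ref{lm:lsc_implies_lsc_int}, and stability of the marginal condition under joint narrow convergence. Your explicit verification that $u \mapsto \int_X \varphi(x,u(x))\, m(\ddr x)$ is bounded and continuous on $L^0(X,Y;m)$ simply spells out the step the paper dismisses as ``not hard to pass to the limit.''
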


\begin{proof}
Let us first prove existence of a solution provided $\mathcal{T}_E(\mubf) < + \infty$. Take $\mubf \in \Mm(X \times Y)$ and $Q \in \Pi(\mubf)$ such that $C := \int E(u) \, Q(\ddr u) < + \infty$. Lemma~\ref{lm:tightness} implies that the set $\Pi_{\leq C}( \{ \mubf \})$ is tight thus relatively compact for the topology of narrow convergence, and it is non empty. As moreover $Q \mapsto \int E(u) \, Q(\ddr u)$ is l.s.c. for such topology thanks to Lemma~\ref{lm:lsc_implies_lsc_int}, the direct method of calculus of variations yields the result. 

Then let us move to lower semi-continuity. We take a sequence $(\mubf_n)_{n \geq 1}$ in $\Mm(X \times Y)$ which converges narrowly to $\mubf$, and we assume without loss of generality that $C := \liminf_n \mathcal{T}_E(\mubf_n) < + \infty$. For each $n$, let $Q_n \in \Pi(\mubf_n)$ attaining the infimum. Building $\mathcal{K} = \{ \mubf \} \cup \bigcup_n \{ \mubf_n \}$, this set is compact thus tight, and moreover $Q_n \in \Pi_{\leq C+1}(\mathcal{K})$, at least for $n$ along a subsequence. Lemma~\ref{lm:tightness} guarantees that we can extract a subsequence of $(Q_n)_{n \geq 1}$ which converges narrowly to $Q$, which we do not relabel. Using directly the definition of having marginals $\mubf$, it is not hard to pass to the limit the relation $Q_n \in \Pi(\mubf_n)$ to see that $Q \in \Pi(\mubf)$. Thus we get
\begin{equation*}
\mathcal{T}_E(\mubf) \leq \int_{L^0(X,Y;m)} E(u) \, Q(\ddr u) \leq \liminf_{n \to + \infty} \int_{L^0(X,Y;m)} E(u) \, Q_n(\ddr u) = \liminf_{n \to + \infty} \mathcal{T}_E(\mubf_n),
\end{equation*} 
where in the second inequality we use again the lower semi-continuity of $Q \mapsto \int E(u) \, Q(\ddr u)$ for the topology of narrow convergence, justified by Lemma~\ref{lm:lsc_implies_lsc_int}. 
\end{proof}

On the other hand the functional $\T_E$ is an upper bound for any convex and l.s.c. functional satisfying the lifting identity even if Assumption~\ref{asmp:reg_XYm_E} does not hold.

\begin{prop}
\label{prop:TE_upper_bound}
Let us take $\mathcal{T} : \Mm(X \times Y) \to [0, + \infty]$ convex and l.s.c. such that $\mathcal{T}(\mubf_u) \leq E(u)$ for all $u \in L^0(X,Y;m)$. Then $\T \leq \T_E$.
\end{prop}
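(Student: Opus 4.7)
The plan is to apply a Jensen-type inequality to the convex l.s.c. functional $\T$, exploiting the representation $\mubf = \int \mubf_u \, Q(\ddr u)$ from Definition~\ref{def:marginal_Q} that holds for every $Q \in \Pi(\mubf)$. Assuming $\T_E(\mubf) < +\infty$ (otherwise the claim is trivial), it suffices to fix $Q \in \Pi(\mubf)$ with $\int E(u) \, Q(\ddr u) < +\infty$ and to prove $\T(\mubf) \leq \int E(u) \, Q(\ddr u)$, the conclusion then following by taking the infimum over $Q$.

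To make Jensen rigorous I would extend $\T$ to the vector space $V := \M(X \times Y)$ of finite signed measures by setting $\overline{\T}(\nubf) = +\infty$ for $\nubf \notin \Mm(X \times Y)$. The set $\Mm(X \times Y)$ is closed in $V$ for the weak topology $\sigma(V, C_b(X \times Y))$ (which coincides with narrow convergence on $\M_+$, as recalled in Section~\ref{sec:preliminaries}), so $\overline{\T}$ remains convex and l.s.c. Excluding the degenerate case $\T \equiv +\infty$ on $\Mm$, which by hypothesis forces $E \equiv +\infty$ and trivializes the claim, Fenchel--Moreau~\eqref{eq:FenchelMoreau} applied in the pairing $(V, C_b(X \times Y))$ represents $\overline{\T}$ as the pointwise supremum of continuous affine minorants of the form $L(\nubf) = \int \varphi \, \ddr \nubf - c$ with $\varphi \in C_b(X \times Y)$, $c \in \R$, and $L \leq \overline{\T}$.

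For any such $L$, the defining identity of $Q \in \Pi(\mubf)$, the affineness of $L$, and $\int Q(\ddr u) = 1$ combine to give
\begin{equation*}
L(\mubf) = \int L(\mubf_u) \, Q(\ddr u) \leq \int \overline{\T}(\mubf_u) \, Q(\ddr u) = \int \T(\mubf_u) \, Q(\ddr u) \leq \int E(u) \, Q(\ddr u),
\end{equation*}
using $L \leq \overline{\T}$ for the first inequality and the hypothesis $\T(\mubf_u) \leq E(u)$ for the last. Taking the supremum over $L$ on the left recovers $\T(\mubf) = \overline{\T}(\mubf) \leq \int E(u) \, Q(\ddr u)$, as desired. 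The main point to verify is the applicability of Fenchel--Moreau in the stated pairing, which fits the convex analysis framework recalled in Section~\ref{sec:preliminaries}; the measurability of $u \mapsto L(\mubf_u) = \int_X \varphi(x,u(x)) \, m(\ddr x) - c$ needed to make sense of the middle integrals is built into Definition~\ref{def:marginal_Q}.
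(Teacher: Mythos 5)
Your proof is correct and follows essentially the same route as the paper: Jensen's inequality for $\T$ along the representation $\mubf = \int \mubf_u \, Q(\ddr u)$, obtained by writing $\T$ as a supremum of affine minorants $\int \varphi \, \ddr \mubf - c$ with $\varphi \in C_b(X \times Y)$ via Fenchel--Moreau and exchanging with $Q$ through the defining identity of $\Pi(\mubf)$. The only difference is that you spell out the extension of $\T$ to the vector space $\M(X \times Y)$ (and the exclusion of the degenerate case $\T \equiv +\infty$) so that the duality of Section~\ref{sec:preliminaries} applies literally, a detail the paper leaves implicit.
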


\begin{proof}
Fixing $\mubf \in \Mm(X \times Y)$ we want to prove $\mathcal{T}(\mubf) \leq \mathcal{T}_E(\mubf)$. For any $Q \in \Pi(\mubf)$,
\begin{equation*}
\T(\mubf) = \T \left( \int_{L^0(X,Y;m)} \mubf_u \, Q(\ddr u) \right) \leq \int_{L^0(X,Y;m)} \T(\mubf_u) \, Q(\ddr u) \leq \int_{L^0(X,Y;m)} E(u) \, Q(\ddr u),
\end{equation*}
and taking the infimum in $Q \in \Pi(\mubf)$ yields the conclusion. The only delicate step here is the first inequality, namely Jensen's inequality, that we detail now. We obtain it following the standard proof: being convex and l.s.c. for the topology of narrow convergence, the functional $\T$ is the supremum of affine functionals, see~\eqref{eq:FenchelMoreau}: 
\begin{equation*}
\T(\mubf) = \sup_{\varphi \in C_b(X \times Y)} \left\{ \int_{X \times Y} \varphi \, \ddr \mubf - \T^*(\varphi) \right\}. 
\end{equation*}
For any $\varphi \in C_b(X \times Y)$, thanks to the definition of $\Pi(\mubf)$, 
\begin{equation*}
\int_{X \times Y} \varphi \, \ddr \mubf - \T^*(\varphi) = \int_{L^0(X,Y;m)} \left( \int_{X \times Y} \varphi \, \ddr \mubf_u - \T^*(\varphi) \right) Q(\ddr u) \leq \int_{L^0(X,Y;m)} \T(\mubf_u) \, Q(\ddr u). 
\end{equation*}
Taking the supremum in $\varphi$ in the left hand side yields Jensen's inequality and concludes the proof.
\end{proof}

\begin{proof}[\textbf{Proof Theorem~\ref{theo:multimarginalOT_lifting}}]
Lemma~\ref{lm:basic_energy_TE} and Proposition~\ref{prop:lsc_energy_TE} guarantee that $\T_E$ is convex, l.s.c. and satisfies $\T_E(\mubf_u) = E(u)$ for all $u \in L^0(X,Y;m)$. On the other hand Proposition~\ref{prop:TE_upper_bound} proves that $\T_E$ is the l.s.c. and convex envelope of $\widetilde{\T}_E$.

Eventually following the approach of~\cite{SavareSodoni2022}, we prove the second part of Theorem~\ref{theo:multimarginalOT_lifting}, that is, the duality formula~\eqref{eq:duality_MMOT}. We extend $\T_E$ by $1$-homogeneity over $\M_+(X \times Y)$, and then by $+ \infty$ for measures which are not non-negative. That is, for $\mubf \in \M(X \times Y)$ we define
\begin{equation*}
\F(\mubf) = \begin{cases}
\lambda \T_E(\mubf / \lambda) & \text{if } \mubf / \lambda \in \Mm(X \times Y) \text{ for some } \lambda \in (0, + \infty), \\
0 & \text{if } \mubf = 0, \\
+ \infty & \text{otherwise}.
\end{cases}
\end{equation*}
As $\T_E$ is convex and l.s.c., the function $\F$ is easily checked to still be convex and l.s.c. for the topology of narrow convergence. Thus it is equal to $\F^{**}$ its double Legendre transform, see~\eqref{eq:FenchelMoreau}. Computing the Legendre transform over $C_b(X \times Y)$:
\begin{align*}
\F^*(\varphi)  & = \sup_{\mubf, Q, \lambda} \left\{ \lambda \int_X \varphi \, \ddr \mubf - \lambda \int_{L^0(X,Y;m)} E \, \ddr Q \ : \ \mubf \in \Mm(X \times Y), \, Q \in \Pi(\mubf) \text{ and } \lambda \in [0, + \infty) \right\} \\
& = \sup_{Q, \lambda} \left\{ \lambda \int_{L^0(X,Y;m)} \left( \int_X \varphi(x,u(x)) \, m(\ddr x) - E(u) \right) \, Q(\ddr u) \ : \ Q \in \Prob(L^0(X,Y;m)) \text{ and } \lambda \in [0, + \infty) \right\} \\
& = \sup_{\tilde{Q}} \left\{  \int_{L^0(X,Y;m)} \left( \int_X \varphi(x,u(x)) \, m(\ddr x) - E(u) \right) \,\tilde{Q}(\ddr u) \right\}
\end{align*}
where the supremum is now taken over all $\tilde{Q}$ non-negative measure on $L^0(X,Y;m)$, and not only probability measure. Thus taking the supremum yields $+ \infty$ except if the integrand is non-positive, that is, if $\varphi \in \mathcal{C}$ where
\begin{equation*}
\mathcal{C} = \left\{ \varphi \in C_b(X \times Y) \ : \ \forall u \in L^0(X,Y;m), \; \int_{X} \varphi(x,u(x)) \, m (\ddr x) \leq E(u) \right\}.
\end{equation*}
Thus $\F^*$ is the convex indicator of $\mathcal{C}$. Writing $\T_E(\mubf) = \F^{**}(\mubf)$ yields the theorem. 
\end{proof}

\subsection{Reinterpreting already known results}
\label{sec:reinterpret}

In this section, we make briefly the link between some well established results in the literature and the framework we just presented.

\paragraph{The multimarginal optimal transport problem}
\label{sec:MMOT}

We take $X = \{ 1,2, \ldots, N \}$ a finite space. We endow it with $m$ the counting measure. In that case $L^0(X,Y;m)$ reduces to $Y^N$ with the product topology, and thus $E$ becomes a l.s.c. function from $Y^N$ to $[0, + \infty]$ that we denote by $c$. 

A measure $\mubf$ on $X \times Y$ coincides with the collection of $N$ measures $\mu_1, \ldots, \mu_N$ corresponding to $\mu_k = \mubf(\{ k \} \times Y)$ for any $k \in \{ 1, \ldots N \}$. Thus $\Mm(X \times Y)$ is in bijection with $\Prob(Y)^N$, and this is true as well for the topology on these spaces. Moreover, a probability distribution $Q$ over $Y^N$ belongs to $\Pi(\mubf)$ if its marginals are $\mu_1, \ldots, \mu_N$ following Remark~\ref{rmk:continuous_marginals}. We write $\Pi(\mu_1, \ldots, \mu_N)$ rather than $\Pi(\mubf)$ in this case. In summary, 
\begin{equation*}
\T_c(\mu_1, \ldots, \mu_N) = \min_{Q} \left\{ \int_{Y^N} c(y_1, \ldots, y_N) \, Q(\ddr y_1, \ldots, \ddr y_N) \ : \ Q \in \Prob(Y^N) \text{ and } Q \in \Pi(\mu_1, \ldots, \mu_N) \right\}
\end{equation*}
coincides with the value of the multimarginal optimal transport problem~\cite{pass2015} with cost $c$. Moreover, the cost function $c$ always satisfies Assumption~\ref{asmp:reg_XYm_E}, as actually the function $(y_1, \ldots, y_N) \mapsto \psi(y_1) + \ldots + \psi(y_N)$ has always compact sublevel sets in $Y^N$ if $\psi$ has compact sublevel sets in $Y$. This is well known for classical optimal transport: the tightness needed to get existence of an optimal transport plan comes only from tightness of the marginals, not from the coercivity of the cost function $c$. Theorem~\ref{theo:multimarginalOT_lifting} reads as follows in this context.

\begin{theo}
If $Y$ is a Polish space, then the functional $\T_c$ is the convex and lower semi-continuous envelope of the functional $\widetilde{\T}_c$ defined on $\Prob(Y)^N$ by $\widetilde{\T}(\delta_{y_1}, \ldots, \delta_{y_N}) = c(y_1, \ldots, y_N)$ for any $(y_1, \ldots, y_N) \in Y^N$ and $+\infty$ otherwise.
\end{theo}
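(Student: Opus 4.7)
The plan is to obtain this theorem as a direct corollary of Theorem~\ref{theo:multimarginalOT_lifting}, after verifying that all objects reduce to their classical multimarginal counterparts and that Assumption~\ref{asmp:reg_XYm_E} is automatic in this discrete setting.

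First I would make the identifications precise. With $X = \{1, \ldots, N\}$ and $m$ the counting measure, two measurable maps $X \to Y$ coincide $m$-a.e.\ iff they coincide everywhere, so $L^0(X,Y;m)$ equals $Y^N$ as a set and the distance $d_{L^0(X,Y;m)}(u,v) = \sum_{k=1}^N \min(d_Y(u(k),v(k)),1)$ generates the product topology. On the measure side, a non-negative measure $\mubf$ on $X \times Y$ with first marginal $m$ decomposes canonically as $\mubf = \sum_{k=1}^N \delta_k \otimes \mu_k$ with $\mu_k = \mubf(\{k\} \times \cdot) \in \Prob(Y)$, and the bijection $\mubf \leftrightarrow (\mu_1,\ldots,\mu_N)$ is a homeomorphism between $\Mm(X \times Y)$ with narrow convergence and $\Prob(Y)^N$ with the product of narrow topologies (test against $\varphi(k,y)$ coordinate by coordinate). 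Under this identification $\widetilde{\T}_c$ is exactly the function defined in the statement.

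Next I would check Assumption~\ref{asmp:reg_XYm_E}. The functional $E = c$ is l.s.c.\ by hypothesis. Given any $\psi : Y \to [0, + \infty)$ with compact sublevel sets, the map
\begin{equation*}
u = (y_1, \ldots, y_N) \longmapsto c(u) + \int_X \psi(u(x)) \, m(\ddr x) = c(y_1, \ldots, y_N) + \sum_{k=1}^N \psi(y_k)
\end{equation*}
has sublevel set at height $L$ contained, since $c \geq 0$, in $\{ u : \sum_k \psi(y_k) \leq L \} \subseteq \prod_{k=1}^N \{\psi \leq L\}$, which is a finite product of compact sets of $Y$, hence compact in $Y^N$. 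So Assumption~\ref{asmp:reg_XYm_E} holds without any further restriction on $c$; the coercivity in Assumption~\ref{asmp:reg_XYm_E} is ``for free'' in the discrete case, which matches the familiar fact in classical OT that tightness of the marginals alone yields existence of an optimizer.

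It remains to recognize that $\T_c$ as in Definition~\ref{definition:MMOT_infinity} is the standard multimarginal OT value. Since $Y^N$ is Polish and the evaluation maps $e_k : u \mapsto u(k)$ are continuous, Remark~\ref{rmk:continuous_marginals} applies and gives $Q \in \Pi(\mubf) \Leftrightarrow e_k \# Q = \mu_k$ for each $k$, i.e.\ $Q \in \Pi(\mu_1, \ldots, \mu_N)$ in the classical sense. Thus~\eqref{def:general_OT} becomes $\int_{Y^N} c \, \ddr Q$ minimized over couplings with prescribed marginals, and Theorem~\ref{theo:multimarginalOT_lifting} delivers the desired identification of $\T_c$ with the convex l.s.c.\ envelope of $\widetilde{\T}_c$. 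No genuinely new analytic step is required; the only thing to watch out for is simply to carry out the translation between $\Mm(X \times Y)$ and $\Prob(Y)^N$ in a way that respects the narrow topology on both sides, which is why I would spell out the coordinatewise homeomorphism at the start.
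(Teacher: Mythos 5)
Your proposal is correct and takes essentially the same route as the paper: identify $L^0(X,Y;m)$ with $Y^N$ and $\Mm(X \times Y)$ with $\Prob(Y)^N$ (homeomorphically, testing coordinatewise), observe that Assumption~\ref{asmp:reg_XYm_E} holds automatically since $(y_1,\ldots,y_N) \mapsto \psi(y_1)+\cdots+\psi(y_N)$ has compact sublevel sets, match $\Pi(\mubf)$ with the classical coupling set via Remark~\ref{rmk:continuous_marginals}, and invoke Theorem~\ref{theo:multimarginalOT_lifting}. The paper's justification is exactly this reduction, only written more briefly in the text preceding the theorem.
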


\noindent Of course, one does not need all of our machinery to prove the result: a straightforward adaptation of the arguments of~\cite{SavareSodoni2022} would yield it, in the more general setting where $Y$ is a completely regular space.

\paragraph{Measures on curves and curves of measures}

A more elaborate case, but already well studied, is when one faces curves of measures, that is, when $X = I = [0,1]$ is a segment of $\R$ endowed with $m$ the Lebesgue measure. We also take $Y = \R^q$ a Euclidean space to simplify the presentation, more general settings are covered in the references~\cite{Lisini2007,AGS} cited in the introduction. In this case we will only consider, for $p \in (1, + \infty)$ an exponent and $k \geq 1$ an integer, the functional
\begin{equation}
E(u) = \begin{cases}
\dst{ \frac{1}{p} \int_0^1 |u^{(k)}(t)|^p \, \ddr t } & \text{if } u \in W^{k,p}(I,\R^q), \\
+ \infty & \text{otherwise},
\end{cases}
\end{equation}
being $u^{(k)}$ the $k$-th derivative of the function $u$.
As proved in Proposition~\ref{prop:checking_assumption_A} deferred in the appendix, this functional $E$ always satisfies Assumption~\ref{asmp:reg_XYm_E}. Contrary to the case of multimarginal optimal transport, here coercivity of $E$ really matters.

In this case, we are in the framework of Remark~\ref{rmk:continuous_marginals}, as any $Q$ ``measure on curves'' yielding finite cost $\int E \, \ddr Q$ is concentrated on continuous functions from $I$ to $\R^q$. We denote by $e_t : C(I, \R^q) \to \R^q$ the evaluation map at time $t \in [0,1]$. Furthermore, following Remark~\ref{rmk:continuous_marginals}, for any $Q$ with finite cost the curve of marginals $t \mapsto e_t \# Q$ will be continuous for the topology of narrow convergence. Thus the lifted functional reads as follows for a ``curve of measures'' $(\mu_t)_{t \in I}$ which is narrowly continuous
\begin{equation*}
\T_E((\mu_t)_{t \in I}) = \min_Q \left\{ \int_{W^{k,p}(I, \R^q)} \frac{1}{p} \int_0^1 |u^{(k)}(t)|^p \, \ddr t \ : \ Q \in \Prob(W^{k,p}(I, \R^q)) \text{ and } \forall t \in I, \, e_t \# Q = \mu_t \right\}. 
\end{equation*}
With Theorem~\ref{theo:multimarginalOT_lifting} we see that it is the convex and l.s.c. envelope of $\widetilde{\T}_E$.

The lifted functional $\T_E$ is central in the optimal transport theory as discussed in the introduction: the case $k=1$ relates to the velocity of curves in Wasserstein space, and the case $k=2$ and $p=2$ is linked to splines in the Wasserstein space.

\section{The Eulerian lifting for functionals depending only on first order \texorpdfstring{\\}{} derivatives}
\label{sec:lifting_TEE}

In this section we introduce the Eulerian lifting $\TEE$ and present its main properties. We will discuss the difference between $\TEE$ and $\T_E$ in the next section. We concentrate on the case where $X = \Omega$ is an open bounded subset of $\R^d$ with $d \geq 1$, while $Y = \R^q$ is a Euclidean space. We take $m$ to be the Lebesgue measure restricted to $\Omega$, and we denote integration with respect to it by $\ddr x$. We are interested in functionals $E$ of the form~\eqref{eq:intro_energy_generic}, that is, 
\begin{equation*}
u \mapsto \int_\Omega W(\nabla u(x)) \, \ddr x + \int_\Omega f(x,u(x)) \, \ddr x.
\end{equation*} 
The second part $u \mapsto \int_\Omega f(x,u(x)) \, \ddr x$ of the functional has already been dealt with in Proposition~\ref{prop:envelope_linear}, and can be lifted without any problem as it becomes linear. The more involved part is $u \mapsto \int W(\nabla u)$, so we will consider only this case in this section. 

Some lifted version $\TEE$ mimicking the ``fluid dynamic'' formulation~\eqref{eq:intro_dynamic_curves} for curves of measures, or ``Eulerian'' formulation, has already been proposed as discussed in the introduction. We will introduce $\TEE$, present some results about well-posedness, dual formulation and lifting identity. Most of the results can already be found in the literature, either in a slightly more specific or slightly more general setting. We only require the following on $W$.

\begin{asmp}
\label{asmp:W}
The function $W : \R^{qd} \to [0, + \infty)$ is convex and finite everywhere on $\R^{qd}$. Moreover, it grows at least like $|v|^p$ for $p \in [1, + \infty)$ in the sense
\begin{equation*}
W(v)  \geq  C_1 |v|^p - C_2,
\end{equation*} 
for some $C_1 > 0$ and $C_2 \geq 0$. Here, $| \cdot |$ denotes any norm on the space of matrices. 
\end{asmp}

\begin{rmk}
It could be possible to allow $W$ to depend on $x$, that is, to take a functional of the form $u \mapsto \int_\Omega W(x, \nabla (u)) \, \ddr x$. Though we think most of the results of this section to carry through without any major difficulty, it is not the case for the next section as Lemma~\ref{lm:regularization_measure_valued} would break down. Thus we stick to  $W$ independent of $x$. 
\end{rmk}

For such a function $W$, we define its recession function~\cite[Definition 2.32]{AFP}, written $W'_\infty$, by 
\begin{equation*}
W'_\infty(v) = \lim_{t \to + \infty} \frac{W(tv)}{t},
\end{equation*}
which is well-defined thanks the convexity of $W$. 
It is $1$-homoegenous, convex, and moreover under Assumption~\ref{asmp:W}
\begin{equation}
\label{eq:lower_bound_recession}
W'_\infty(v) \geq \begin{cases}
C_1 |v| & \text{if } p =1, \\
+ \infty & \text{if } p >1, \, v \neq 0.
\end{cases}
\end{equation} 
We also define its Legendre transform $W^*(r) = \sup_v r \cdot v - W(v)$. Under Assumption~\ref{asmp:W} we can check that $W^*$ is defined at least on a neighborhood of $0$.

\begin{defi}
\label{defi:E_from_W}
Let $W$ which satisfies Assumption~\ref{asmp:W}. We define $E(u) \in [0,+ \infty]$ for $u \in L^0(\Omega,\R^q;m)$ as 
\begin{equation*}
E(u) = \begin{cases}
\dst{\int_\Omega W(\nabla^a u(x)) \, \ddr x + \int_\Omega W'_\infty \left( \frac{\ddr \nabla^s u}{\ddr |\nabla^s u|} \right) \ddr |\nabla^s u|}  & \text{if } u \in \mathrm{BV}(\Omega,\R^q), \\
+ \infty & \text{otherwise},
\end{cases}
\end{equation*}
being $\nabla u = (\nabla^a u)m + \nabla^s u$ the Radon-Nikodym decomposition of the distributional Jacobian matrix of $u$ with respect to the Lebesgue measure. 
\end{defi}

\noindent In the case $p>1$, the second term forces $\nabla^s u = 0$ for $E(u)$ to be finite, and thus $E$ has the simpler expression
\begin{equation*}
E(u) = \begin{cases}
\dst{\int_\Omega W(\nabla u(x)) \, \ddr x } & \text{if } u \in W^{1,p}(\Omega,\R^q), \\
+ \infty & \text{otherwise}.
\end{cases}
\end{equation*}
The case $p=1$ requires special care as usual for functions with sublinear growth in the calculus of variations. As we prove in Appendix~\ref{sec:appendix_assmp_A}, specifically in Proposition~\ref{prop:checking_assumption_A} and Proposition~\ref{prop:checking_assumption_A_p_=1}, the functional $E$ is l.s.c. for the topology of $L^0(\Omega,\R^q;m)$ convergence and satisfies Assumption~\ref{asmp:reg_XYm_E}, including for the case $p=1$. 

Next we move on to the definition of $\TEE$, built by mimicking the fluid dynamic formulation for measure-valued curves~\eqref{eq:intro_dynamic_curves}. We add an additional variable $J$, a $q \times d$ matrix-valued measure on $\Omega \times \R^q$, that we see as the collection of $q \times d$ signed measures. If it exists, its density with respect to $\mubf$, called $v$ in the introduction, could be interpreted as the ``density of Jacobian matrix''. 

\begin{defi}
\label{def:energyBW}
For $\mubf \in \M_+(\Omega \times \R^q)$ and $J \in \M(\Omega \times \R^q)^{qd}$, writing $J = v \mubf + J^\perp$ the Radon-Nikodym decomposition of $J$ with respect to $\mubf$, we define
\begin{equation*}
\B_W(\mubf, J) = \int_{\Omega \times \R^q} W(v(x,y)) \, \mubf(\ddr x, \ddr y) + \int_{\Omega \times \R^q}  W'_\infty \left( \frac{\ddr J^\perp}{\ddr |J^\perp|} \right) \, \ddr |J^\perp|.
\end{equation*}
\end{defi}

\noindent Note that if $p>1$ then thanks to~\eqref{eq:lower_bound_recession} the second term is the convex indicator of the set $|J^\perp|(\Omega \times \R^q) = 0$, that is, of the constraint $J \ll \mubf$. We defer to Appendix~\ref{sec:appendix_BW} for some standard properties of the functional $\B_W$. 

Eventually we add a constraint $\nabla_x \mubf + \nabla_y \cdot J = 0$ and minimize over $J$ to obtain $\TEE$. Expanded in coordinates it reads:
\begin{equation*}
\forall j \in \{1, 2, \ldots, d \}, \quad \partial_{x_j} \mubf + \sum_{i=1}^q \partial_{y_i} J_{ij} = 0,
\end{equation*}
where $J_{ij}$ for $i \in \{1, 2, \ldots, q \}$ and $j \in \{1, 2, \ldots, d \}$ correspond to the entries of the matrix-valued measure $J$. We will impose this constraint in a weak sense: to express it we will need test functions $\varphi = (\varphi_j)_{1 \leq j \leq d} : \Omega \times \R^q \to \R^d$, for which we define the differential operators $\nabla_x \cdot \varphi$ and $\nabla_y \varphi$, respectively real-valued and $q \times d$ matrix-valued, by: 
\begin{equation*}
\nabla_x \cdot \varphi = \sum_{j=1}^d \partial_{x_j} \varphi_j, \qquad \nabla_y \varphi = \left( \partial_{y_i} \varphi_j \right)_{1 \leq i \leq q, \, 1 \leq j \leq d}.
\end{equation*}

\begin{rmk}
\label{rmk:transpose_varphi}
We do a small abuse of notations in order to lighten them: here we see $\nabla_y \varphi$ as a $q \times d$ matrix, so it would rather correspond to the \emph{transpose} of the Jacobian matrix of the map $\varphi$ with respect to the $y$ variables, which is rather a $q \times d$ matrix. Thanks to this convention, $\nabla_y \varphi$ and $J$ have both the same dimension.
\end{rmk}

\begin{defi}
\label{defi:TEE}
For $\mubf  \in \Mm(\Omega \times \R^q)$ we define
\begin{equation*}
\TEE(\mubf) = \min_J \left\{ \B_W(\mubf, J) \ : \ J \in \M(\Omega \times \R^q)^{qd} \text{ such that } \nabla_x \mubf + \nabla_y \cdot J = 0  \right\},
\end{equation*} 
where the equation $\nabla_x \mubf + \nabla_y \cdot J = 0$ is understood in a weak sense, that is, for every smooth compactly supported test function $\varphi : \Omega \times \R^q \to \R^d$,
\begin{equation*}
\int_{\Omega \times \R^q} \nabla_x \cdot \varphi(x,y) \, \mubf(\ddr x, \ddr y) + \int_{\Omega \times \R^q} \nabla_y \varphi(x,y) \cdot J(\ddr x, \ddr y) = 0.
\end{equation*}  
\end{defi}

\noindent Following Remark~\ref{rmk:transpose_varphi}, this equation reads in coordinates:
\begin{equation*}
\sum_{j=1}^d \int_{\Omega \times \R^q} \partial_{x_j} \varphi_j \, \ddr \mubf + \sum_{i=1}^q \sum_{j=1}^d \int_{\Omega \times \R^q} \partial_{y_i} \varphi_j \, \ddr J_{ij} = 0.
\end{equation*}

Now that we have our definition, there are a few properties that we want to collect: first that $\TEE$ is convex and l.s.c., and then a duality formula which enable to prove that $\TEE$ coincides with $\widetilde{\T}_E$ on maps $\mubf_u$ with $u \in L^0(X,Y;m)$.

\begin{lm}
\label{lemma:existence_tangent_J}
For $\mubf  \in \Mm(\Omega \times \R^q)$ such that $\TEE(\mubf, \Omega) < + \infty$, there exists $J \in \M(\Omega \times \R^q)^{qd}$ such that $\nabla_x \mubf + \nabla_y \cdot J = 0$ in a weak sense and $\TEE(\mubf) = \B_W(\mubf,J)$. Such a measure $J$ is said \emph{tangent} to $\mubf$.
\end{lm}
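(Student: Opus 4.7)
I would prove this by the direct method in the calculus of variations: take a minimizing sequence, extract a weak-$\star$ convergent subsequence using the coercivity built into $\B_W$, pass to the limit in the (linear) constraint, and conclude by lower semicontinuity of $\B_W$.

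\textbf{Compactness of a minimizing sequence.} Fix a minimizing sequence $(J_n)$ of admissible matrix-valued measures with $\B_W(\mubf, J_n) \to \TEE(\mubf)$. I would first show that $|J_n|(\Omega \times \R^q)$ is uniformly bounded. When $p=1$, the lower bound $W(v) \geq C_1 |v| - C_2$ and $W'_\infty(v) \geq C_1 |v|$ directly give
\begin{equation*}
\B_W(\mubf, J_n) \geq C_1 |J_n|(\Omega \times \R^q) - C_2 \mubf(\Omega \times \R^q),
\end{equation*}
since $\mubf(\Omega\times\R^q)=m(\Omega)<+\infty$. When $p>1$, the recession bound \eqref{eq:lower_bound_recession} forces $J_n \ll \mubf$ with density $v_n$, and $\int W(v_n) \, \ddr\mubf$ bounded yields $\int |v_n|^p \, \ddr\mubf$ bounded, hence by Hölder (using finiteness of $\mubf$) control on $\int |v_n| \, \ddr\mubf = |J_n|(\Omega \times \R^q)$. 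Thus $(J_n)$ lies in a bounded subset of $\M(\Omega \times \R^q)^{qd}$ and, since $\Omega \times \R^q$ is locally compact, I extract a subsequence converging weak-$\star$ to some $J \in \M(\Omega \times \R^q)^{qd}$.

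\textbf{Passing to the limit in the constraint.} The constraint $\nabla_x \mubf + \nabla_y \cdot J_n = 0$ is tested against smooth compactly supported $\varphi : \Omega \times \R^q \to \R^d$, and reads
\begin{equation*}
\int_{\Omega \times \R^q} \nabla_x \cdot \varphi \, \ddr \mubf + \int_{\Omega \times \R^q} \nabla_y \varphi \cdot \ddr J_n = 0.
\end{equation*}
Since $\nabla_y \varphi \in C_c(\Omega \times \R^q, \R^{qd})$ and $J_n \rightharpoonup^* J$, I pass to the limit and obtain $\nabla_x \mubf + \nabla_y \cdot J = 0$ in the weak sense, so $J$ is admissible.

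\textbf{Lower semicontinuity and conclusion.} It remains to show $\B_W(\mubf, J) \leq \liminf_n \B_W(\mubf, J_n)$. Because $W$ is convex with recession $W'_\infty$, this is an instance of the standard lower semicontinuity for convex functionals of a measure (Reshetnyak type), which the paper collects in Appendix~\ref{sec:appendix_BW}; here $\mubf$ is fixed, so only $J$ varies, and one may regard the integrand as a convex function of $(\mubf, J)$ that is $1$-homogeneous in the singular directions. Applying this lower semicontinuity gives $\B_W(\mubf, J) \leq \TEE(\mubf)$, and since $J$ is admissible the reverse inequality is immediate, so $\B_W(\mubf, J) = \TEE(\mubf)$ as required.

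The main obstacle is the lower semicontinuity step, especially in the case $p>1$ where the weak-$\star$ limit $J$ might a priori develop a singular part with respect to $\mubf$; such a part is, however, penalized with $W'_\infty = +\infty$ in the limit functional, which both forbids it along any recovery sequence and is precisely why the limiting $J$ remains absolutely continuous with respect to $\mubf$. Once this is invoked from the appendix, everything else is a routine application of the direct method.
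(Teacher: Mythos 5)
Your proof is correct and follows essentially the same route as the paper: the direct method, with the coercivity estimate~\eqref{eq:coercivity_BW} giving weak-$\star$ compactness of a minimizing sequence, the linear constraint passing to the limit against $C_c^1$ test functions, and the lower semicontinuity of $\B_W$ from Proposition~\ref{prop:dual_BW} closing the argument. Your case split ($p=1$ vs.\ $p>1$) for the mass bound is a slightly longer way of obtaining what \eqref{eq:coercivity_BW} states uniformly, but it is correct, and your remark that finiteness of the limit energy forces $J \ll \mubf$ when $p>1$ is also sound.
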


\noindent If $W$ is strictly convex, then there exists a unique tangent $J$.

\begin{proof}
We use the direct method of calculus of variations. For a minimizing sequence $(J_n)_{n \in \N}$ the estimate~\eqref{eq:coercivity_BW} guarantees that the mass of $|J_n|$ is bounded, thus weak-$\star$ converges. Lower semi-continuity of $\B_W$ together with closedness of the continuity equation in its weak form with respect to weak-$\star$ convergence enable to conclude.
\end{proof}

\begin{prop}
\label{prop:TEE_convex_lsc}
The functional $\TEE$ is convex and lower semi-continuous over $\Mm(\Omega \times \R^q)$. 
\end{prop}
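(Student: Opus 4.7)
The plan is to treat the two properties in turn, using as building blocks the existence of optimal tangent $J$ from Lemma~\ref{lemma:existence_tangent_J}, the coercivity and joint lower semi-continuity of $\B_W$ collected in the appendix, and the linearity of the continuity equation $\nabla_x \mubf + \nabla_y \cdot J = 0$ in its weak formulation.

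\emph{Convexity.} Let $\mubf_0, \mubf_1 \in \Mm(\Omega \times \R^q)$ and $\lambda \in [0,1]$. If one of $\TEE(\mubf_0), \TEE(\mubf_1)$ is $+\infty$ there is nothing to prove, so I assume both are finite and invoke Lemma~\ref{lemma:existence_tangent_J} to get tangent measures $J_0, J_1$ realising the infima. Set $\mubf_\lambda = (1-\lambda)\mubf_0 + \lambda \mubf_1$ and $J_\lambda = (1-\lambda)J_0 + \lambda J_1$. By linearity of the weak continuity equation against smooth compactly supported test functions, $\nabla_x \mubf_\lambda + \nabla_y \cdot J_\lambda = 0$, so $J_\lambda$ is admissible for $\mubf_\lambda$. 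The joint convexity of $\B_W$ on $\M_+(\Omega \times \R^q) \times \M(\Omega \times \R^q)^{qd}$ (a standard property of the $1$-homogeneous extension of $W$, established in the appendix) then yields $\TEE(\mubf_\lambda) \leq \B_W(\mubf_\lambda, J_\lambda) \leq (1-\lambda)\B_W(\mubf_0,J_0) + \lambda \B_W(\mubf_1,J_1)$, which is the claim.

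\emph{Lower semi-continuity.} Take a sequence $\mubf_n \to \mubf$ narrowly in $\Mm(\Omega \times \R^q)$, and assume without loss of generality that $C := \liminf_n \TEE(\mubf_n) < +\infty$ and that the liminf is a genuine limit along a subsequence (not relabelled). Applying Lemma~\ref{lemma:existence_tangent_J} to each $\mubf_n$, I get $J_n$ with $\nabla_x \mubf_n + \nabla_y \cdot J_n = 0$ in the weak sense and $\B_W(\mubf_n, J_n) = \TEE(\mubf_n) \to C$. The coercivity estimate~\eqref{eq:coercivity_BW} applied to $(\mubf_n, J_n)$, combined with the uniform bound $\mubf_n(\Omega \times \R^q) = m(\Omega) < +\infty$, gives a uniform bound on the total variation $|J_n|(\Omega \times \R^q)$. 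Hence, up to a further subsequence, $J_n \rightharpoonup^\star J$ in $\M(\Omega \times \R^q)^{qd}$.

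I then pass to the limit in the constraint and in the action. For the constraint, any smooth compactly supported $\varphi : \Omega \times \R^q \to \R^d$ gives test functions $\nabla_x \cdot \varphi \in C_b(\Omega \times \R^q)$ and $\nabla_y \varphi \in C_c(\Omega \times \R^q; \R^{qd})$, so narrow convergence of $\mubf_n$ and weak-$\star$ convergence of $J_n$ yield $\nabla_x \mubf + \nabla_y \cdot J = 0$. For the action, the joint lower semi-continuity of $\B_W$ under weak-$\star$ convergence of both arguments (a standard consequence of $W$ being convex and $W'_\infty$ being its recession, deferred to the appendix) gives $\B_W(\mubf, J) \leq \liminf_n \B_W(\mubf_n, J_n) = C$. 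Since $J$ is admissible for $\mubf$, this shows $\TEE(\mubf) \leq C$, which is the desired lower semi-continuity.

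The main obstacle here is not the abstract direct-method scheme but the underlying properties of $\B_W$: joint convexity, joint weak-$\star$ lower semi-continuity, and the coercivity bound linking $|J|$ to $\B_W(\mubf, J)$ and the mass of $\mubf$. These are all classical for Benamou--Brenier-type action functionals and are the content of the appendix on $\B_W$, so the proof of Proposition~\ref{prop:TEE_convex_lsc} itself reduces to combining them with the linearity and closedness of the continuity equation.
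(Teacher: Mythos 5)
Your proof is correct and follows essentially the same route as the paper: tangent measures from Lemma~\ref{lemma:existence_tangent_J}, linearity of the weak continuity equation plus joint convexity of $\B_W$ for convexity, and the coercivity bound~\eqref{eq:coercivity_BW} with weak-$\star$ compactness, closedness of the constraint, and joint lower semi-continuity of $\B_W$ (Proposition~\ref{prop:dual_BW}) for lower semi-continuity. The only differences are cosmetic (you handle the trivially infinite case and the admissibility of the test functions a bit more explicitly).
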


\begin{proof}
Convexity is straightforward. Take $\mubf_1, \mubf_2 \in \Mm(\Omega \times \R^q)$ and take $J_1, J_2 \in \M(\Omega \times \R^q)^{qd}$ tangent to respectively $\mubf_1$ and $\mubf_2$. If $\mubf = (1- \lambda) \mubf_1 + \lambda \mubf_2$ for $\lambda \in [0,1]$, then with $J = (1- \lambda) J_1 + \lambda J_2$ it is clear by linearity that $\nabla_x \mubf + \nabla_y \cdot J = 0$. Moreover, by joint convexity of $\B_W$ (see Proposition~\ref{prop:dual_BW}) we have $\B_W(\mubf,J) \leq (1- \lambda) \B_W(\mubf_1,J_1) + \lambda \B_W(\mubf_2,J_2)$ and this is enough to conclude. 

Then we proceed to lower semi-continuity. To that end, if $(\mubf_n)_{n \geq 1}$ is a sequence in $\Mm(\Omega \times \R^q)$ converging narrowly to $\mubf$ with $\liminf_n \TEE(\mubf_n) < + \infty$, we take $J_n$ the tangent measure to $\mubf_n$ for any $n$. Then, again thanks to~\eqref{eq:coercivity_BW} the sequence of measure $(J_n)_{n \in \N}$ converges weak-$\star$, up to extraction, to some $J$. Taking the weak limit of $\nabla_x \mubf_n + \nabla_y \cdot J_n = 0$ yields $\nabla_x \mubf + \nabla_y \cdot J = 0$, and the (joint) lower semi-continuity of $\B_W$ (see Proposition~\ref{prop:dual_BW}) yields
\begin{equation*}
\TEE(\mubf) \leq \B_W(\mubf,J) \leq \liminf_{n \to + \infty} \B_W(\mubf_n,J_n)  = \liminf_{n \to + \infty} \TEE(\mubf_n). \qedhere
\end{equation*} 
\end{proof}

We move to a duality formula, which is not surprising as the problem defining $\TEE(\mubf)$ is a convex optimization problem with a linear constraint.  The duality can be thought as ``$\mubf$ is given and only $J$ is the optimization variable'', and is an easy adaptation of the techniques we learned in~\cite{santambrogio2018regularity}, originally taken from~\cite{bouchitte2001characterization}.

\begin{theo}[duality for $\TEE$]
\label{theo:duality_TWd}
If $W$ satisfies Assumption~\ref{asmp:W}, for any $\mubf \in \Mm(\Omega \times \R^q)$, there holds
\begin{multline}
\label{eq:duality_TWd}
\TEE(\mubf) = \sup_{\varphi} \Bigg\{ - \int_{\Omega \times \R^q} \left( \nabla_x \cdot \varphi + W^*(\nabla_y \varphi)  \right) \, \ddr \mubf \ : \ \varphi \in C^1_c(\Omega \times \R^q, \R^{d}) \\
 \text{ and } W^*(\nabla_y \varphi) < + \infty \text{ on } \Omega \times \R^q   \Bigg\}.
\end{multline}
\end{theo}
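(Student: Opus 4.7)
The plan is to derive \eqref{eq:duality_TWd} by a Fenchel--Rockafellar duality argument on the space of matrix-valued measures, adapted from the approach of \cite{bouchitte2001characterization, santambrogio2018regularity}. I will write $\mathcal{D}(\mubf)$ for the supremum appearing in the right-hand side of \eqref{eq:duality_TWd}.

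\textbf{Weak duality $\mathcal{D}(\mubf) \leq \TEE(\mubf)$.} Given an admissible test field $\varphi$ (so that $W^*(\nabla_y\varphi)<+\infty$ everywhere) and a $J$ feasible for $\TEE(\mubf)$, I first test the constraint $\nabla_x\mubf + \nabla_y\cdot J = 0$ against $\varphi$ to obtain $-\int\nabla_x\cdot\varphi\,\ddr\mubf = \int\nabla_y\varphi \cdot \ddr J$. Decomposing $J = v\mubf + J^\perp$ and applying the Fenchel--Young inequality $a\cdot b \leq W(a)+W^*(b)$ $\mubf$-a.e.\ on the absolutely continuous part gives $\int\nabla_y\varphi\cdot v\,\ddr\mubf \leq \int W(v)\,\ddr\mubf + \int W^*(\nabla_y\varphi)\,\ddr\mubf$. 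For the singular part, the crucial observation is that $W^*(\nabla_y\varphi)<+\infty$ forces $(W'_\infty)^*(\nabla_y\varphi)=0$, so Young's inequality yields $\nabla_y\varphi \cdot (\ddr J^\perp/\ddr|J^\perp|) \leq W'_\infty(\ddr J^\perp/\ddr|J^\perp|)$ pointwise $|J^\perp|$-a.e. Summing both parts gives $\int\nabla_y\varphi\cdot \ddr J \leq \B_W(\mubf,J) + \int W^*(\nabla_y\varphi)\,\ddr\mubf$, and combining with the test identity and then optimizing over $\varphi$ and $J$ yields $\mathcal{D}(\mubf)\leq\TEE(\mubf)$.

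\textbf{Strong duality via a perturbation function.} For the reverse inequality I would introduce the value function
\begin{equation*}
V(\eta) \;=\; \inf_{J} \left\{\, \B_W(\mubf, J) \ : \ \nabla_x\mubf + \nabla_y\cdot J = \eta \,\right\}
\end{equation*}
on the space of $\R^d$-valued distributions, so that $\TEE(\mubf)=V(0)$. Integrating by parts to transfer derivatives onto $\varphi\in C^1_c(\Omega\times\R^q,\R^d)$, and computing the Legendre transform of $\B_W(\mubf,\cdot)$ with the help of Proposition~\ref{prop:dual_BW} (which identifies $\B_W(\mubf,\cdot)^*(w)=\int W^*(w)\,\ddr\mubf$ whenever $W^*(w)$ is finite everywhere, and $+\infty$ otherwise), a direct computation gives
\begin{equation*}
V^*(\varphi) \;=\; -\int \nabla_x\cdot\varphi\,\ddr\mubf \;+\; \int W^*(-\nabla_y\varphi)\,\ddr\mubf.
\end{equation*}
Then $V^{**}(0)=\sup_\varphi -V^*(\varphi)$, and after the sign flip $\varphi \mapsto -\varphi$ this is precisely $\mathcal{D}(\mubf)$. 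The theorem is therefore equivalent to showing $V(0)=V^{**}(0)$, i.e.\ that $V$ coincides with its convex lower semi-continuous envelope at the origin.

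\textbf{Main obstacle.} The main difficulty is precisely establishing $V(0)=V^{**}(0)$, which reduces to the convexity and lower semi-continuity of $V$ at $\eta=0$. Convexity follows from standard manipulations (linearly combining competitors). Lower semi-continuity is more delicate: the usual Fenchel--Rockafellar qualification (continuity of $\B_W(\mubf,\cdot)$ at some feasible $J$) fails on the Banach space of measures. I plan to handle this by regularization: convolve $\mubf$ with a smooth compactly supported kernel in both $x$ and $y$ to produce a measure with smooth density, for which the duality follows from a classical Sobolev-type argument (now $J$ is a genuine $L^p$ vector field), and then transfer the equality to general $\mubf$ by passing to the limit. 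The passage uses (a) lower semi-continuity of $\TEE$ from Proposition~\ref{prop:TEE_convex_lsc} for the upper bound on the primal, and (b) the fact that $\mathcal{D}(\mubf)$ is automatically lower semi-continuous as a supremum of narrowly continuous linear functionals of $\mubf$. An alternative route is to use the existence of an optimal tangent $J^\star$ from Lemma~\ref{lemma:existence_tangent_J} together with weak-$\star$ compactness of minimizing sequences to upgrade convexity of $V$ into sequential lower semi-continuity at $0$ directly.
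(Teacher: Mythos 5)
Your overall architecture is the same as the paper's: a perturbation/value function for the constraint $\nabla_x\mubf+\nabla_y\cdot J=\eta$, the Fenchel--Moreau identity $V(0)=V^{**}(0)$, and the computation of the conjugate through the dual representation of $\B_W$ (the paper uses exactly~\eqref{eq:dual_BW_only_J}, i.e.\ that the Legendre transform of $J\mapsto\B_W(\mubf,J)$ is $b\mapsto\int W^*(b)\,\ddr\mubf$ restricted to $b$ with $W^*(b)$ finite everywhere). Your weak-duality paragraph is correct (including the observation that $W^*(b)<+\infty$ forces $b\cdot v\le W'_\infty(v)$, which handles $J^\perp$), although in the abstract scheme it is subsumed by the inequality $V^{**}\le V$ and need not be done separately.

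The genuine gap is in how you propose to close the crucial step $V(0)=V^{**}(0)$. Your primary plan (prove the identity for a convolved $\mubf$ with smooth density, then pass to the limit) does not close as written: to transfer the equality you would need $\limsup_n \mathcal{D}(\chi_n*\mubf)\le\mathcal{D}(\mubf)$, i.e.\ an \emph{upper} semicontinuity-type control on the dual value along the regularization (or a construction turning near-optimal potentials for $\chi_n*\mubf$ into admissible potentials for $\mubf$, which runs into support problems near $\partial\Omega$ and requires a Jensen-type commutation of $W^*$ with the mollification). Your item (b), lower semicontinuity of the dual value as a supremum of continuous functionals, gives the inequality in the wrong direction, and combined with (a) the chain $\TEE(\mubf)\le\liminf_n\TEE(\chi_n*\mubf)=\liminf_n\mathcal{D}(\chi_n*\mubf)\ge\mathcal{D}(\mubf)$ proves nothing; moreover the ``classical Sobolev-type'' duality for smooth densities is asserted, not proved. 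By contrast, the one-sentence ``alternative route'' at the end is essentially the paper's proof and is the one you should develop: no constraint-qualification point is needed, because the value function $V$ (the paper's $\F$, defined on distributions of the form $\nabla_y\cdot\tilde J$ paired with $C^1_c(\Omega\times\R^q,\R^d)$) is proved convex and l.s.c.\ directly, exactly as in Proposition~\ref{prop:TEE_convex_lsc}: for $h_\alpha\to h$ with $V(h_\alpha)$ bounded, pick near-optimal $J_\alpha$, use the coercivity bound~\eqref{eq:coercivity_BW} to extract a weak-$\star$ limit $J$, pass to the limit in the linear constraint, and invoke the joint lower semicontinuity of $\B_W$ from Proposition~\ref{prop:dual_BW}. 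Since $V\ge 0$ is convex, lower semicontinuity (at $0$) gives $V(0)=V^{**}(0)$, and the conjugate computation you already sketched finishes the proof. What existence of an optimal tangent $J^\star$ (Lemma~\ref{lemma:existence_tangent_J}) adds is not the point; it is the compactness-plus-l.s.c.\ mechanism behind it that does the work.
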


\begin{proof}
Let us introduce $V$ the set of $\R^d$-valued distributions over $\Omega \times \R^q$ which can be written $\nabla_y \cdot J$ for a measure $J \in \M(\Omega \times \R^q)^{qd}$. We also introduce $V' = C^1_c(\Omega \times \R^q, \R^d)$ the set of $C^1$ and compactly supported functions in $\Omega \times \R^q$ and valued in $\R^d$. There is a natural non-degenerate pairing $\langle \cdot, \cdot \rangle_{V,V'}$ between $V$ and $V'$: the one between distributions and functions.

We introduce over $V$ the function $\F : V \to [0,+\infty]$ defined by 
\begin{equation*}
\F(h) = \min_J \left\{ \B_W(\mubf, J) \ : \ J \in \M(\Omega \times \R^q)^{qd} \text{ such that } \nabla_x \mubf + \nabla_y \cdot J = h  \right\},
\end{equation*}
in such a way that $\F(0) = \TEE(\mubf)$. The same arguments as in Proposition~\ref{prop:TEE_convex_lsc} prove that $\F$ is convex and l.s.c. for the $\sigma(V,V')$ topology, hence it is equal to its double Legendre transform. The Legendre transform reads, for $\varphi \in V'$,
\begin{equation*}
\F^*(\varphi) = \sup_{h \in V} \;  \langle h, \varphi \rangle_{V,V'} - \F(h) = \sup_{\tilde{J}, J \in \M(\Omega \times \R^q)^{qd}} \left\{ \langle \nabla_y \cdot \tilde{J}, \varphi \rangle_{V,V'} - \B_W(\mubf, J) \ : \  \nabla_x \mubf + \nabla_y \cdot J = \nabla_y \cdot \tilde{J} \right\},
\end{equation*}
where we just expanded the definition of $\F$ and wrote $h = \nabla_y \cdot \tilde{J}$. We can eliminate $\tilde{J}$ using the constraint on $\nabla_x \mubf + \nabla_y \cdot J = \nabla_y \cdot \tilde{J}$ and then proceed to an integration by parts:
\begin{align*}
\F^*(\varphi) &= \sup_{J \in \M(\Omega \times \R^q)^{qd}} \left\{  \int_{\Omega \times \R^q} \nabla_x \cdot \varphi \,  \ddr \mubf + \int_{\Omega \times \R^q} \nabla_y \varphi \cdot \ddr J - \B_W(\mubf, J)  \right\} \\
&=  \int_{\Omega \times \R^q} \nabla_x \cdot \varphi \,  \ddr \mubf + \sup_{J \in \M(\Omega \times \R^q)^{qd}} \left\{  \int_{\Omega \times \R^q} \nabla_y \varphi \cdot \ddr J - \B_W(\mubf, J)  \right\} \\
& = \begin{cases}  \dst{\int_{\Omega \times \R^q} \nabla_x \cdot \varphi \,  \ddr \mubf + \int_{\Omega \times \R^q} W^*(\nabla_y \varphi) \, \ddr \mubf} & \text{if } W^*(\nabla_y \varphi) < + \infty \text{ on } \Omega \times \R^q, \\
+ \infty & \text{otherwise},
\end{cases}
\end{align*}
where the last equality comes from the expression of the Legendre transform of $\B_W$ with respect to its second variable, which we obtain from~\eqref{eq:dual_BW_only_J}, see Appendix~\ref{sec:appendix_BW}. 

With the Fenchel-Moreau identity~\eqref{eq:FenchelMoreau} we get $\TEE(\mubf) = \F(0) = \F^{**}(0) = \sup_{V'} - \F^*$ which is our claim. 
\end{proof}


\begin{crl}
\label{crl:lifting_T_dyn}
Under Assumption~\ref{asmp:W}, and with $E$ defined as in Definition~\ref{defi:E_from_W}, for any $u \in L^0(\Omega,\R^q;m)$ there holds $\TEE(\mubf_u) = \widetilde{\T}_E(\mubf_u) = E(u)$.
\end{crl}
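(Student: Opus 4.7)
The proof splits into the two inequalities $\TEE(\mubf_u) \leq E(u)$ and $\TEE(\mubf_u) \geq E(u)$. Both rely centrally on the chain-rule/IBP identity: for $u \in W^{1,p}(\Omega, \R^q)$ (the BV case being analogous up to handling the singular part via $W'_\infty$) and $\varphi \in C^1_c(\Omega \times \R^q, \R^d)$, the Leibniz rule for the composition $x \mapsto \varphi_j(x, u(x))$ together with the divergence theorem (noting that $\varphi$ vanishes near $\partial \Omega$) yields
\begin{equation*}
-\int_\Omega \nabla_x \cdot \varphi(x,u(x)) \, \ddr x \; = \; \int_\Omega \nabla_y \varphi(x,u(x)) \cdot \nabla u(x) \, \ddr x.
\end{equation*}

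\textbf{Upper bound.} I would inject the identity above into the duality formula of Theorem~\ref{theo:duality_TWd}. For any admissible $\varphi$ the dual value rewrites, up to a singular contribution in the BV case controlled by the recession function $W'_\infty$, as
\begin{equation*}
\int_\Omega \bigl[\nabla_y \varphi(x,u(x)) \cdot \nabla u(x) - W^*(\nabla_y \varphi(x,u(x)))\bigr] \, \ddr x.
\end{equation*}
The Fenchel--Young inequality $r \cdot v - W^*(r) \leq W(v)$ applied pointwise bounds this integral by $E(u)$, so passing to the supremum gives $\TEE(\mubf_u) \leq E(u)$.

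\textbf{Lower bound.} The idea is to produce test functions saturating Young's inequality. Fix $r \in C^\infty_c(\Omega, \R^{qd})$ and a smooth cutoff $\chi \in C^\infty_c(\R^q)$ with $\chi \equiv 1$ on $B_2$, and for $R>0$ set
\begin{equation*}
\varphi^R_j(x,y) := \sum_i r_{ij}(x) \, y_i \, \chi(y/R), \qquad j=1,\ldots,d.
\end{equation*}
Then $\varphi^R \in C^1_c(\Omega \times \R^q, \R^d)$ with $\nabla_y \varphi^R$ uniformly bounded, so $W^*(\nabla_y \varphi^R) < +\infty$ everywhere since Assumption~\ref{asmp:W} forces $W^*$ to be finite near the origin. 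On the set $\{x : |u(x)| < 2R\}$ one has $\nabla_y \varphi^R(x,u(x)) = r(x)$, and the IBP identity above rewrites the dual value of $\varphi^R$ as
\begin{equation*}
\int_{\{|u|<2R\}} \bigl[r(x) \cdot \nabla u(x) - W^*(r(x))\bigr] \, \ddr x \; + \; \rho_R,
\end{equation*}
with $|\rho_R| \leq C_r \int_{\{|u|\geq 2R\}}(|\nabla u|+1)\, \ddr x \to 0$ by absolute continuity of the integral. Letting $R \to \infty$ and then taking the supremum over $r$ via the standard integral representation of convex integrands (using $W = (W^*)^*$ together with a measurable selection of near-maximizers, mollified and cut off) recovers $\int_\Omega W(\nabla u) \, \ddr x = E(u)$.

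\textbf{Main obstacle.} The technically delicate step is the $p=1$ BV case, where $\nabla u$ carries a nontrivial singular part $\nabla^s u$ and the chain rule for $\varphi(x,u(x))$ must be supplemented by a jump/Cantor contribution, matched against the recession term in $\B_W$. Everything else --- the Sobolev chain rule, the tail bound on $\rho_R$, and the Rockafellar-type integral representation --- is standard once Assumption~\ref{asmp:W} is in place.
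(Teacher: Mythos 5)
Your upper bound is fine for $p>1$, but the lower bound has a real gap: the corollary is asserted for \emph{every} $u \in L^0(\Omega,\R^q;m)$, and the substantive half of the inequality $E(u)\leq \TEE(\mubf_u)$ is precisely the case where $u$ is not known to lie in $W^{1,p}$ (resp.\ $\mathrm{BV}$), where one must show that $\TEE(\mubf_u)<+\infty$ forces this regularity. Your argument presupposes it from the outset: the chain-rule/IBP identity and the tail estimate $|\rho_R|\leq C_r\int_{\{|u|\geq 2R\}}(|\nabla u|+1)\,\ddr x$ are meaningless unless $\nabla u$ is already an $L^1$ function (or a finite measure). So as written you only prove the lower bound for $u$ already Sobolev/BV. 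The paper closes exactly this gap before any integration by parts: it tests the duality formula of Theorem~\ref{theo:duality_TWd} with $\varphi(x,y)=\psi(x)^\top y$ and uses that $W^*$ is finite on a neighborhood of $0$ to bound $\sup\{-\int_\Omega(\nabla_x\cdot\psi)\cdot u \,\ddr x \ : \ \sup|\psi| \text{ small}\}$, i.e.\ the total variation of the distributional derivative of $u$, in terms of $\TEE(\mubf_u)$; only then does it integrate by parts and take the supremum (quoting a classical convex-duality representation, the ``Rockafellar-type'' step you invoke). You need an analogous step.

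A second, concrete problem is the admissibility of $\varphi^R$ when $p=1$: ``$\nabla_y\varphi^R$ uniformly bounded'' does not give $W^*(\nabla_y\varphi^R)<+\infty$, because Assumption~\ref{asmp:W} only guarantees $W^*$ finite \emph{near the origin} (for $W$ a norm, $W^*$ is the convex indicator of the dual unit ball). Moreover the cutoff error in $\nabla_y\varphi^R$, namely $\sum_k r_{kj}(x)\,y_k\,\partial_{y_i}\chi(y/R)/R$, is of size comparable to $\|r\|_\infty$ on the annulus where $\nabla\chi(\cdot/R)\neq 0$ (there $|y|\sim R$), so it does \emph{not} vanish as $R\to\infty$; for $r$ with values near the boundary of $\mathrm{dom}\,W^*$ the test function is inadmissible for every $R$. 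This is repairable (restrict to $r$ valued well inside $\mathrm{dom}\,W^*$, or use a slowly varying cutoff with $|y|\,|\nabla\chi_R(y)|\to 0$), but not with the construction as stated. Finally, note that the paper proves the upper bound in the \emph{primal} formulation, exhibiting the competitor $J_u$ (the push-forward of $\nabla u$ by $x\mapsto(x,u(x))$) and computing $\B_W(\mubf_u,J_u)=E(u)$ directly from Definition~\ref{def:energyBW}; this handles the singular part of a BV map by construction, which is exactly the point you flag as your ``main obstacle'' in the dual route and leave unresolved.
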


\begin{proof}
First assume that $u \in L^0(\Omega,\R^q;m)$ is such that $E(u) < + \infty$. We use the ``primal'' formulation, that is, Definition~\ref{defi:TEE}. Calling $\nabla u$ the distributional Jacobian matrix of $u$ (which is at least a measure), we define $J_u \in \M(\Omega \times \R^q)^{qd}$ via its action on test functions $\varphi \in C_b(\Omega \times \R^q, \R^{qd})$: 
\begin{equation*}
\int_{\Omega \times \R^q} \varphi \cdot \ddr J = \int_\Omega \varphi(x,u(x)) \cdot (\nabla u) (\ddr x).
\end{equation*}
With this definition, it is now standard to check that $\nabla_x \mubf_u + \nabla_y \cdot J_u = 0$ in the weak sense: we leave it as an exercise to the reader, and we refer to~\cite[Proposition 5.2]{Lavenant2019} where we did it. Moreover, writing $\nabla u = (\nabla^a u) m + \nabla^s u$ the Radon-Nikodym decomposition of the (distributional) Jacobian matrix of $u$ with respect to the Lebesgue measure $m$, we obtain the Radon-Nikodym decomposition of $J_u$ with respect to $\mubf_u$: with the notations of Definition~\ref{def:energyBW}, we have $v(x,u(x)) = \nabla^a u(x)$ while $J^\perp$ is the pushforward of $\nabla^s u$ by the map $x \mapsto (x,u(x))$. Thus following the definitions we get $\B_W(\mubf_u, J_u) = E(u)$. We conclude to our first inequality $\TEE(\mubf_u) \leq \B_W(\mubf_u, J_u) = E(u)$.

We then proceed to prove the converse inequality, and for that we rely on Theorem~\ref{theo:duality_TWd}. We take $u \in L^0(\Omega, \R^q;m)$ with $\TEE(\mubf_u) < + \infty$. We restrict to test functions $\varphi(x,y) = \psi(x)^\top y$ where $y \in \R^q$ and $\psi : \Omega \to \R^{qd}$ is a $C^1$ and compactly supported $q \times d$ matrix-valued field, so that $\psi(x)^\top y$ is a matrix vector product between the transpose of $\psi$ and $y \in \R^q$. Theorem~\ref{theo:duality_TWd} yields that for any such $\psi$ 
\begin{equation*}
- \int_\Omega (\nabla_x \cdot \psi(x) ) \cdot u(x) \, \ddr x - \int_\Omega W^*(\psi(x)) \, \ddr x \leq  \TEE(\mubf_u).  
\end{equation*}  
As $W^*$ finite in a neighborhood of $0$ we see that $- \int_\Omega (\nabla_x \cdot \psi) \cdot u$ is finite if $\sup |\psi|$ is small enough, thus we deduce that the weak derivative of $u$ in the sense of distributions is at least a measure. Proceeding with an integration by parts, we obtain that for any $\psi$ as above,
\begin{equation*}
\int_\Omega \psi(x) \cdot (\nabla u)(\ddr x)  - \int_\Omega W^*(\psi(x)) \, \ddr x \leq  \TEE(\mubf_u). 
\end{equation*}
Taking the supremum in $\psi$ in the left hand side yields $E(u)$, this is classical result of convex analysis that can be found for instance in the proof~\cite[Proposition 7.7]{SantambrogioOTAM}. Thus we obtain the other inequality $E(u) \leq \TEE(\mubf_u)$.
\end{proof}

\begin{rmk}
\label{rmk:lifting_TV_dual}
An important case is when $W$ is an operator norm on the space of matrices, and it yields a TV penalization. Specifically, let $N_{\R^d}$ and $N_{\R^q}$ be two norms on respectively $\R^d$ and $\R^q$. For a norm $N$ on a Euclidean space we denote by $N^*$ its dual norm, defined as $N^*(a) = \sup_{b} \{ a \cdot b  \ :  \ N(b) \leq 1 \}$, being $a \cdot b$ the canonical scalar product between vectors. The dual of the dual norm is the norm itself. 
We denote by $\| \cdot \|_{N_{\R^d} \to N_{\R^q}}$ the operator norm on $q \times d$ matrices:
\begin{equation*}
\| a \|_{N_{\R^d} \to N_{\R^q}} = \sup_{b \in \R^d} \{ N_{\R^q}(ab) \ : \ N_{\R^d}(b) \leq 1 \}.
\end{equation*}
One can check that $\| a^\top \|_{N^*_{\R^q} \to N^*_{\R^d}} = \| a \|_{N_{\R^d} \to N_{\R^q}}$ for any $q \times d$ matrix.
Taking $W = \| \cdot \|_{N_{\R^d} \to N_{\R^q}}$, which satisfies Assumption~\ref{asmp:W} with $p = 1$, a direct computation yields
\begin{equation*}
W^*(r) = \begin{cases}
0 & \text{if } \| r \|_{N^*_{\R^d} \to N^*_{\R^q}} \leq 1, \\
+ \infty & \text{otherwise}. 
\end{cases}
\end{equation*}
Thus in this context Theorem~\ref{theo:duality_TWd} implies
\begin{equation}
\label{eq:dual_TEE_norm}
\T_{E, \mathrm{Eul}} (\mubf) = \sup_{\varphi} \left\{ - \int_{\Omega \times \R^q} \nabla_x \cdot \varphi \, \ddr \mubf \ : \ \varphi \in C^1_c(\Omega \times \R^q, \R^d) \text{ and } \forall x \in \Omega, \, \varphi(x,\cdot) \text{ is } 1 \text{-Lipschitz} \right\},
\end{equation}
where $1$-Lipschitz is understood when putting $N_{\R^q}$ on the domain $\R^q$ and $N_{\R^d}$ on the codomain $\R^d$. Indeed thanks to the computations above, and given that $\nabla_y \varphi$ is understood as the \emph{transpose} of the Jacobian matrix of $\varphi$ (see Remark~\ref{rmk:transpose_varphi}), it is equivalent to requiring $W^*(\nabla_y \varphi) < + \infty$ everywhere on $\Omega \times \R^q$. 
The formulation~\eqref{eq:dual_TEE_norm} is how the Eulerian lifting was actually defined in the works~\cite{laude2016sublabel, Vogt2018, vogt2019lifting}.
\end{rmk}

\begin{rmk}
In the case $W$ is the squared Froebenius norm and the measure $\mubf$ is supported in $\Omega \times D$ for $D \subseteq \R^q$ convex and bounded the functional $\TEE$ has a metric counterpart: it corresponds to the Dirichlet energy in the sense of Korevaar, Schoen and Jost~\cite{Korevaar1993,Jost1994} of the map $x \mapsto (\mubf_x)_{x \in X}$ seen as a map valued in the metric space $\Prob(D)$ endowed with the quadratic Wasserstein distance~\cite[Section 3.4]{Lavenant2019}. This can be read as the generalization of formula~\eqref{eq:intro_metric_curve} which holds for measure-valued curves. Moreover, in this case it is possible to make sense of the trace of the measure-valued map on the boundary of $\Omega$, provided the latter is e.g. Lipschitz. 
\end{rmk}

\section{On the difference between the Lagrangian and the Eulerian lifting}
\label{sec:difference_liftings}

In this section we still take $X = \Omega$ an open bounded subset of $\R^d$ with $d \geq 1$, while $Y = \R^q$ is a Euclidean space. We take $m$ to be the Lebesgue measure restricted to $\Omega$, and we denote integration with respect to it by $\ddr x$. As in the previous section we focus on functionals $E : u \mapsto \int_\Omega W(\nabla u)$ for $W$ convex.

We now have two liftings, $\T_E$ and $\TEE$, respectively the Lagrangian and the Eulerian one, and we know that the former is the convex and l.s.c. envelope of $\widetilde{\T}_E$, see Theorem~\ref{theo:multimarginalOT_lifting}.
In particular this theorem implies $\TEE \leq \T_E$ thanks to Proposition~\ref{prop:TEE_convex_lsc} and Corollary~\ref{crl:lifting_T_dyn}.
In this section, we want to explain that the inequality is strict, and provide a characterization of $\TEE$. A key characteristic of functionals $E$ of the type~\eqref{eq:intro_energy_generic} is that they are \emph{local} and can be seen as \emph{measures}, that is, they decompose additively when restricted to disjoint open sets. Generically, this property is not preserved by the lifting $\T_E$, while on the other hand $\TEE$ is the largest lifting also preserving this property. 

We will first discuss how to formalize the dependence of functionals on sets, borrowing for that purpose concepts, definitions and results coming from the literature on $\Gamma$-convergence~\cite[Chapters 14 and 15]{dalMaso2012introduction}. Next, we provide examples (already present in our previous work~\cite{Lavenant2019}) justifying that $\T_E$ is not additive. Eventually, we characterize $\TEE$ as the convex, l.s.c. and subadditive envelope.

\subsection{Localization of functionals}
\label{sec:localization}

We fix $\Omega$ an open subset of $\R^d$. We denote by $\A(\Omega)$ the set of open subset of $\Omega$. We first recall some definitions, in case we have a functional $F : Z \times \A(\Omega) \to [0, + \infty]$, which depends both on a function space $Z$ (generically a Polish space) and on a open set in $\A(\Omega)$.

\begin{defi}
\label{defi:localized_functional}
A functional $F : Z \times \A(\Omega) \to [0, + \infty]$ is said:
\begin{enumerate}[label=(\roman*)]
\item \emph{lower semi-continuous} if $Z$ is a topological space and for every $A \in \A(\Omega)$, the function $z \in Z \mapsto F(z,A)$ is lower semi-continuous,
\item \emph{convex} if $Z$ is a convex subset of a vector space and if for every $A \in \A(\Omega)$, the function $z \in Z \mapsto F(z,A)$ is convex,
\item \emph{local} if $Z$ is a subset of $L^0(\Omega,\R^q;m)$, and $u=v$ a.e. on $A \in \A(\Omega)$ implies $F(u,A) = F(v,A)$,
\item \emph{increasing} if $F(z, A_1) \leq F(z,A_2)$ for $z \in Z$ and $A_1 \subseteq A_2$ open sets of $\Omega$,
\item \emph{inner regular} if for every $z \in Z$ and every $A \in \A(\Omega)$ there holds
\begin{equation*}
F(z,A) = \sup_{B} \left\{ F(z,B) \ : \ B \in \A(\Omega), \; B \text{ compactly included in } A \right\},
\end{equation*}
\item \emph{subadditive} if $F(z, A_1 \cup A_2) \leq F(z,A_1)+F(z, A_2)$ for $z \in Z$ and $A_1, A_2$ open sets of $\Omega$,
\item \emph{superadditive} if $F(z, A_1 \cup A_2) \geq F(z,A_1)+F(z, A_2)$ for $z \in Z$ and $A_1, A_2$ disjoint open sets of $\Omega$,  
\item a \emph{measure} if for every $z \in Z$ the set function $A \mapsto F(z,A)$ is the restriction to $\A(\Omega)$ of a regular Borel measure on the Borel $\sigma$-algebra of $\Omega$.
\end{enumerate}
\end{defi}

\noindent The first two properties are only concerned with the (classical) functional $F(\cdot,A)$ for a fixed $A$. Starting from the fourth one, all properties are only concerned with the set function $F(z,\cdot)$ for fixed $z$. Only the third one, locality, displays an interplay between the two arguments of the function $F$.

Note that a measure is always subadditive, superadditive, increasing and inner regular. A celebrated result \cite[Theorem 14.23]{dalMaso2012introduction} guarantees that an increasing functional which is subadditive, superadditive and inner regular is in fact a measure.

\begin{example}
\label{ex:localization_E}
Let $W : \R^{qd} \to [0, + \infty)$ a convex function satisfying Assumption~\ref{asmp:W}. We define for $u \in L^0(\Omega,\R^q;m)$ and $A \in \A(\Omega)$, 
\begin{equation*}
E(u,A) = \begin{cases}
\dst{\int_{A}} W(\nabla u(x)) \, \ddr x & \text{if } u \in W^{1,p}(A, \R^q), \\
+ \infty & \text{otherwise}
\end{cases}
\end{equation*}
in the case $p > 1$, with the natural extension (following Definition~\ref{defi:E_from_W}) in the case $p=1$. Then this function is convex, l.s.c. and a measure, see~\cite[Example 15.4]{dalMaso2012introduction}.
\end{example}  

We then want to lift such localized functionals. We apply the same localization procedure and we see the lifted functional as defined over the product space $\Mm(\Omega \times \R^q) \times \A(\Omega)$. All points of Definition~\ref{defi:localized_functional} carry through, expect the one on locality. We can adapt easily by saying that $\mathcal{T} : \Mm(\Omega \times \R^q) \times \A(\Omega) \to [0,+\infty]$ is local if for every $\mubf, \nubf$ measures on $\Mm(\Omega \times \R^q)$ and for every open set $A \in \A(\Omega)$, if $\mubf = \nubf$ when restricted to $A \times \R^q$ then $\mathcal{T}(\mubf,A) = \mathcal{T}(\nubf,A)$.

\paragraph{Localized lifting $\T_E$}

There is a clear way to adapt Definition~\ref{definition:MMOT_infinity} to lift  a localized functional $E$ defined on $L^0(\Omega,\R^q;m) \times \A(\Omega)$ into a functional $\T_E$ on $\Mm(\Omega \times \R^q) \times \A(\Omega)$: for $A \in \A(\Omega)$ and $\mubf \in \Mm(\Omega \times \R^d)$ we define 
\begin{equation*}
\T_E( \mubf, A) = \T_{E(\cdot, A)}(\mubf) = \inf_{Q } \left\{ \int_{L^0(\Omega,\R^q;m)} E(u,A) \, Q(\ddr u)  \ : \ Q \in \Prob(L^0(\Omega,\R^q;m)) \text{ such that } Q \in \Pi(\mubf) \right\},
\end{equation*}
that is, we lift $E(\cdot, A)$ into $\T_E(\cdot, A)$, for any fixed $A \in \A(\Omega)$. If $E$ is local we can derive an alternative expression. 

\begin{lm}
\label{lm:alternate_representation_lifting_TE}
Assume $E : L^0(\Omega, \R^q;m) \times \A(\Omega) \to [0,+\infty]$ is local. Then, for any set $A \in \A(\Omega)$ and any $\mubf \in \Mm(\Omega \times \R^q)$, calling $\mubf |_A$ the restriction of $\mubf$ to $A \times \R^q$,
\begin{equation}
\label{eq:alternate_representation_lifting_TE}
\T_E( \mubf, A) = \inf_{\tilde{Q}} \left\{ \int_{L^0(A,\R^q;m)} E(u,A) \, \tilde{Q}(\ddr u)  \ : \ \tilde{Q} \in \Prob(L^0(A,\R^q;m)) \text{ such that } \tilde{Q} \in \Pi(\mubf |_A) \right\}.
\end{equation}
\end{lm}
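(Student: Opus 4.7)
The plan is to prove the two inequalities separately, using locality of $E$ as the key ingredient that lets one translate between transport plans on $L^0(\Omega, \R^q; m)$ and on $L^0(A, \R^q; m|_A)$.

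For $\T_E(\mubf, A) \geq $ right-hand side, I would take an arbitrary $Q \in \Pi(\mubf)$ and push it forward through the restriction map $r_A : u \mapsto u|_A$, which sends $L^0(\Omega, \R^q; m)$ continuously into $L^0(A, \R^q; m|_A)$. The pushforward $\tilde Q := (r_A) \# Q$ is then a probability measure, and one checks $\tilde Q \in \Pi(\mubf|_A)$ by testing Definition~\ref{def:marginal_Q} against $\varphi \in C_b(A \times \R^q)$ extended by zero to $\Omega \times \R^q$. Locality yields $E(u, A) = E(u|_A, A)$ (the right-hand side being well-defined on $L^0(A, \R^q; m|_A)$ via any measurable extension, independently of the choice by locality), so the two cost integrals agree and the infimum inequality follows.

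For the opposite inequality I would construct, given $\tilde Q \in \Pi(\mubf|_A)$, a matching $Q \in \Pi(\mubf)$ with the same cost by a gluing argument. Invoking Proposition~\ref{prop:Pimu_non_empty} on the complement yields some $Q' \in \Pi(\mubf|_{\Omega \setminus A})$, and the map $(v_1, v_2) \mapsto v_1 \1_A + v_2 \1_{\Omega \setminus A}$ defines a continuous injection of $L^0(A, \R^q; m|_A) \times L^0(\Omega \setminus A, \R^q; m|_{\Omega \setminus A})$ into $L^0(\Omega, \R^q; m)$. I set $Q$ to be the pushforward of the product measure $\tilde Q \otimes Q'$ under this gluing; splitting integrals along $A$ and $\Omega \setminus A$ shows $Q \in \Pi(\mubf)$, while locality makes $E(\cdot, A)$ depend only on $v_1$, so the costs of $Q$ and $\tilde Q$ agree and the infimum inequality follows.

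I expect the only real obstacle to be the routine bookkeeping in checking the marginal identities under pushforward and gluing, which just amounts to unfolding Definition~\ref{def:marginal_Q} and applying Fubini. The conceptual content is clean: locality decouples $E(\cdot, A)$ from values outside $A$, and Proposition~\ref{prop:Pimu_non_empty} supplies the filler plan needed on the complement.
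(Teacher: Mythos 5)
Your proposal is correct and follows essentially the same route as the paper: one inequality via the pushforward of $Q$ under the restriction operator to $A$, the other via gluing $\tilde{Q}$ with an arbitrary plan on $\Omega \setminus A$ supplied by Proposition~\ref{prop:Pimu_non_empty} (the paper writes this product/independent coupling as $\tilde{Q} \otimes \hat{Q}$), with locality of $E$ equating the costs in both directions. No substantive difference from the paper's argument.
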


\begin{proof}
The left hand side is always larger then the right hand side as, for any $Q \in \Pi(\mubf)$, we can look at $\tilde{Q} = e_A \# Q$, being $e_A : L^0(\Omega,\R^q;m) \to L^0(A,\R^q;m)$ the restriction operator to $A$. It gives $\tilde{Q} \in \Pi(\mubf |_A)$, and $\int E(\cdot,A) \, \ddr Q = \int E(\cdot,A) \, \ddr \tilde{Q}$ by locality of $E$. 

On the other hand, consider $\tilde{Q} \in \Prob(L^0(A,\R^q;m))$ such that $\tilde{Q} \in \Pi(\mubf |_A)$. We write $A^c = \Omega \setminus A$, and $\mubf|_{A^c}$ for the restriction of $\mubf$ to $A^c \times \R^q$. Taking any $\hat{Q} \in \Prob(L^0(A^c, \R^q;m))$ such that $\hat{Q} \in \Pi(\mubf|_{A^c})$ (the latter is not empty, see Proposition~\ref{prop:Pimu_non_empty}), we can build $Q = \tilde{Q} \otimes \hat{Q} \in \Prob(L^0(\Omega,\R^q;m))$: that is, a $u$ drawn according to $Q$ is such that the restriction of $u$ to $A$ (resp. $A^c$) follows $\tilde{Q}$ (resp. $\hat{Q}$), and the values on $A$ and $A^c$ are independent. The probability measure $Q$ satisfies $Q \in \Pi(\mubf)$ and $\int E(\cdot,A) \, \ddr Q = \int E(\cdot,A) \, \ddr \tilde{Q}$, again by locality of $E$. That gives us that the left hand side is always smaller then the right hand side in~\eqref{eq:alternate_representation_lifting_TE}. 
\end{proof}

We collect the properties that the lifting $\T_E$ inherits from the functional $E$ in the following proposition. 

\begin{prop}
\label{prop:lifting_TE_properties}
Let $E : L^0(\Omega,\R^q;m) \times \A(\Omega) \to [0, + \infty]$ a localized functional. The localized lifting $\T_E : \Mm(\Omega \times \R^q) \times \A(\Omega) \to [0,+\infty]$ satisfies the following properties.
\begin{enumerate}[label=(\roman*)]
\item It is convex and satisfies $\T_E(\mubf_u,A) = E(u,A)$ for any $u \in L^0(\Omega,\R^q;m)$ and any $A \in \A(\Omega)$.
\item If functional $E$ is local, then so is $\mathcal{T}_E$.
\item If functional $E$ is local and for any $A \in \A(\Omega)$, the functional $E(\cdot, A)$ when restricted to $L^0(A,\R^q;m)$ satisfies Assumption~\ref{asmp:reg_XYm_E}, then $\T_E$ is l.s.c.
\item If the functional $E$ is superadditive, then so is $\mathcal{T}_E$.
\end{enumerate} 
\end{prop}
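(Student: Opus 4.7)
The plan is to address the four properties in increasing order of difficulty, each time reducing to results already established in Section~\ref{sec:generalMMOT} for the unlocalized lifting.

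For \textbf{(i)}, both the convexity of $\T_E(\cdot, A)$ and the identity $\T_E(\mubf_u, A) = E(u,A)$ are immediate consequences of Lemma~\ref{lm:basic_energy_TE} applied to the functional $E(\cdot, A)$ for each fixed $A \in \A(\Omega)$. For \textbf{(ii)}, if $\mubf, \nubf \in \Mm(\Omega \times \R^q)$ coincide on $A \times \R^q$, then $\mubf|_A = \nubf|_A$, and the alternative representation of Lemma~\ref{lm:alternate_representation_lifting_TE} directly yields $\T_E(\mubf, A) = \T_E(\nubf, A)$. For \textbf{(iv)}, for any $Q \in \Pi(\mubf)$ and disjoint $A_1, A_2 \in \A(\Omega)$, the pointwise superadditivity of $E$ gives
\begin{equation*}
\int E(u, A_1 \cup A_2) \, Q(\ddr u) \geq \int E(u, A_1) \, Q(\ddr u) + \int E(u, A_2) \, Q(\ddr u) \geq \T_E(\mubf, A_1) + \T_E(\mubf, A_2),
\end{equation*}
and taking the infimum on the left hand side completes the argument.

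\textbf{Property (iii)} is the main obstacle. The strategy is to combine Lemma~\ref{lm:alternate_representation_lifting_TE}, which rewrites $\T_E(\mubf, A) = \T_{E(\cdot, A)}(\mubf|_A)$ as the lifting of $E(\cdot, A)$ viewed as a functional on $L^0(A, \R^q; m|_A)$ (well-defined by locality of $E$), with Proposition~\ref{prop:lsc_energy_TE} applied on the Polish space $A$ endowed with $m|_A$. The latter yields that $\T_{E(\cdot, A)}$ is l.s.c. on the space of measures on $A \times \R^q$ with first marginal $m|_A$. It therefore suffices to check that the restriction map $\mubf \mapsto \mubf|_A$ is continuous from $\Mm(\Omega \times \R^q)$ into that space, both endowed with the topology of narrow convergence.

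To establish this continuity, take $\mubf_n \to \mubf$ narrowly in $\Mm(\Omega \times \R^q)$. For every $\psi \in C_c(A \times \R^q)$, its extension $\bar\psi$ by zero to $\Omega \times \R^q$ is continuous and bounded, since $\supp(\psi)$ is a compact subset of the open set $A \times \R^q$ and therefore at positive distance from $(\Omega \setminus A) \times \R^q$. Hence
\begin{equation*}
\int \psi \, \ddr \mubf_n|_A = \int \bar\psi \, \ddr \mubf_n \longrightarrow \int \bar\psi \, \ddr \mubf = \int \psi \, \ddr \mubf|_A,
\end{equation*}
giving weak-$\star$ convergence of $\mubf_n|_A$ to $\mubf|_A$ and uniqueness of any narrow limit. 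Since all first marginals coincide with $m$, the total masses are fixed: $\mubf_n|_A(A \times \R^q) = m(A) = \mubf|_A(A \times \R^q)$. Tightness of $\{\mubf_n|_A\}$ follows from tightness of both marginals---the first is the constant measure $m|_A$, and the second is the pushforward on $\R^q$ of the narrowly convergent (hence tight) family $\{\mubf_n\}$. Prokhorov's theorem combined with uniqueness of the limit then upgrades weak-$\star$ to narrow convergence, which concludes the argument.
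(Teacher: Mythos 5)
Your proposal is correct and follows essentially the same route as the paper: (i) via Lemma~\ref{lm:basic_energy_TE} applied to $E(\cdot,A)$, (ii) and (iii) via the alternative representation of Lemma~\ref{lm:alternate_representation_lifting_TE} combined with Proposition~\ref{prop:lsc_energy_TE}, and (iv) by the same superadditivity computation. The only difference is that you spell out the narrow continuity of the restriction map $\mubf \mapsto \mubf|_A$ on $\Mm(\Omega \times \R^q)$ (which holds precisely because the first marginal is pinned to $m$), a step the paper's terse proof of (iii) leaves implicit; your verification of it is sound.
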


\noindent Note that, for the functionals $E$ of Example~\ref{ex:localization_E}, we obtain that $\T_E$ is convex, l.s.c., local, superadditive and satisfies the lifting identity.

\begin{proof}
Point (i) is the same as Lemma~\ref{lm:basic_energy_TE}.

Point (ii) is a direct consequence of~\eqref{eq:alternate_representation_lifting_TE}.

Point (iii) follows from~\eqref{eq:alternate_representation_lifting_TE} together with Proposition~\ref{prop:lsc_energy_TE}. 

Eventually for (iv), if $Q \in \Pi(\mubf)$ and $A_1$, $A_2$ are disjoint open sets in $\Omega$, 
\begin{align*}
\int_{L^0(\Omega,\R^q;m)} E(u,A_1 \cup A_2) \, Q(\ddr u) & \geq \int_{L^0(\Omega,\R^q;m)} E(u,A_1 ) \, Q(\ddr u) + \int_{L^0(\Omega,\R^q;m)} E(u,A_2) \, Q(\ddr u) \\
& \geq \T_E(\mubf,A_1) + \T_E(\mubf, A_2),
\end{align*}
where we first used superadditivity of $E$ and then the definition of $\T_E$. Taking the infimum over $Q$ yields the conclusion.
\end{proof}

As we will see in the sequel, even if the functional $E$ is a measure, it is \emph{not} guaranteed for $\T_E$ to be a measure and the best we can conclude is superadditivity. 

\paragraph{Localized lifting $\TEE$}

When we take $E$ as in Example~\ref{ex:localization_E} the localized version of $\TEE$ is very easy build too. Following Definition~\ref{defi:TEE}, we set for $\mubf \in \Mm(\Omega \times \R^q)$ and $A \in \A(\Omega)$,
\begin{equation}
\label{eq:TEE_localized}
\TEE(\mubf,A) = \min_J \left\{ \B_W(\mubf, J,A) \ : \ J \in \M(A \times \R^q)^{qd} \text{ such that } \nabla_x \mubf + \nabla_y \cdot J = 0   \text{ in } A \times \R^q \right\},
\end{equation} 
being $\B_W(\mubf, J,A)$ the localized version of $\B_W$, following the notations of Definition~\ref{def:energyBW}:
\begin{equation*}
\B_W(\mubf, J,A) = \int_{A \times \R^q} W(v(x,y)) \, \mubf(\ddr x, \ddr y) + \int_{A \times \R^q}  W'_\infty \left( \frac{\ddr J^\perp}{\ddr |J^\perp|} \right) \, \ddr |J^\perp|.
\end{equation*}

\begin{prop}
\label{prop:TEE_localized_lsc_convex_local}
The function $\TEE$ is convex, l.s.c., local and a measure over $\Mm(\Omega \times \R^q) \times \A(\Omega)$. It satisfies $\T_E(\mubf_u,A) = E(u,A)$ for any $u \in L^0(\Omega,\R^q;m)$ and any $A \in \A(\Omega)$. 
\end{prop}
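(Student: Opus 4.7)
I would deduce convexity, lower semi-continuity and the lifting identity $\TEE(\mubf_u,A)=E(u,A)$ by invoking Proposition~\ref{prop:TEE_convex_lsc} and Corollary~\ref{crl:lifting_T_dyn} with $A$ in place of $\Omega$, since the localized definition~\eqref{eq:TEE_localized} has exactly the same structure. Locality is immediate from~\eqref{eq:TEE_localized}: both the cost and the continuity equation only involve the restriction of $\mubf$ to $A\times\R^q$.

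For the measure property I would appeal to the De Giorgi--Letta criterion recalled after Definition~\ref{defi:localized_functional}: it suffices to verify that $A\mapsto\TEE(\mubf,A)$ is increasing, superadditive, subadditive and inner regular. Monotonicity and superadditivity both follow from a straightforward restriction argument: given an optimal $J$ tangent on the larger set (provided by Lemma~\ref{lemma:existence_tangent_J}), its restriction to any smaller open set remains admissible, and $\B_W$ decomposes additively across disjoint subsets while being monotone in the set since the integrand is non-negative.

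The crux is subadditivity. Given $A_1,A_2\in\A(\Omega)$, let $J_1,J_2$ be tangent measures on $A_1,A_2$ respectively, and pick a smooth partition of unity $(\eta_1,\eta_2)$ of $A_1\cup A_2$ subordinate to the cover $\{A_1,A_2\}$, depending only on $x$. Set $J := \eta_1 J_1 + \eta_2 J_2$. Since $\eta_i$ is independent of $y$,
\begin{equation*}
\nabla_y\cdot J = \eta_1\nabla_y\cdot J_1 + \eta_2\nabla_y\cdot J_2 = -(\eta_1+\eta_2)\nabla_x\mubf = -\nabla_x\mubf
\end{equation*}
on $A_1\cup A_2$, so $J$ is admissible for $\TEE(\mubf,A_1\cup A_2)$. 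The cost bound then follows from the joint convexity and $1$-homogeneity of $\B_W$ (Proposition~\ref{prop:dual_BW}), which together give sublinearity: writing $\mubf=\eta_1\mubf+\eta_2\mubf$ on $A_1\cup A_2$,
\begin{equation*}
\B_W(\mubf,J,A_1\cup A_2)\le \B_W(\eta_1\mubf,\eta_1 J_1,A_1)+\B_W(\eta_2\mubf,\eta_2 J_2,A_2)\le \B_W(\mubf,J_1,A_1)+\B_W(\mubf,J_2,A_2),
\end{equation*}
the last step using that the integrands in $\B_W$ are non-negative and $\eta_i\le 1$. This step is the main obstacle, as one must simultaneously preserve the continuity equation (which the $x$-only partition of unity achieves by passing through $\nabla_y$) and exploit convexity at the level of both $W$ and its recession function $W'_\infty$ through the joint sublinearity of $\B_W$.

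For inner regularity, observe that if $J$ is optimal on $A$, then $B\mapsto\B_W(\mubf,J,B)$ is itself a Borel measure on $A$ (being the sum of the non-negative measures $W(v)\mubf$ and $W'_\infty(\cdot)|J^\perp|$ pushed forward onto $\Omega$), hence inner regular. Since the restriction of $J$ to any open $B$ with $B\subset\subset A$ is admissible for $\TEE(\mubf,B)$, we get $\TEE(\mubf,B)\le\B_W(\mubf,J,B)$, and taking the supremum yields $\sup_{B\subset\subset A}\TEE(\mubf,B)\ge\B_W(\mubf,J,A)=\TEE(\mubf,A)$; the reverse inequality is monotonicity.
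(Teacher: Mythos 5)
Most of your argument is sound, and your subadditivity step is genuinely different from the paper's: you glue admissible fields on $A_1$ and $A_2$ with an $x$-dependent partition of unity (which preserves the constraint since $\nabla_y\eta_i=0$) and then use the joint sublinearity of $\B_W$, whereas the paper instead proves that the restriction of a tangent field on a larger set is still tangent on any smaller open set, and reads subadditivity, superadditivity, monotonicity and inner regularity off the Borel measure $B\mapsto \B_W(\mubf,J,B)$ for a single tangent $J$. Your gluing has the advantage of also covering the case where $\TEE(\mubf,A_1\cup A_2)$ is not known a priori to be finite (so that no tangent field on the union is available), a case the paper's one-line deduction glosses over.

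However, your inner-regularity step contains a genuine gap. From the restriction of an optimal $J$ being admissible on $B\subset\subset A$ you get $\TEE(\mubf,B)\leq \B_W(\mubf,J,B)$, and an upper bound on each term of a supremum can only give $\sup_{B\subset\subset A}\TEE(\mubf,B)\leq \sup_{B\subset\subset A}\B_W(\mubf,J,B)=\B_W(\mubf,J,A)=\TEE(\mubf,A)$; the inequality ``$\sup_B\TEE(\mubf,B)\geq \B_W(\mubf,J,A)$'' you wrote does not follow, so you have only re-proved the trivial (monotonicity) direction. The direction actually needed, $\TEE(\mubf,A)\leq \sup_{B\subset\subset A}\TEE(\mubf,B)$, requires additional input: either the \emph{equality} $\TEE(\mubf,B)=\B_W(\mubf,J,B)$ for the restriction of a tangent $J$ — this is precisely the paper's key claim, proved by a gluing/contradiction argument, and it is not free; or an exhaustion argument: take open sets $B_n\uparrow A$ compactly included in $A$, tangent fields $J_n$ on $B_n$ with $\B_W(\mubf,J_n,B_n)\leq \sup_B\TEE(\mubf,B)$, use the coercivity bound~\eqref{eq:coercivity_BW} to extract a weak-$\star$ limit $J$, check it is admissible on $A\times\R^q$, and use lower semi-continuity of $\B_W$ on each $B_m$ to conclude $\B_W(\mubf,J,A)\leq\sup_B\TEE(\mubf,B)$; or else the localized dual formula of Theorem~\ref{theo:duality_TWd} on $A$, noting that every admissible test function is compactly supported in $A\times\R^q$ so its value is recovered, up to an error controlled by $m(A\setminus B)$, from some $B\subset\subset A$. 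Note also that your argument presupposes $\TEE(\mubf,A)<+\infty$ to have an optimal $J$ at all; the infinite case is exactly the nontrivial direction again. Since the De Giorgi--Letta criterion you invoke needs inner regularity, the measure property is not established until this step is repaired.
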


\begin{proof}
Convexity and lower semi-continuity of $\TEE$ are already proved in Proposition~\ref{prop:TEE_convex_lsc}. The identity $\T_E(\mubf_u,A) = E(u,A)$ comes from Corollary~\ref{crl:lifting_T_dyn}. Locality is immediate given our definition.

Then we claim the following: for $A_1 \subseteq A_2$ open sets of $\Omega$, if $J$ is tangent for $\mubf$ on $A_2$, then its restriction to $A_1$ is tangent for $\TEE(\mubf,A_1)$. Indeed, assume that the restriction of $J$ to $A_1$ is not optimal, that is, $\TEE(\mubf, A_1) < \B_W(\mubf,J,A_1)$. Take $\tilde{J}$ in $\M(A_1 \times \R^q)^{qd}$ which is tangent for $\mubf$ on $A_1$. Then the competitor $ \tilde{J} \1_{A_1} + J \1_{A_2 \setminus A_1}$ still satisfies the generalized continuity equation over $A_2 \times \R^q$ while
\begin{equation*}
\B_W(\mubf, \tilde{J} \1_{A_1} + J \1_{A_2 \setminus A_1}, A_2 ) = \B_W(\mubf, \tilde{J},A_1) +   \B_W(\mubf, J,A_2 \setminus A_1) < \B_W(\mubf,J,A_2),
\end{equation*}
which is a contradiction with $J$ tangent to $\mubf$ on $A_2$. This claim, together with the integral representation of $\B_W$, yields that $\TEE$ is subadditive, superadditive, increasing and inner regular, thus it is a measure~\cite[Theorem 14.23]{dalMaso2012introduction}. 
\end{proof}

\subsection{Counterexamples to the additivity of the Lagrangian lifting}

\paragraph{A heuristic argument}

Before diving into the counterexamples to the additivity of $\T_E$, it is instructive to understand why it fails by trying to prove it. To simplify the reasoning, we assume that $Q$ is concentrated on continuous functions. Following Remark~\ref{rmk:continuous_marginals}, we can therefore introduce the evaluation map $e_x : C(\Omega, \R^q) \to \R^q$ which maps $u$ onto $u(x)$. The condition $Q \in \Pi(\mubf)$ then reads $e_x \# Q = \mubf_x$ for $m$-a.e. $x \in X$, being $(\mubf_x)_{x \in \Omega}$ the disintegration of $\mubf$ with respect to the first variable. More generally, for any set $A \subseteq \Omega$, we can consider the restriction operator $e_A : C(\Omega, \R^q) \to C(A, \R^q)$, so that $e_{\{  x \}}$ coincides with $e_x$.   

Let us take $A_1, A_2$ two disjoint open sets, and for simplicity assume that their closure intersect on a common boundary $\Gamma$. By Proposition~\ref{prop:lifting_TE_properties},  
\begin{equation*}
\T_E(\mubf,A_1) + \T_E(\mubf, A_2) \leq \T_E(\mubf, A_1 \cup A_2)
\end{equation*}
and we would like to prove the other inequality. The natural strategy is to take $Q_1$ optimal for $\mubf$ on $A_1$ and $Q_2$ optimal for $\mubf$ on $A_2$, and try to glue them into a $Q$ which would be optimal for $\mubf$ over $A_1 \cup A_2$. That is, we would like to have $Q \in \Prob(C(\Omega, \R^q))$ such that $e_{A_1} \# Q = Q_1$ and $e_{A_2} = Q_2$. However, this would imply by continuity $e_\Gamma \# Q_1 = e_\Gamma \# Q_2$ as probability distributions over $C(\Gamma, \R^q)$. If $\Gamma = \{ x_0 \}$ is a singleton, then $e_{\{x_0\}} \# Q_1 = e_{\{x_0\}} \# Q_2$ as they both coincide with $\mubf_{x_0}$. However, as soon as $\Gamma$ contains more than two points, there is no guarantee that $e_\Gamma \# Q_1 = e_\Gamma \# Q_2$. 

When $\Omega$ is a segment of $\R$ and $A_1$ and $A_2$ are disjoint intervals then the above reasoning works as their closure intersect at at most one point. In this case $\T_E$ is additive, actually it coincides with $\TEE$  as stated in~\eqref{eq:intro_dynamic_curves}. But even if $\Omega$ is one dimensional, it is enough to take a periodic domain (e.g. the unit circle), so that there exist disjoint intervals $A_1$, $A_2$ whose closure intersect at two distinct points, and additivity may already not hold in this case. Our counterexample precisely tackles this case.

\begin{figure}
\begin{center}
\begin{tabular}{cc}
\begin{tikzpicture}

\draw[line width = 0.5] (0,0) -- (0,6) -- (6,6) -- (6,0) -- cycle ;

\draw (0,0) node[below left]{$0$} ;
\draw (6,0) node[below]{$2 \pi$} ;
\draw (0,3) node[left]{$\pi$} ;
\draw (0,6) node[left]{$2 \pi$} ;

\draw[line width = 1.5pt] (0,0) -- (6,3) ;
\draw[line width = 1.5pt] (0,3) -- (6,6) ;

\draw[dashed, line width = 1pt] (4,0) -- (4,6) ;
\draw (4,0) node[below]{$x$} ;

\filldraw[black] (4,2) circle (2.5pt) node[anchor=north west]{$\sqrt{x}$} ;
\filldraw[black] (4,5) circle (2.5pt) node[anchor=south east]{$-\sqrt{x}$} ;

\end{tikzpicture}

&

\includegraphics[width = 0.5 \textwidth]{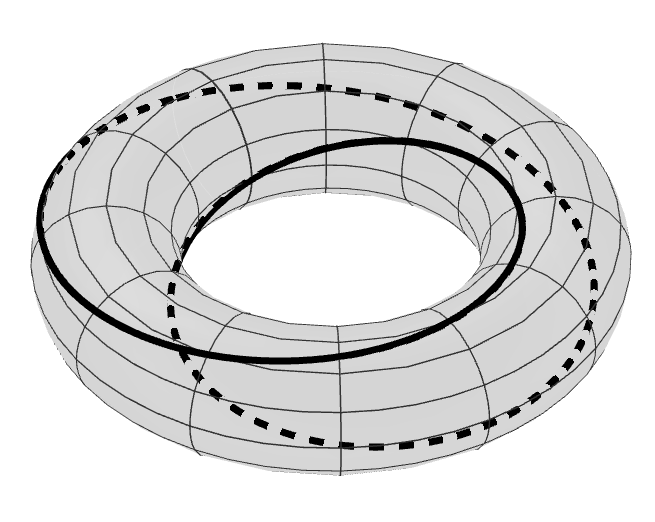}

\end{tabular}
\end{center}
\caption{Two representations of the measure-valued map defined in~\eqref{eq:map_counterexample_circle}, seen as a measure on the product space $\mathbb{S}^1 \times \mathbb{S}^1 \simeq (\R / 2 \pi) \times (\R / 2 \pi)$. Left: representation in the space $\R^2$, with the lateral and vertical sides of the square having to be identified. Right: representation in the space $\R^3$, where $\mathbb{S}^1 \times \mathbb{S}^1$ is embedded as a submanifold. For each interval strictly included in $\R / 2 \pi$, the measure-valued map can be seen as the superposition of two smooth classical maps. However it is not possible to represent it with maps which are continuous over the whole space $\R / 2 \pi$: one would need a $4 \pi$ periodic map instead of a $2 \pi$ periodic one. See Lemma~\ref{lemma:counterexample_circle} for the formalization of this idea.}
\label{fig:counterexample_circle}
\end{figure}

\paragraph{Maps defined on the unit circle}

Let $\mathbb{S}^1$ be the unit circle of $\R^2$, that we see as a subset of $\mathbb{C}$. We take $X = Y = \mathbb{S}^1$, and we endow $\mathbb{S}^1$ with the Lebesgue measure $m$ (a.k.a. Haar measure in this case). The functional $E$ we consider is the classical action of a curve, that is,
\begin{equation*}
E(u,A) = \begin{cases}
\dst{\frac{1}{2} \int_A |\dot{u}_t|^2 \, \ddr t}  & \text{if } u \in W^{1,2}(A, \mathbb{S}^1), \\
+ \infty & \text{otherwise}.
\end{cases}
\end{equation*}
We consider $\T_E$ the Lagrangian lifting of the functional to $\Mm(\mathbb{S}^1 \times \mathbb{S}^1) \times \A(\mathbb{S}^1)$. As curve of measures, we take $\mubf : \mathbb{S}^1 \to \Prob(\mathbb{S}^1)$ given as
\begin{equation}
\label{eq:map_counterexample_circle}
\mubf : x \mapsto \mubf_x = \frac{1}{2} (\delta_{\sqrt{x}} + \delta_{- \sqrt{x}}),
\end{equation} 
where $\sqrt{x}$ stands for a complex square root of $x$, see Figure~\ref{fig:counterexample_circle}. Equivalently we can see it as a measure on the product space $\mathbb{S}^1 \times \mathbb{S}^1$ whose first marginal is $m$ and whose disintegration is $(\mubf_x)_{x \in \mathbb{S}^1}$.

\begin{lm}
\label{lemma:counterexample_circle}
If $A$ is an open set of $\mathbb{S}^1$ which is not dense in $\mathbb{S}^1$ then $\T_E(\mubf,A) \leq m(A)/8$, while on the other hand $\T_E(\mubf,\mathbb{S}^1) = + \infty$.
\end{lm}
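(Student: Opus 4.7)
}

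The plan is to treat the two assertions separately; they are of different flavours, the first being a concrete construction of an admissible coupling and the second a topological obstruction.

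\emph{Upper bound when $A$ is not dense.} Since $\overline{A} \neq \mathbb{S}^1$, I pick $x_0 \in \mathbb{S}^1 \setminus \overline{A}$ and $\alpha \in \R$ such that $x_0 = e^{i\alpha}$. The map $\theta \mapsto e^{i\theta}$ is a diffeomorphism from $(\alpha, \alpha + 2\pi)$ onto $\mathbb{S}^1 \setminus \{x_0\}$, which contains $A$, so a continuous branch of the square root is available on $A$ and I can define two smooth maps $u_+, u_- : A \to \mathbb{S}^1$ by $u_\pm(e^{i\theta}) = \pm e^{i\theta/2}$ for $\theta \in (\alpha, \alpha + 2\pi)$. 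A direct computation gives $u_\pm(x)^2 = x$ and $|\dot{u}_\pm| = 1/2$ on $A$, hence $E(u_\pm, A) = m(A)/8$. Setting $Q = \frac{1}{2}(\delta_{u_+} + \delta_{u_-}) \in \Prob(L^0(A, \mathbb{S}^1; m))$, the identity $\mubf_x = \frac{1}{2}(\delta_{u_+(x)} + \delta_{u_-(x)})$ yields at once $Q \in \Pi(\mubf|_A)$. The alternate representation in Lemma~\ref{lm:alternate_representation_lifting_TE} gives
\begin{equation*}
\T_E(\mubf, A) \leq \int_{L^0(A, \mathbb{S}^1; m)} E(u, A) \, Q(\ddr u) = \frac{m(A)}{8}.
\end{equation*}

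\emph{Lower bound when $A = \mathbb{S}^1$.} I argue by contradiction. Assume $\T_E(\mubf, \mathbb{S}^1) < +\infty$; then there exists $Q \in \Pi(\mubf)$ with $\int E(u) \, Q(\ddr u) < +\infty$, so $Q$-a.e. $u$ lies in $W^{1,2}(\mathbb{S}^1, \mathbb{S}^1)$ and therefore admits a continuous representative by the one-dimensional Sobolev embedding. To enforce that $u$ actually takes values in the support of $\mubf$, I test the marginal relation against the continuous non-negative function $f(x,y) = d_{\mathbb{S}^1}(y^2, x)^2$ on $\mathbb{S}^1 \times \mathbb{S}^1$. Since $\mubf$ is supported on $\{(x,y) : y^2 = x\}$, we have $\int f \, \ddr \mubf = 0$, and Definition~\ref{def:marginal_Q} gives
\begin{equation*}
0 = \int_{\mathbb{S}^1 \times \mathbb{S}^1} f(x,y) \, \mubf(\ddr x, \ddr y) = \int_{L^0(\mathbb{S}^1, \mathbb{S}^1; m)} \int_{\mathbb{S}^1} f(x, u(x)) \, m(\ddr x) \, Q(\ddr u).
\end{equation*}
Hence $u(x)^2 = x$ for $Q$-a.e. $u$ and $m$-a.e. $x \in \mathbb{S}^1$, and by continuity this identity extends to every $x \in \mathbb{S}^1$. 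Such a $u$ would be a continuous section of the squaring double cover $\mathbb{S}^1 \to \mathbb{S}^1$, which is impossible by a standard monodromy argument: writing $u(e^{i\theta}) = \varepsilon(\theta) e^{i\theta/2}$ with $\varepsilon(\theta) \in \{\pm 1\}$, continuity forces $\varepsilon$ to be constant on $[0, 2\pi]$, whence $u(e^{i \cdot 0}) = \varepsilon$ while $u(e^{i \cdot 2\pi}) = -\varepsilon$, contradicting $e^{i \cdot 0} = e^{i \cdot 2\pi}$.

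\emph{Main obstacle.} Neither half is technically difficult once the strategy is fixed; the only delicate point is extracting the pointwise constraint $u(x)^2 = x$ from the marginal condition $Q \in \Pi(\mubf)$, which I handle through a single non-negative continuous test function vanishing exactly on $\supp \mubf$, thereby bypassing any need to disintegrate $Q$ or to evaluate measure-valued maps pointwise.
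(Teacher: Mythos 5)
Your proposal is correct and follows essentially the same route as the paper: the same two explicit square-root branches $u_\pm$ with $|\dot u_\pm|=1/2$ averaged into $Q=\tfrac12(\delta_{u_+}+\delta_{u_-})$ via Lemma~\ref{lm:alternate_representation_lifting_TE} for the upper bound, and the same obstruction (a finite-energy $Q$ would be concentrated on continuous $W^{1,2}$ selections of the complex square root, which cannot exist) for $\T_E(\mubf,\mathbb{S}^1)=+\infty$. The only difference is that you spell out two steps the paper leaves implicit, namely extracting $u(x)^2=x$ from $Q\in\Pi(\mubf)$ via the test function $d_{\mathbb{S}^1}(y^2,x)^2$ and the monodromy contradiction, both of which are fine.
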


\noindent Clearly this lemma shows that $\T_E$ is not subadditive as we can easily find $A_1, A_2$ both not dense in $\mathbb{S}^1$ such that $\mathbb{S}^1 = A_1 \cup A_2$.

\begin{proof}
Take $A$ an open set not dense in $\mathbb{S}^1$, say which does not contain a neighborhood of $\exp(i t_0)$. For $t \in [t_0, t_0 + 2 \pi)$ such that $\exp(it) \in A$, we define $u_1(\exp(it)) = \exp(it/2)$ and $u_2(\exp(it)) = -\exp(it/2)$. This defines $u_1, u_2$ two smooth functions over $A$. We set $\tilde{Q} = (\delta_{u_1} + \delta_{u_2}) /2$ as a probability measure on $L^0(A,\R^q;m)$ and we use it in~\eqref{eq:alternate_representation_lifting_TE} to estimate $\T_E(u,A)$. We have $|\dot{u}_i(t)| = 1/2$ for $i=1,2$ on $A$, thus we obtain the estimate $\T_E(u,A) \leq m(A)/8$. 

On the other hand, if we look at the whole circle $\mathbb{S}^1$ and if $Q \in \Pi(\mubf)$ is such that $\int E(u, \mathbb{S}^1) \, Q(\ddr u) < + \infty$, then $Q$-a.e. $u$ is such that $u(x)^2 = x$ for a.e. $x \in \mathbb{S}^1$. That is, $Q$-a.e. $u$ is a $W^{1,2}(\mathbb{S}^1, \mathbb{S}^1)$, thus continuous, selection of the complex square root. This is not possible, which implies that there is no $Q \in \Pi(\mubf)$ with $\int E(u,\mathbb{S}^1) \, Q(\ddr u) < + \infty$, hence $\T_E(u,\mathbb{S}^1) = + \infty$.    
\end{proof}

\paragraph{Maps defined on the unit disk}

The example can be extended to maps defined on the unit disk. Still we identify $\R^2$ with the complex plane $\mathbb{C}$. Calling $\mathbb{B}^2$ the unit disk of $\R^2$ and $m$ the Lebesgue measure on it, we consider the Dirichlet energy $E$ defined on $L^0(\mathbb{B}^2,\mathbb{B}^2;m) \times \A(\Omega)$ by 
\begin{equation*}
E(u,A) = \begin{cases}
\dst{\frac{1}{2} \int_A |\nabla u|^2} & \text{if } u \in W^{1,2}(A,\mathbb{B}^2), \\
+ \infty & \text{otherwise}. 
\end{cases}
\end{equation*}
We also look at $\T_E$ the lift of the functional $E$ onto $\Mm(\mathbb{B}^2 \times \mathbb{B}^2) \times \A(\mathbb{B}^2)$. Given a point $x$ that we write in polar coordinates $x = r \exp(it)$, we look at the map of probability measure
\begin{equation*}
\mubf : r \exp(it) \mapsto \mubf_x = \frac{1}{2} (\delta_{r \exp(it/2)} + \delta_{-r \exp(it/2)}),
\end{equation*}
and we see it as a measure on $\mathbb{B}^2 \times \mathbb{B}^2$ whose first marginal is $m$ and whose disintegration is given by $(\mubf_x)_{x \in \mathbb{B}^2}$. Similarly to the previous case, we can prove the following. 

\begin{lm}
Let $A$ an open set of $\mathbb{B}^2$ which does not contain a neighborhood of the half line $\{ r \exp(it_0) \, : \,  r \in [0,1] \}$ for some $t_0$, then $\T_E(\mubf,A) \leq 5 m(A)/8$, while on the other hand if $A$ is any corona centered at $0$, $\T_E(\mubf,A) = + \infty$. 
\end{lm}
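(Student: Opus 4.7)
The strategy is to mimic the one-dimensional argument of Lemma~\ref{lemma:counterexample_circle}, handling each half separately.

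For the upper bound, suppose $A$ avoids a neighborhood of the half-line $\{r\exp(it_0) \, : \, r\in[0,1]\}$, so that on $A$ we may unambiguously write $x = r\exp(it)$ with $t\in(t_0, t_0 + 2\pi)$ and $r\in(0,1]$. Define on $A$
\begin{equation*}
u_1(r\exp(it)) = r\exp(it/2), \qquad u_2(r\exp(it)) = -r\exp(it/2).
\end{equation*}
These are smooth on $A$, take values in $\mathbb{B}^2$, and satisfy $\{u_1(x), u_2(x)\} = \{r\exp(it/2), -r\exp(it/2)\}$, whence $\tilde{Q} := (\delta_{u_1}+\delta_{u_2})/2$ is a probability measure on $L^0(A,\mathbb{B}^2;m)$ belonging to $\Pi(\mubf|_A)$. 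A direct computation in polar coordinates yields $|\partial_r u_i| = 1$ and $|\partial_t u_i| = r/2$, so that $|\nabla u_i|^2 = 1 + 1/4 = 5/4$ on $A$ for $i=1,2$. Therefore $E(u_i, A) = 5m(A)/8$, and plugging $\tilde Q$ into the representation~\eqref{eq:alternate_representation_lifting_TE} gives the claimed bound $\T_E(\mubf, A) \leq 5m(A)/8$.

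For the lower bound, assume $A = \{r_1 < |x| < r_2\}$ is a corona centered at the origin and $\T_E(\mubf,A) < +\infty$, so that there exists $Q \in \Pi(\mubf|_A)$ with $\int E(u,A)\, Q(\ddr u) < +\infty$. Then $Q$-a.e.\ $u$ belongs to $W^{1,2}(A,\mathbb{B}^2)$ and, using the disintegration of $\mubf$ together with the fact that $\mubf_x$ is concentrated on $\{w\in\mathbb{C} \, : \, w^2 = x\}$, we deduce that $Q$-a.e.\ $u$ satisfies $u(x)^2 = x$ for $m$-a.e.\ $x\in A$.

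The final, and main, step is to derive a contradiction from the existence of such a Sobolev selection of the complex square root on the annulus $A$. The idea is to reduce to the circle case already handled in Lemma~\ref{lemma:counterexample_circle}: by the ACL characterization of $W^{1,2}$ together with Fubini in polar coordinates, for $Q$-a.e.\ $u$ there exists a set of radii $R_u \subset (r_1,r_2)$ of positive Lebesgue measure such that, for each $r\in R_u$, the restriction of (the precise representative of) $u$ to the circle $\mathbb{S}^1_r := \{|x|=r\}$ belongs to $W^{1,2}(\mathbb{S}^1_r,\mathbb{C})$, hence is continuous, and still satisfies $u(x)^2 = x$ for a.e.\ $x\in\mathbb{S}^1_r$. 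Such a continuous section of the square root on $\mathbb{S}^1_r$ cannot exist, as a continuity argument (or equivalently the argument showing that $\T_E(\mubf,\mathbb{S}^1) = +\infty$ in Lemma~\ref{lemma:counterexample_circle}) leads to a contradiction on any full loop around the origin. This forces $\T_E(\mubf,A) = +\infty$.

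The delicate point of the argument is the measurability issue hidden in the reduction to circles: one needs the map $u \mapsto (\text{trace on } \mathbb{S}^1_r)$ to be sufficiently well-behaved for the Fubini/ACL step to be carried out $Q$-almost surely in a uniform way. This is a standard consequence of the ACL characterization of $W^{1,2}$, but it is the only non-routine step of the proof.
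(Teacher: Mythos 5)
Your proof is correct, and for the second claim it takes a genuinely different route from the paper. For the upper bound you do exactly what the paper does (the competitor $\tilde Q=(\delta_{u_1}+\delta_{u_2})/2$ and the computation $|\nabla u_i|^2=5/4$). For the corona, the paper restricts attention to a single circle $C\subset A$ and uses the \emph{trace} of $u\in W^{1,2}(A,\mathbb{B}^2)$ on $C$, which is only $W^{1/2,2}(C)$ and hence not continuous; it then invokes the non-trivial result \cite[Lemma 5.4]{Lavenant2019} that even $W^{1/2,2}$ selections of the complex square root on a loop cannot exist. You instead exploit that the corona has positive thickness: by Fubini/ACL in polar coordinates, for each fixed finite-energy $u$ there is a set of radii of positive measure on which the restriction of $u$ is $W^{1,2}$ on the circle, hence continuous, and the classical monodromy obstruction to a continuous square-root selection gives the contradiction. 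This is more elementary and self-contained (no fractional Sobolev input), at the price of using the two-dimensional thickness of $A$, which the trace argument does not need. Two small remarks: first, with $\mubf_x=\frac12(\delta_{r\exp(it/2)}+\delta_{-r\exp(it/2)})$ at $x=r\exp(it)$, the a.e.\ relation is $u(x)^2=|x|\,x$, not $u(x)^2=x$ (the paper states it correctly); this is cosmetic, since on each circle $\mathbb{S}^1_r$ it is still, up to the rescaling factor $r$, a selection of the square root of the angular variable and the winding argument is unaffected. Second, the measurability/uniformity worry you flag at the end is unfounded: the contradiction is derived separately for each single $u$ with $E(u,A)<+\infty$ satisfying the constraint, showing that no such $u$ exists at all; hence $E(u,A)=+\infty$ for $Q$-a.e.\ $u$ for any $Q\in\Pi(\mubf|_A)$, and no uniform-in-$u$ statement about traces is required.
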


\begin{proof}
For the first claim, for $t \in [t_0, t_0 + 2 \pi)$ and $r \geq 0$ such that $r \exp(it) \in A$ we can define $u_1$, $u_2$ by $u_1(r \exp(it)) = r \exp(it/2)$ and $u_2(r \exp(it)) = -r \exp(it/2)$. Similarly to the proof of Lemma~\ref{lemma:counterexample_circle}, we take $\tilde{Q} = (\delta_{u_1} + \delta_{u_2}) /2$ as a competitor in expression~\eqref{eq:alternate_representation_lifting_TE}. As $|\nabla u_i|^2 \leq 5/4$ on $A$ for $i=1,2$, we get our estimate $\T_E(\mubf,A) \leq 5 m(A)/8$. 

On the other hand,  take a corona $A$, in particular it contains a circle $C = \{ x \ : \ |x| = r \}$ centered at the origin. If $Q \in \Pi(\mubf)$ with $\int E(u,A) \, Q(\ddr u) < + \infty$, then $Q$-a.e. $u$ is such that $u(x)^2 =   |x| x$ for a.e. $x \in A$. As $Q$-a.e. $u$ is a $W^{1,2}(A,\mathbb{B}^2)$ function, we can consider the trace of $u$ on $C$, and we obtain that $Q$-a.e. function $u$ has a trace on $C$ which is (up to rescaling) a $W^{1/2,2}(C,C)$ selection of the complex square root. Although $W^{1/2,2}$ functions are not continuous in one dimension, it is still impossible for them to be a selection of the complex square root as we already showed in our previous work~\cite[Lemma 5.4]{Lavenant2019}.  
\end{proof}

\paragraph{A smoothing argument}

In the counterexamples above the measures $\mubf$ are very singular, as they are valued in the set of finite combinations of Dirac masses. However, we can still prove that additivity is lost even when the measures are smoothed. Indeed, suppose that for some $\mubf \in \Mm(\Omega \times \R^q)$ and some open sets $A_1$, $A_2$ in $\A(\Omega)$ we have 
\begin{equation}
\label{eq:violation_additivity}
\T_E(\mubf, A_1) + \T_E(\mubf,A_2) < \T_E(\mubf, A_1 \cup A_2), 
\end{equation}
with the right hand side being potentially $+ \infty$.
To smooth out, we rely on the following lemma that we state in its un-localized version. 

\begin{lm}
\label{lemma:smoothing_TE}
Assume $\Omega$ is a subset of $\R^d$ endowed with its Lebesgue measure $m$, and consider a functional $E : L^0(\Omega, \R^q;m) \to [0, + \infty]$ which is translation invariant, in the sense $E(u + z) = E(u)$ for any $u \in L^0(\Omega,\R^q;m)$ and any fixed $z \in \R^q$. Then, for any non-negative mollifier $\chi$ on $\R^q$, and any $\mubf \in \Mm(\Omega \times \R^q)$,
\begin{equation*}
\T_E(\chi *_y \mubf) \leq \T_E(\mubf).
\end{equation*} 
Here $\chi *_y \mubf \in \Mm(\Omega \times \R^q)$ is defined via its disintegration $((\chi *_y \mubf)_{x})_{x \in X}$ with respect to the first coordinate: for $m$-a.e. $x \in \Omega$, the measure $(\chi *_y \mubf)_{x}$ is the convolution of $\chi$ with $\mubf_x$.
\end{lm}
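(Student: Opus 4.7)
The strategy is the natural one: start from any candidate $Q \in \Pi(\mubf)$, spread each trajectory by an independent random translation in $\R^q$ drawn according to $\chi$, and verify that the resulting measure on $L^0(\Omega,\R^q;m)$ has marginals $\chi *_y \mubf$ while having the same energy as $Q$.

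Concretely, I would assume without loss of generality that $\int_{\R^q} \chi = 1$ so that $\chi\,\ddr z$ is a probability measure on $\R^q$. Consider the translation map
\begin{equation*}
\Psi : L^0(\Omega,\R^q;m) \times \R^q \to L^0(\Omega,\R^q;m), \qquad \Psi(u,z) = u + z,
\end{equation*}
which is continuous hence Borel measurable. Given $Q \in \Pi(\mubf)$ with $\int E \, \ddr Q < +\infty$, define $\tilde Q := \Psi_\# \bigl( Q \otimes (\chi \, \ddr z) \bigr) \in \Prob(L^0(\Omega,\R^q;m))$. This fits into the framework of Example~\ref{ex:parametric_Q} with $Z = L^0(\Omega,\R^q;m) \times \R^q$, $\lambda = Q \otimes (\chi \, \ddr z)$, and $\Psi$ as above.

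The first step is to compute the energy of $\tilde Q$: by definition of the pushforward and translation invariance of $E$,
\begin{equation*}
\int_{L^0} E(v)\, \tilde Q(\ddr v) = \int_{\R^q} \int_{L^0} E(u+z)\, Q(\ddr u)\, \chi(z)\, \ddr z = \int_{L^0} E(u)\, Q(\ddr u),
\end{equation*}
using Fubini and $\int \chi = 1$. The second step is to verify that $\tilde Q \in \Pi(\chi *_y \mubf)$. Fix $\varphi \in C_b(\Omega \times \R^q)$ and set $\tilde \varphi(x,y) = \int_{\R^q} \varphi(x, y+z)\, \chi(z)\, \ddr z$, which is again continuous and bounded. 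Using the definition of $\tilde Q$ together with Fubini,
\begin{equation*}
\int_{L^0} \int_\Omega \varphi(x, v(x))\, m(\ddr x)\, \tilde Q(\ddr v) = \int_{L^0} \int_\Omega \tilde\varphi(x, u(x))\, m(\ddr x)\, Q(\ddr u).
\end{equation*}
Since $Q \in \Pi(\mubf)$, the right-hand side equals $\int_{\Omega \times \R^q} \tilde \varphi \, \ddr \mubf$, which by Fubini and the definition of convolution of a function with a measure rewrites as $\int_{\Omega \times \R^q} \varphi \, \ddr(\chi *_y \mubf)$. Thus $\tilde Q$ has marginals $\chi *_y \mubf$ in the sense of Definition~\ref{def:marginal_Q}.

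Combining these two steps yields $\T_E(\chi *_y \mubf) \leq \int E \, \ddr \tilde Q = \int E \, \ddr Q$, and taking the infimum over $Q \in \Pi(\mubf)$ gives the claim. The only genuinely delicate point is the marginal identity, but the reduction via $\tilde\varphi$ turns it into a standard Fubini manipulation; the rest is bookkeeping.
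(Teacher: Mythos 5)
Your proposal is correct and follows essentially the same route as the paper: the paper also pushes $Q$ forward by random translations distributed according to $\chi$ (it writes $\tilde Q = \int_{\R^q} Q_z\,\chi(z)\,\ddr z$ with $Q_z$ the pushforward of $Q$ by $u \mapsto u-z$), checks the marginal condition via the test-function/Fubini computation with $\varphi *_y \chi$, and uses translation invariance for the energy. The only cosmetic difference is your sign convention $u+z$ versus the paper's $u-z$, which is immaterial (and matches the standard convolution convention directly).
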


\begin{proof}
If $Q \in \Pi(\mubf)$, and if $z \in \R^q$, we define $Q_z$ as the push forward of $Q$ by the map $u \mapsto (u-z)$, and we set
\begin{equation*}
\tilde{Q} = \int_{\R^q} Q_z \, \chi(z) \, \ddr z.
\end{equation*} 
We can check that $\tilde{Q} \in \Pi(\chi *_y \mubf)$: for any test function $\varphi : \Omega \times \R^q \to \R$, using Fubini's theorem,
\begin{align*}
\int_{L^0(\Omega, \R^q;m)}\left( \int_\Omega \varphi(x,u(x)) \, \ddr x \right) \, \tilde{Q}(\ddr u)  
& = \int_{L^0(\Omega, \R^q;m)} \left( \int_{\R^q} \left( \int_\Omega \varphi(x,u(x)-z) \, \ddr x \right) \, \chi(z) \, \ddr z \right) \, Q(\ddr u) \\
& = \int_{L^0(\Omega, \R^q;m)}  \left( \int_\Omega (\varphi *_y \chi)(x,u(x)) \, \ddr x  \right) \, Q(\ddr u) \\
& = \int_{\Omega \times \R^q} (\varphi *_y \chi)(x,y) \, \mubf(\ddr x, \ddr y).
\end{align*}
Moreover $\int E \, \ddr \tilde{Q} = \int E \, \ddr Q$ as $E$ is invariant by translation. The conclusion follows. 
\end{proof}

If we take $(\chi_n)_{n \geq 1}$ a sequence of mollifiers which is an approximation of unity, it is clear that $\chi_n *_y \mubf$ converges narrowly to $\mubf$ as $n \to + \infty$. By lower semi-continuity of $\T_E$ and Lemma~\ref{lemma:smoothing_TE},
\begin{equation*}
\lim_{n \to + \infty} \T_E(\chi_n *_y \mubf) = \T_E (\mubf). 
\end{equation*}

Going back to the localized version and to the notations of~\eqref{eq:violation_additivity}, applying this reasoning, we deduce that for $n$ large enough additivity still fails:
\begin{equation*}
\T_E(\chi_n *_y \mubf, A_1) + \T_E(\chi_n *_y \mubf,A_2) < \T_E(\chi_n *_y \mubf, A_1 \cup A_2).
\end{equation*}
The difference with~\eqref{eq:violation_additivity} is that $\chi_n *_y \mubf$ is arguably ``smoother'' than $\mubf$: it always has a density with respect to the Lebesgue measure on $\Omega \times \R^q$, and this density can be made smooth if $(\chi_n)_{n \geq 1}$ is smooth.

\subsection{Characterization of the Eulerian lifting}

We arrive to the main result of this section: we characterize $\TEE$ as the convex, l.s.c. and \emph{subadditive} envelope of $\widetilde{\T}_E$.  As a preliminary result we first prove in the general case the existence of the convex, l.s.c. and \emph{subadditive} envelope of a localized functional $E$. Similarly to~\eqref{eq:defi_tilde_T} we define 
\begin{equation*}
\widetilde{\T}_E(\mubf,A) = \begin{cases}
E(u,A) & \text{if } \mubf = \mubf_u \text{ for some } u \in L^0(X,Y;m), \\
+ \infty & \text{otherwise}.
\end{cases}
\end{equation*}

\begin{prop}
For any $E : L^0(\Omega,\R^q;m) \times \A(\Omega) \to [0,+\infty]$, there exists a unique subadditive, increasing, inner regular, convex and l.s.c. functional which is the largest among all subadditive, increasing, inner regular, convex l.s.c. functionals $\mathcal{T} : \Mm(\Omega \times \R^q) \times \A(\Omega) \to [0, + \infty]$ such that $\mathcal{T} \leq \widetilde{\T}_E$. We denote this functional by $\overline{\mathcal{T}}_E$.
\end{prop}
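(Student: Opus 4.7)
The plan is to define $\overline{\T}_E$ directly as the pointwise supremum of the (non-empty) class $\mathcal{S}$ of functionals $\T : \Mm(\Omega \times \R^q) \times \A(\Omega) \to [0,+\infty]$ that are convex, l.s.c., increasing, subadditive, inner regular, and satisfy $\T \leq \widetilde{\T}_E$. The class $\mathcal{S}$ is non-empty because the identically zero functional belongs to it. The task then reduces to checking that each of these structural properties, as well as the upper bound by $\widetilde{\T}_E$, is preserved under pointwise supremum.

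Convexity and lower semi-continuity in the first argument follow from the classical fact that an arbitrary supremum of convex (respectively l.s.c.) functions is convex (respectively l.s.c.). The monotonicity and subadditivity of $\overline{\T}_E$ each follow from a one-line calculation: for $A_1 \subseteq A_2$ and any $\T \in \mathcal{S}$, one has $\T(\mubf, A_1) \leq \T(\mubf,A_2) \leq \overline{\T}_E(\mubf,A_2)$, and taking the supremum in $\T$ yields monotonicity of $\overline{\T}_E$; similarly, for open sets $A_1,A_2$,
\[
\T(\mubf,A_1 \cup A_2) \leq \T(\mubf,A_1) + \T(\mubf,A_2) \leq \overline{\T}_E(\mubf,A_1) + \overline{\T}_E(\mubf,A_2),
\]
and taking the supremum in $\T$ on the left-hand side yields subadditivity of $\overline{\T}_E$. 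The inequality $\overline{\T}_E \leq \widetilde{\T}_E$ holds trivially because every member of $\mathcal{S}$ does.

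For inner regularity, the inequality $\overline{\T}_E(\mubf,A) \geq \sup \{ \overline{\T}_E(\mubf,B) : B \in \A(\Omega),\; B \text{ compactly included in } A \}$ is already a consequence of the monotonicity just established. The converse direction uses the inner regularity of each $\T \in \mathcal{S}$:
\[
\T(\mubf,A) = \sup_{B} \bigl\{ \T(\mubf,B) : B \in \A(\Omega),\; B \text{ compactly included in } A \bigr\} \leq \sup_{B} \bigl\{ \overline{\T}_E(\mubf,B) : B \text{ compactly included in } A \bigr\},
\]
after which one takes the supremum over $\T \in \mathcal{S}$ on the left-hand side. Uniqueness of $\overline{\T}_E$ is immediate from its definition as the largest element of the family. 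There is no real obstacle in the argument: each of the five structural properties commutes with arbitrary pointwise suprema, and the only point that would require more care, inner regularity, reduces to interchanging two suprema and is trivialized by the monotonicity step.
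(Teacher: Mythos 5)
Your proof is correct, and it takes a genuinely more direct route than the paper. The paper proceeds in two steps: it first forms $\widehat{\T}_E$, the supremum over the class of convex, l.s.c., increasing, subadditive functionals below $\widetilde{\T}_E$ \emph{without} imposing inner regularity, and then passes to the inner regular envelope $\overline{\T}_E(\mubf,A) = \sup_B \widehat{\T}_E(\mubf,B)$, invoking \cite[Definition 14.4, Proposition 14.19]{dalMaso2012introduction} to guarantee that inner regularization preserves subadditivity; maximality is then obtained by combining $\T \leq \widehat{\T}_E$ with the inner regularity of the competitor $\T$. You instead take the supremum directly over the full class (including inner regularity) and observe that every defining property passes to an arbitrary pointwise supremum: convexity and lower semi-continuity classically, monotonicity and subadditivity by the one-line estimates you give (note it is superadditivity, not subadditivity, that would fail here), and inner regularity by the interchange of the two suprema, $\sup_\T \sup_B \T(\mubf,B) = \sup_B \sup_\T \T(\mubf,B)$, with the easy inequality supplied by monotonicity. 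Since both constructions produce the maximum of the same class, they define the same functional; your argument is self-contained and avoids the citation to the $\Gamma$-convergence literature, while the paper's detour through $\widehat{\T}_E$ buys nothing that is used later (the intermediate object does not reappear), so your shortcut is a legitimate simplification.
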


\noindent Note that by definition we always have $\overline{\mathcal{T}}_E \leq \mathcal{T}_E \leq \widetilde{\T}_E$.

\begin{proof}
We first define 
\begin{equation*}
\widehat{\T}_E = \sup_{\T} \, \T, 
\end{equation*}
where the supremum is taken over $\T$ increasing, subadditive, convex, l.s.c. (but not necessarily inner regular) and such that $\T \leq \widetilde{\T}_E$. Using that increasing, convex, l.s.c and subadditive functionals are stable by (uncountable) supremum, we see that $\widehat{\T}_E$ is increasing, convex, l.s.c and subadditive (but a priori not inner regular) and $\widehat{\T} \leq \widetilde{\T}_E$. 
Then we define $\overline{\T}_E \leq \widehat{\T}_E \leq \widetilde{\T}_E$ as the inner regular envelope of $\widehat{\T}_E$ following~\cite[Definition 14.4]{dalMaso2012introduction}, that is,      
\begin{equation*}
\overline{\T}_E(\mubf,A) = \sup_{B} \, \widehat{\T}_E(\mubf,B), 
\end{equation*}
where the supremum is taken over $B \in \A(\Omega)$ compactly included in $A$. The functional $\overline{\T}_E$ is still convex, l.s.c. (these properties are preserved by supremum), increasing, and moreover it is subadditive~\cite[Proposition 14.19]{dalMaso2012introduction}.

Eventually, let $\mathcal{T} : \Mm(\Omega \times \R^q) \times \A(\Omega) \to [0, + \infty]$ such that $\mathcal{T} \leq \widetilde{\T}_E$ and which is subadditive, increasing, inner regular, convex and l.s.c. Given how $\widehat{\T}_E$ is constructed, $\mathcal{T} \leq \widehat{\T}_E$. Thus by inner regularity of $\T$ and given how $\overline{\T}_E(\mubf,A)$ is defined, we have $\T(\mubf,A) \leq \overline{\T}_E(\mubf,A)$ for any $\mubf$ and any $A$. That proves the maximality of $\overline{\T}_E$.
\end{proof}

Our main result will hold for functionals $E$ of the form $u \mapsto \int W(\nabla u)$, but we will need a slightly stronger assumption than Assumption~\ref{asmp:W}. This additional technical condition will be used only in Lemma~\ref{lm:regularization_measure_valued}.

\begin{asmp}
\label{asmp:W_stronger}
The function $W : \R^{qd} \to [0, + \infty)$ is convex. Moreover it is approximately radial and growing at least like $|v|^p$ for $p \in [1, + \infty)$ in the sense
\begin{equation*}
C_1 f(|v|) - C_2 \leq W(v) \leq C_3 f(|v|) + C_4,
\end{equation*}
for a non-decreasing function $f : [0, + \infty) \to [0,+\infty)$ which satisfies $f(r) \geq r^p$, some constants $C_3 \geq C_1 > 0$ and $C_2, C_4 \geq 0$. Here, $| \cdot |$ denotes any norm on the space of matrices. 
\end{asmp}

\begin{theo}[characterization of $\TEE$]
\label{theo:main}
For $\Omega$ a bounded open set of $\R^d$ and $W$ which satisfies Assumption~\ref{asmp:W_stronger}, we define the localized functional $E : L^0(\Omega, \R^q;m) \times \A(\Omega) \to [0, + \infty]$ as the localized version of Definition~\ref{defi:E_from_W}, see Example~\ref{ex:localization_E}. Then
\begin{equation*}
\overline{\mathcal{T}}_E = \TEE,
\end{equation*}
that is, the localized version of the Eulerian lifting (see~\eqref{eq:TEE_localized}) is the subadditive, increasing, inner regular, convex and l.s.c. envelope of $E$ on measure-valued maps.
\end{theo}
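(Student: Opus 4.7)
The equality $\overline{\T}_E = \TEE$ splits into two inequalities. The easy direction $\TEE \leq \overline{\T}_E$ follows from Proposition~\ref{prop:TEE_localized_lsc_convex_local}: $\TEE$ is convex, l.s.c., local and a measure on $\A(\Omega)$ (hence increasing, inner regular and subadditive), and the lifting identity $\TEE(\mubf_u, A) = E(u,A)$ gives $\TEE \leq \widetilde{\T}_E$, so $\TEE$ belongs to the class over which $\overline{\T}_E$ is the supremum. For the converse $\overline{\T}_E \leq \TEE$, the guiding idea is to approximate $\mubf$ \emph{locally} on small cubes by convex combinations of the special measures $\mubf_u$ --- something impossible globally by the counterexamples of Section~\ref{sec:difference_liftings} but achievable on small sets via a first-order expansion of the Eulerian flow --- and then recombine using subadditivity, the feature which distinguishes $\overline{\T}_E$ from $\T_E$.

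Fix $\mubf$ and open $A$ with $\TEE(\mubf, A) < + \infty$. By inner regularity of both functionals we reduce to $A$ compactly included in $\Omega$. Apply Lemma~\ref{lm:regularization_measure_valued} (whose hypotheses rest on Assumption~\ref{asmp:W_stronger}) to produce mollifications $\mubf^\varepsilon$ of $\mubf$ admitting a smooth tangent velocity $v^\varepsilon$, with $\mubf^\varepsilon \to \mubf$ narrowly and $\TEE(\mubf^\varepsilon, A) \to \TEE(\mubf, A)$; by l.s.c. of $\overline{\T}_E$ it then suffices to prove $\overline{\T}_E(\mubf^\varepsilon, A) \leq \TEE(\mubf^\varepsilon, A)$ for each fixed $\varepsilon$. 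Cover $A$ by a grid of open cubes $(A_i^r)_i$ of side $r$ with centers $x_i^r$, and on each $A_i^r$ introduce the affine maps $u_z^{i,r} : x \mapsto z + v^\varepsilon(x_i^r, z)(x - x_i^r)$ parametrized by $z \in \R^q$, setting
\[
\bar{\mubf}^{(i,r)} := \int_{\R^q} \mubf_{u_z^{i,r}} \, \mubf^\varepsilon_{x_i^r}(\ddr z),
\]
a convex combination of the measures $\mubf_u$. A Taylor expansion of $\mubf^\varepsilon$ at $x_i^r$, using the continuity equation $\nabla_x \mubf^\varepsilon + \nabla_y \cdot (v^\varepsilon \mubf^\varepsilon) = 0$, shows that the disintegration of $\bar{\mubf}^{(i,r)}$ at $x \in A_i^r$ agrees with $\mubf^\varepsilon_x$ up to $O(|x - x_i^r|^2)$; consequently the glued measure $\bar{\mubf}^r := \sum_i \bar{\mubf}^{(i,r)} \1_{A_i^r \times \R^q}$ converges narrowly to $\mubf^\varepsilon$ on $A \times \R^q$ as $r \to 0$. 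Since $\nabla u_z^{i,r} \equiv v^\varepsilon(x_i^r, z)$ on $A_i^r$, one has $E(u_z^{i,r}, A_i^r) = r^d W(v^\varepsilon(x_i^r, z))$, so that the sum of local energies is precisely a Riemann sum for $\int W(v^\varepsilon) \, \ddr \mubf^\varepsilon = \TEE(\mubf^\varepsilon, A)$.

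Chaining l.s.c. of $\overline{\T}_E$, subadditivity over the finite partition, locality (so $\overline{\T}_E(\bar{\mubf}^r, A_i^r) = \overline{\T}_E(\bar{\mubf}^{(i,r)}, A_i^r)$), and Jensen's inequality (valid as in Proposition~\ref{prop:TE_upper_bound} since its proof uses only convexity and l.s.c.) together with the trivial bound $\overline{\T}_E(\mubf_u, A_i^r) \leq E(u, A_i^r)$, one obtains
\[
\overline{\T}_E(\mubf^\varepsilon, A) \leq \liminf_{r \to 0} \sum_i \overline{\T}_E(\bar{\mubf}^{(i,r)}, A_i^r) \leq \liminf_{r \to 0} \sum_i r^d \int W(v^\varepsilon(x_i^r, y)) \, \mubf^\varepsilon_{x_i^r}(\ddr y) = \TEE(\mubf^\varepsilon, A),
\]
and letting $\varepsilon \to 0$ closes the argument. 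The main technical obstacle is the quantitative form of the first-order approximation $\bar{\mubf}^{(i,r)} \approx \mubf^\varepsilon$ on each cube, sharp enough to yield both the narrow convergence $\bar{\mubf}^r \to \mubf^\varepsilon$ and the Riemann-sum convergence of energies, plus careful handling of cube boundaries; both rest essentially on the smoothness of $(\mubf^\varepsilon, v^\varepsilon)$ delivered by the regularization lemma, which is precisely why Assumption~\ref{asmp:W_stronger} must strengthen Assumption~\ref{asmp:W}.
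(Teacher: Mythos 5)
Your overall architecture is the same as the paper's (easy direction via Proposition~\ref{prop:TEE_localized_lsc_convex_local}; hard direction via Lemma~\ref{lm:regularization_measure_valued}, an approximation on small sets, and subadditivity to patch), but your local construction is genuinely different. The paper works on starshaped pieces and builds, by flowing $\tilde{\mubf}_{x_0}$ along the ODE $\dot y_t=\tilde v((1-t)x_0+tx,y_t)(x-x_0)$, a coupling $Q$ whose marginals are \emph{exactly} $\tilde{\mubf}$ on the piece, so the only error is in the energy ($|\nabla_x\Psi-\tilde v(\cdot,\Psi)|\lesssim\mathrm{diam}$), and no extra limit in the measure variable is needed; you instead freeze coefficients and use affine maps, so your mixtures $\bar{\mubf}^{(i,r)}$ only approximate $\mubf^\varepsilon$ to order $r^2$, and you compensate with an additional limit $r\to0$ through lower semi-continuity of the envelope, the energies appearing as a Riemann sum. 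Both routes put the real work in the same place (the smoothness and no-flux boundary condition supplied by the regularization lemma, which is where Assumption~\ref{asmp:W_stronger} enters): your Taylor cancellation at first order is exactly the continuity equation contracted with $x-x_i^r$ together with $\tilde v\cdot\mathbf n_B=0$ on $\partial B$, and is a legitimate substitute for the paper's flow-differentiation estimate. Your version avoids the ODE/superposition-principle machinery at the price of a diagonal limit in $(r,\varepsilon)$ and of having to prove narrow convergence of the glued measures.

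There is, however, one step that is circular as written: you use \emph{locality of $\overline{\T}_E$} to replace $\overline{\T}_E(\bar{\mubf}^r,A_i^r)$ by $\overline{\T}_E(\bar{\mubf}^{(i,r)},A_i^r)$. Locality is not among the properties defining the envelope $\overline{\T}_E$, and the paper explicitly obtains it only as a \emph{consequence} of the theorem (``this envelope is actually local and a measure'' follows from $\overline{\T}_E=\TEE$), so you may not assume it. The gap is repairable: either bound $\overline{\T}_E(\bar{\mubf}^r,A_i^r)\le\T_E(\bar{\mubf}^r,A_i^r)$ and use locality of $\T_E$ (Proposition~\ref{prop:lifting_TE_properties}(ii), via Lemma~\ref{lm:alternate_representation_lifting_TE}) together with the fact that the parametric coupling built from $z\mapsto u^{i,r}_z$ with law $\mubf^\varepsilon_{x_i^r}$ lies in $\Pi(\bar{\mubf}^{(i,r)})$; or write $\bar{\mubf}^r$ globally as a mixture under the product coupling $Q^r=\bigotimes_i(\text{law of }u^{i,r}_{z_i})$ and apply Jensen to $\overline{\T}_E(\cdot,A_i^r)$ at $\bar{\mubf}^r$ directly, using locality of $E$ (not of $\overline{\T}_E$) to compute $\int E(u,A_i^r)\,Q^r(\ddr u)$. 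Two smaller bookkeeping points: Lemma~\ref{lm:regularization_measure_valued} does not give $\TEE(\mubf^\varepsilon,A)\to\TEE(\mubf,A)$ nor optimality of $v^\varepsilon$ for $\mubf^\varepsilon$; it gives the one-sided bound $\B_W(\mubf^\varepsilon,v^\varepsilon\mubf^\varepsilon,\tilde A)\le\TEE(\mubf,A)+\varepsilon$, so your Riemann sums converge to $\B_W(\mubf^\varepsilon,v^\varepsilon\mubf^\varepsilon,\cdot)$ (not to $\TEE(\mubf^\varepsilon,\cdot)$), which is all you need; and the regularized measure lives on $\tilde A\times B$ with $\tilde A$ compactly included in $A$, so the reduction should be via inner regularity of $\overline{\T}_E$ in the form $\overline{\T}_E(\mubf,A)=\sup_{\tilde A\Subset A}\overline{\T}_E(\mubf,\tilde A)$, as in the paper, rather than shrinking $A$ inside $\Omega$. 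With these repairs your argument goes through.
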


\noindent In particular this envelope $\overline{\mathcal{T}}_E$ is actually local and a measure by Proposition~\ref{prop:TEE_localized_lsc_convex_local}. Note that even though the theorem is stated here for a functional depending only on $\nabla u$, it is easy to add \emph{any} continuous zero order term as they are linear, so that we deal with functionals like~\eqref{eq:intro_energy_generic} as presented in the introduction.

\begin{crl}
Let $\Omega$, $W$ and $E$ as in Theorem~\ref{theo:main}. For a continuous and bounded function $f : \Omega \times \R^q \to \R$ we define $E_L : (u,A) \mapsto \int_A f(x,u(x)) \, \ddr x$, and following Proposition~\ref{prop:envelope_linear} we define
\begin{equation*}
\T_{E_L}(\mubf,A) = \int_{A \times \R^q} f(x,y) \, \mubf(\ddr x, \ddr y).
\end{equation*}

Then $\overline{\mathcal{T}}_{E+E_L}$, the subadditive, increasing, inner regular, convex and l.s.c. envelope of $E + E_L$ on measure-valued maps, is equal to $\TEE + \T_{E_L}$.
\end{crl}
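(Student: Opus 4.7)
My plan is to prove the equality $\overline{\T}_{E+E_L} = \TEE + \T_{E_L}$ by checking two inequalities, using Theorem~\ref{theo:main} as a black box so that the hard work of constructing a recovery sequence for $\TEE$ is already done. A useful preliminary observation is that the two ``raw'' functionals $\widetilde{\T}_{E+E_L}$ and $\widetilde{\T}_E + \T_{E_L}$ coincide on $\Mm(\Omega \times \R^q) \times \A(\Omega)$: on Diracs $\mubf_u$, the lifting identities of Corollary~\ref{crl:lifting_T_dyn} and Proposition~\ref{prop:envelope_linear} give $(E+E_L)(u,A)$ for both, and elsewhere both equal $+\infty$ (using that $\T_{E_L}$ is finite-valued because $f$ is bounded).

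For the easy direction $\TEE + \T_{E_L} \leq \overline{\T}_{E+E_L}$, I would simply check that $\TEE + \T_{E_L}$ is a valid competitor for the envelope. Convexity and lower semi-continuity come from the fact that $\T_{E_L}$ is linear and narrow-continuous (since $f \in C_b(\Omega \times \R^q)$), while $\TEE$ is convex and l.s.c.\ by Proposition~\ref{prop:TEE_localized_lsc_convex_local}. For subadditivity, monotonicity, and inner regularity, I would note that both $\TEE$ and $\T_{E_L}$ are Borel measures in the set argument (the former by Proposition~\ref{prop:TEE_localized_lsc_convex_local}, the latter trivially since $f$ is bounded), so their sum is again a Borel measure and hence enjoys all three properties. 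Combined with the bound on Diracs from the preliminary observation, maximality of the envelope yields the inequality.

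For the reverse inequality $\overline{\T}_{E+E_L} \leq \TEE + \T_{E_L}$, I would rely on a recovery-sequence argument extracted from the proof of Theorem~\ref{theo:main}. That proof constructs, for each $\mubf$ with $\TEE(\mubf, A) < +\infty$ and each open $B \subset\subset A$, a sequence of classical maps $u_n$ such that $\mubf_{u_n} \to \mubf$ narrowly on $B \times \R^q$ and $\limsup_n E(u_n, B) \leq \TEE(\mubf, B)$. Because $f$ is continuous and bounded, narrow convergence gives $E_L(u_n, B) = \int_{B \times \R^q} f \, \ddr \mubf_{u_n} \to \T_{E_L}(\mubf, B)$. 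Combining with the envelope bound $\overline{\T}_{E+E_L}(\mubf_{u_n}, B) \leq \widetilde{\T}_{E+E_L}(\mubf_{u_n}, B) = (E+E_L)(u_n, B)$ and the l.s.c.\ of $\overline{\T}_{E+E_L}$ gives $\overline{\T}_{E+E_L}(\mubf, B) \leq \TEE(\mubf, B) + \T_{E_L}(\mubf, B)$. A final passage to the supremum over $B \subset\subset A$, combined with inner regularity of both sides (for the right-hand side, because it is a sum of two Borel measures), concludes.

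The main potential obstacle would be to redo the construction of the recovery sequence, but this is entirely folded into the proof of Theorem~\ref{theo:main} and is simply re-used. The only genuinely new point is that the zero-order term $E_L$ passes to the limit, which is a direct consequence of $f$ being continuous and bounded together with narrow convergence on $B \times \R^q$; a minor technical subtlety is that one may need to choose $B$ so that $\mubf(\partial B \times \R^q) = 0$ (which is always possible via a slicing argument) so that $f \mathbf{1}_{B \times \R^q}$ is effectively a bounded continuous integrand against $\mubf$. Otherwise, the corollary follows mechanically from Theorem~\ref{theo:main} combined with the linearity and narrow continuity of $\T_{E_L}$.
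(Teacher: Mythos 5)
Your first inequality ($\TEE + \T_{E_L} \leq \overline{\T}_{E+E_L}$) is essentially correct and matches what the paper implicitly uses: $\T_{E_L}$ is linear, narrowly continuous, local and a measure in the set variable, so $\TEE + \T_{E_L}$ is an admissible competitor and maximality of the envelope applies. The gap is in your reverse inequality. You claim that the proof of Theorem~\ref{theo:main} provides, for $\mubf$ with $\TEE(\mubf,A)<+\infty$ and $B \subset\subset A$, a recovery sequence of \emph{classical maps} $u_n$ with $\mubf_{u_n} \to \mubf$ narrowly and $\limsup_n E(u_n,B) \leq \TEE(\mubf,B)$. No such sequence exists in general, and the proof of Theorem~\ref{theo:main} does not construct one. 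If it did, then by lower semi-continuity of the Lagrangian lifting $\T_E$ and the identity $\T_E(\mubf_{u_n},B)=E(u_n,B)$ you would get $\T_E(\mubf,B) \leq \TEE(\mubf,B)$, i.e.\ the convex l.s.c.\ envelope would already coincide with $\TEE$ --- contradicting the counterexamples of Section~4.2 (e.g.\ the corona on the disk, where $\T_E(\mubf,A)=+\infty$ while $\TEE(\mubf,A)$ is finite). The whole point of Theorem~\ref{theo:main} is that the Eulerian lifting is \emph{not} reachable by approximating $\mubf$ with Dirac-type maps $\mubf_{u_n}$: its proof only builds plans $Q\in\Pi(\tilde\mubf)$ on small star-shaped pieces $\hat A$, with an error of order $\mathrm{diameter}(\hat A)$, and then patches the pieces using the \emph{subadditivity} of the abstract envelope, which is exactly the step a recovery-sequence argument cannot reproduce. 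So your second half, as written, fails.

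Two ways to close the gap. The paper's route is purely formal: since $\T_{E_L}$ is continuous, linear, local and a measure, the functional $\overline{\T}_{E+E_L} - \T_{E_L}$ is itself convex, l.s.c., subadditive, increasing, inner regular and bounded above by $\widetilde{\T}_E$ on Dirac-type maps, hence it is $\leq \overline{\T}_E = \TEE$ by Theorem~\ref{theo:main}; combined with your first inequality this gives the corollary with no new construction. Alternatively, you could rerun the proof of Theorem~\ref{theo:main} for $E+E_L$: on each small piece $\hat A$ the zero-order term causes no difficulty, not because of a limit of $\mubf_{u_n}$, but because any $Q \in \Pi(\tilde\mubf)$ satisfies $\int E_L(u,\hat A)\,Q(\ddr u) = \int_{\hat A \times \R^q} f \, \ddr \tilde\mubf$ exactly by the marginal condition, and the continuity of $f$ is only needed to pass from the regularized $\tilde\mubf$ back to $\mubf$. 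Either repair works; the recovery-sequence shortcut does not.
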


\begin{proof}
This is straightforward once we notice that, $\T_{E_L}$ being continuous, local and a measure, the functional $\overline{\mathcal{T}}_{E+E_L} - \T_{E_L}$ should be the subadditive, increasing, inner regular, convex and l.s.c. envelope of $E$, and thus we can apply Theorem~\ref{theo:main}.
\end{proof}

Before moving on to the proof of Theorem~\ref{theo:main}, let us give a heuristic argument to explain why it is plausible. Let us fix $\mubf \in \Mm(\Omega \times \R^q)$, and assume for simplicity that $W$ is strictly convex and that the (unique) tangent $J$ has density $v : \Omega \times \R^q \to \R^{qd}$ with respect to $\mubf$. We also take $Q \in \Prob(L^0(\Omega, \R^q;m))$ optimal for $\T_E$. We claim that, if we had $\T_E(\mubf, \Omega)= \TEE(\mubf, \Omega)$, then, for $Q$-a.e. $u$, there holds 
\begin{equation}
\label{eq:nablau_v}
\text{for } m \text{-a.e. } x \in \Omega, \quad  \nabla u(x) = v(x, u(x)).
\end{equation}
Indeed, consider $\tilde{J} = \int J_u \, Q(\ddr u) \in \M(\Omega \times \R^q)^{qd}$ where $J_u$, tangent to $\mubf_u$, is defined in the proof of Corollary~\ref{crl:lifting_T_dyn}. As $\nabla_x \mubf_u + \nabla_y \cdot J_u = 0$, averaging w.r.t. $u$ we find that $\nabla_x \mubf + \nabla_y \cdot \tilde{J} = 0$. Using Equation~\eqref{eq:dual_BW} to represent $\B_W$,
\begin{align*}
\TEE(\mubf, \Omega) \leq \B_W(  \mubf,  \tilde{J}, \Omega) & = \sup_{b} \left\{ \int_{\Omega \times \R^q} b \, \ddr \tilde{J} - \int_{\Omega \times \R^q} W^*(b) \, \ddr \mubf \right\} \\
& = \sup_{b} \left\{ \int_{L^0(\Omega, \R^q;m)} \left( \int_{\Omega} [ b(x,u(x)) \cdot \nabla u(x)  -W^*(b(x,u(x))) ] \, \ddr x \right) \, Q(\ddr u) \right\} \\
& \leq  \int_{L^0(\Omega, \R^q;m)} \left( \int_{\Omega} W(\nabla u(x)) \, \ddr x \right) \, Q(\ddr u) \\
& = \int_{L^0(\Omega, \R^q;m)} E(u,\Omega) \, Q(\ddr u) = \T_E(u, \Omega) 
\end{align*}
where the supremum is taken over all functions $b : \Omega \times \R^q \to \R^{qd}$, and the second inequality comes from the definition of $W^*$. Thus we see that $\TEE(\mubf) \leq \T_E(u)$ (that we already knew), and if there is equality then the inequalities are equalities, so that $\tilde{J} = J$ is tangent and that $Q$-a.e. $u$ satisfies $\nabla u(x) = (\nabla W^*)(b(x,u(x)))$ for the optimal $b$. This implies that the density of $\tilde{J}$ with respect to $\mubf$ is $(\nabla W^*)(b)$, which coincides with $v$ as $\tilde{J} = J$. Thus we get that $Q$-a.e. $u$ satisfies $\nabla u(x) = (\nabla W^*)(b(x,u(x))) = v(x,u(x))$ for $m$-a.e. $x$, that is, \eqref{eq:nablau_v}.

If $\Omega$ is a segment of $\R$ then~\eqref{eq:nablau_v} is an ODE, it can be solved (regularity issues aside): this is precisely how the identity~\eqref{eq:intro_dynamic_curves} is proved, see~\cite[Theorem 8.2.1]{AGS}. When $\Omega$ is of dimension more than $2$ then~\eqref{eq:nablau_v} is a much stronger constraint and it implies conditions on $v$ which have no reason to be satisfied as we already explained in our previous work~\cite[Section 5.3]{Lavenant2019}.

On the other hand, if $\Omega$ is small enough, then there is a hope that we can solve~\eqref{eq:nablau_v} approximately: this is what is done in the proof below by projecting~\eqref{eq:nablau_v} on rays emanating from a point in $\Omega$, see the estimate~\eqref{eq:localization}. If $\Omega$ is not small, we cut it in small pieces, use the approximate version~\eqref{eq:localization} on each piece, and then patch the results. We can patch the results precisely because $\overline{\T}_E$ is sudadditive, it could not be possible with $\T_E$.

\begin{proof}[\textbf{Proof of Theorem~\ref{theo:main}}]
As $\TEE$ is convex, l.s.c, a measure, and satisfies $\TEE \leq \widetilde{\T}_E$ (see Proposition~\ref{prop:TEE_localized_lsc_convex_local}), we only need to show that
\begin{equation*}
\overline{\mathcal{T}}_E(\mubf,A) \leq \TEE(\mubf,A).
\end{equation*}
for any $\mubf \in \Mm(\Omega \times \R^q)$ and any open set $A \in \A(\Omega)$. We fix such $\mubf$, $A$ until the end of the proof and without loss of generality we can assume $\TEE(\mubf,A) < + \infty$.

\medskip

\textit{First step. Regularization}. This step is rather classical and we will refer Lemma~\ref{lm:regularization_measure_valued} that we put in the appendix for to the precise statement and the proof. With the help of this lemma, given $\varepsilon > 0$, $\tilde{A}$ an open set compactly embedded in $A$ and $\mathcal{V}$ a neighborhood of the restriction of $\mubf$ to $\tilde{A} \times \R^q$ for the topology of narrow convergence, we find $\tilde{\mubf} \in \mathcal{V}$ which has a density $\tilde{\rho}$ with respect to the Lebesgue measure. This density $\tilde{\rho}$ is supported on $\tilde{A} \times B$ and is smooth and bounded from below by a strictly positive constant on $\tilde{A} \times B$, being $B$ a ball in $\R^q$. Moreover, there is also $\tilde{v} \in C^\infty(\tilde{A} \times B, \R^{qd})$ such that $(\tilde{\mubf}, \tilde{v} \tilde{\mubf})$ satisfy~\eqref{eq:continuity_smooth} the generalized continuity equation as well as the estimate~\eqref{eq:lm_reg_estimate_energy} which reads $\B_W(\tilde{\mubf}, \tilde{v} \tilde{\mubf},\tilde{A}) \leq \TEE(\mubf,A) + \varepsilon$. In the next two steps we will prove that 
\begin{equation}
\label{eq:to_prove_after_reg}
\overline{\mathcal{T}}_E(\tilde{\mubf},\tilde{A}) \leq \B_W(\tilde{\mubf}, \tilde{v} \tilde{\mubf}, \tilde{A}).
\end{equation}
It will be enough to prove the theorem. Indeed, from~\eqref{eq:to_prove_after_reg},we obtain $\overline{\mathcal{T}}_E(\tilde{\mubf},\tilde{A}) \leq \TEE(\mubf,A) + \varepsilon$ from~\eqref{eq:lm_reg_estimate_energy}, thus we can take $\varepsilon > 0$ going to $0$ and the neighborhood $\mathcal{V}$ shrinking to $\{ \mubf \}$ to obtain $\overline{\mathcal{T}}_E(\mubf,\tilde{A}) \leq \TEE(\mubf,A)$ thanks to the lower semi-continuity of $\overline{\T}_E(\cdot, \tilde{A})$. Then we take the supremum over $\tilde{A} \subset A$ with compact inclusion in $A$ and get $\overline{\mathcal{T}}_E(\mubf,A) \leq \TEE(\mubf,A)$ thanks to the inner regularity of the envelope $\overline{\T}_E$, which is our conclusion. Thus, we are now left with the proof of~\eqref{eq:to_prove_after_reg}.

\medskip

\textit{Second step. Approximation on small sets}. Now we fix $\tilde{\mubf}$, $\tilde{\rho}, \tilde{v}$ and $\tilde{A}$ as given in the first step, and we write $\tilde{A} \times B$ for the support of $\tilde{\rho}$. We denote by $\lesssim$ an equality which holds up to a multiplicative constant which depends only on $\tilde{v}$ as well as its derivatives. We consider $\tilde{\mubf}_x = \tilde{\rho}(x,y) \ddr y$ for any $x \in \tilde{A}$: the family $(\tilde{\mubf}_x)_{x \in \tilde{A}}$ is the disintegration of $\tilde{\mubf}$ with respect to its variable.

We will prove that if $\hat{A}$ is a starshaped domain included in $\tilde{A}$, then we can find $Q \in \Pi(\tilde{\mubf})$ such that
\begin{equation}
\label{eq:localization}
\frac{1}{m(\hat{A})} \left| \int_{L^0(\hat{A},\R^q;m)} E(u,\hat{A}) \, Q(\ddr u)  -  \B_W(\tilde{\mubf}, \tilde{v} \tilde{\mubf}, \hat{A}) \right| \lesssim \mathrm{diameter}(\hat{A}).
\end{equation}
To that end, fix $x_0 \in \hat{A}$ such that the segment connecting $x_0$ to any $x \in \hat{A}$ stays in $\hat{A}$. For a fixed $x \in \hat{A}$, we call $\tilde{\rho}^x_t(y) = \tilde{\rho}((1-t) x_0 + t x, y)$ for $t \in [0,1]$: this is the density on the segment joining $x_0$ to $x$. Projecting the (smooth) continuity equation~\eqref{eq:continuity_smooth} on the line $(x- x_0)$, we see that this curve of measures solve the classical continuity equation
\begin{equation}
\label{eq:continuity_equation_classical}
\begin{cases}
\dr_t \tilde{\rho}^x_t + \nabla_y \cdot( w^x \tilde{\rho}^x_t) = 0 & \text{in } [0,1] \times B, \\
w^x \cdot \mathbf{n}_B = 0 & \text{on } [0,1] \times \dr B
\end{cases}
\qquad \text{with} \qquad w^x(t,y) = \tilde{v}((1-t)x_0+t x,y) (x - x_0),
\end{equation} 
where $\tilde{v}((1-t)x_0+t x,y) (x - x_0)$ is to be read as a matrix vector product and $\mathbf{n}_B$ is the outward normal to $B$.
For this equation we can apply the classical superposition principle, see~\cite[Proposition 8.1.8]{AGS} or~\cite[Theorem 4.4]{SantambrogioOTAM}. Specifically for any $x \in \hat{A}$, we call $\Psi(x,y)$ the solution at time $t=1$ of the ODE starting at $x_0$ at time $0$ which reads 
\begin{equation*}
\dot{y}_t = w^x(t,y_t) = \tilde{v}((1-t)x_0+tx, y_t ) (x - x_0)
\end{equation*}
It defines a function $\Psi : \hat{A} \times B \to B$ which is smooth (of class $C^\infty$) by classical regularity for flows of ODEs. From the classical superposition principle~\cite[Proposition 8.1.8]{AGS} applied to~\eqref{eq:continuity_equation_classical}, there holds $\Psi(x, \cdot) \# \tilde{\mubf}_{x_0} = \tilde{\mubf}_x$ for any $x \in \hat{A}$. Moreover, as $w^x(t,y) = v(x_0,y) (x-x_0) + o(x-x_0)$ uniformly in $t \in [0,1]$, that is, $w^{x_0}(t,y) = 0$ and $\nabla_x w^x(t,y)|_{x=x_0} = v(x_0,y)$, standard theory of ODEs yields that we can differentiate the flow and obtain: 
\begin{equation*}
\Psi(x_0,\cdot) = \mathrm{Id}, \qquad \nabla_x \Psi(x_0, \cdot)  = \tilde{v}(x_0, \cdot ) .
\end{equation*}

To produce a competitor in~\eqref{eq:to_prove_after_reg} we follow the idea of a ``parametric'' $Q$ developed in Example~\ref{ex:parametric_Q}, where now $y \in B$ is the ``parameter''. Indeed for every $y$, we can look at $u_y : x \mapsto \Psi(x,y)$, and we simply set $Q$ as the pushforward of $\tilde{\mubf}_{x_0} = \tilde{\rho}(x_0,y) \ddr y$ by the map $y \mapsto u_y$. In particular $Q$ is concentrated on $C^\infty(\hat{A},B)$ and as $\Psi(x,\cdot) \# \tilde{\mubf}_{x_0} = \tilde{\mubf}_x$, the computations of Example~\ref{ex:parametric_Q} guarantee $Q \in \Pi(\tilde{\mubf})$. On the other hand for the value of the functional following Example~\ref{ex:parametric_Q_E} we find 
\begin{equation*}
\int_{L^0(\hat{A},\R^q;m)} E(u,\hat{A}) \, Q(\ddr u) = \int_{B} \left( \int_{\hat{A}} W(\nabla_x \Psi(x,y)) \, \ddr x \right) \, \tilde{\rho}(x_0, y) \ddr y.
\end{equation*}
To write the other term in the left hand side of~\eqref{eq:localization}, we use $\Psi(x, \cdot) \# \tilde{\mubf}_{x_0} = \tilde{\mubf}_x = \tilde{\rho}(x,y) \ddr y$ and obtain  
\begin{align*}
\B_W(\tilde{\mubf}, \tilde{v} \tilde{\mubf}, \hat{A}) & = \int_{\hat{A} \times B} W(\tilde{v}(x,y))  \tilde{\rho}(x, y) \, \ddr x \ddr y \\
& = \int_{\hat{A}} \left( \int_B W(\tilde{v}(x,y)) \, \tilde{\rho}(x, y) \ddr y \right) \ddr x \\
& = \int_{\hat{A}} \left( \int_{B} W(\tilde{v}(x, \Psi(x,y))) \,  \tilde{\rho}(x_0, y) \ddr y  \right) \, \ddr x \\
& = \int_{B} \left( \int_{\hat{A}} W(\tilde{v}(x, \Psi(x,y))) \, \ddr x   \right) \,  \tilde{\rho}(x_0, y) \ddr y
\end{align*}
where the last equality is Fubini's theorem. The two terms in the left hand side of~\eqref{eq:localization} now look alike:
\begin{multline*}
\left| \int_{L^0(\hat{A},\R^q;m)} E(u, \hat{A}) \, Q(\ddr u)  -  \B_W(\tilde{\mubf}, \tilde{v} \tilde{\mubf}, \hat{A})   \right| \\
 = \left| \int_{B} \left( \int_{\hat{A}} W(\nabla_x \Psi(x,y)) - W(\tilde{v}(x, \Psi(x,y))) \, \ddr x \right) \, \tilde{\rho}(x_0, y) \, \ddr y \right|.
\end{multline*}
We use smoothness of $\Psi$ and $\tilde{v}$ to conclude: as $\Psi$ is a $C^\infty$ function on $\hat{A} \times B$, with derivatives controlled only by the ones of $\tilde{v}$, and as $\nabla_x \Psi(x_0, y) = \tilde{v}(x_0,y) = \tilde{v}(x_0, \Psi(x_0,y) )$ for any $y$, we obtain uniformly in $y \in B$
\begin{equation*}
| \nabla_x \Psi(x, y)  - \tilde{v}(x, \Psi(x,y) ) | \lesssim |x - x_0| \leq \mathrm{diameter}(\hat{A}).
\end{equation*}
Thanks to $W$ convex and finite everywhere thus Lipschitz on bounded sets, we conclude to 
\begin{align*}
& \left| \int_{B} \left( \int_{\hat{A}} W(\nabla_x \Psi(x,y)) - W(\tilde{v}(x, \Psi(x,y))) \, \ddr x \right) \, \tilde{\rho}(x_0, y) \, \ddr y \right| \\
& \qquad \qquad \qquad \leq \mathrm{Lip}(W) \int_{B} \left( \int_{\hat{A}} | \nabla_x \Psi(x, y)  - \tilde{v}(x, \Psi(x,y) ) | \, \ddr x   \right) \, \tilde{\rho}(x_0, y) \, \ddr y \\
& \qquad \qquad \qquad  \lesssim \mathrm{Lip}(W) m(\hat{A}) \, \mathrm{diameter}(\hat{A}),
\end{align*}
being $\mathrm{Lip}(W)$ a Lipschitz constant of $W$ on the image of $\tilde{v}$ and $\nabla_x \Psi$. This implies our estimate~\eqref{eq:localization} which we rewrite
\begin{equation*}
\overline{\mathcal{T}}_E(\mubf,\hat{A}) \leq \T_E(\mubf,\hat{A})  \leq \B_W(\tilde{\mubf}, \tilde{v} \tilde{\mubf}, \hat{A})  + C m(\hat{A}) \mathrm{diam}(\hat{A}),
\end{equation*}
where $C$ depends only on $W$ and $\tilde{v}$ as well as its derivatives.

\medskip

\textit{Third step. Conclusion.} Fix $\delta > 0$. We take a partition (up to a negligible set) of $\tilde{A}$ into $n$ pieces $\hat{A}_1, \ldots, \hat{A}_n$, each of them starshaped and of diameter at most $\delta$. As $\overline{\mathcal{T}}_E$ is subadditive but $\B_W$ is additive, we obtain
\begin{equation*}
\overline{\mathcal{T}}_E(\mubf,\tilde{A}) \leq \sum_{i=1}^n \overline{\mathcal{T}}_E(\mubf,\hat{A}_i) \leq \sum_{i=1}^n \left( \B_W(\tilde{\mubf}, \tilde{v} \tilde{\mubf}, \hat{A}_i) + C \delta m(\hat{A}_i) \right) 
 = \B_W(\tilde{\mubf}, \tilde{v} \tilde{\mubf}, \tilde{A}) + C \delta m(\tilde{A}).
\end{equation*} 
As we take more and more pieces and send $\delta \to 0$, we get exactly $\overline{\mathcal{T}}_E(\mubf,\tilde{A}) \leq \B_W(\tilde{\mubf}, \tilde{v} \tilde{\mubf}, \tilde{A})$, that is,~\eqref{eq:to_prove_after_reg}.
\end{proof}

\appendix

\section{Some technical results}

In this appendix we collect some results which are quite classical and/or use techniques which are not central in the present work. 

\subsection{Checking the assumption of coercivity of the functional \texorpdfstring{$E$}{E}}
\label{sec:appendix_assmp_A}

We present a setting tailored for the applications in the rest of this article in which Assumption~\ref{asmp:reg_XYm_E} is satisfied. It illustrates that checking Assumption~\ref{asmp:reg_XYm_E} is more or less equivalent to checking existence of a minimizer for a problem of calculus of variations for classical maps, thus techniques are by now well-established. 

\begin{prop}
\label{prop:checking_assumption_A}
We fix $X = \Omega$ a bounded open set of $\R^d$ that we endow with $m$ the Lebesgue measure. On the other hand the space $Y$ is taken to be $Y = \R^q$.  For an integer $k \geq 1$ and $p \in (1, + \infty)$, let $L = L(x,u,v_1, v_2, \ldots, v_k) \in (0, + \infty)$ a Lagrangian which is Carathéodory, continuous in its last $k+1$ variables for fixed $x \in \Omega$ and  and convex in its last variable when the other ones are fixed. We also assume that it grows at least like $|v_k|^p$ in its last variable, that is
\begin{equation*}
L(x,u,v_1, v_2, \ldots, v_k) \geq C_1 |v_k|^p - C_2
\end{equation*}
for some constants $C_1> 0$, $C_2 \geq 0$, uniformly in the variables $x,u,v_1, \ldots, v_{k-1}$. We consider the functional $E$ defined over $L^0(\Omega, \R^q;m)$ by 
\begin{equation*}
E(u) = \begin{cases}
\dst{\int_\Omega L(x, u(x), \nabla u (x), \nabla^2 u(x), \ldots, \nabla^k u(x)) \, \ddr x} & \text{if } u \in W^{k,p}(\Omega, \R^q),  \\
 + \infty & \text{otherwise},
\end{cases}
\end{equation*}
being $W^{k,p}(\Omega,\R^q)$ the standard Sobolev space of functions having $k$ derivatives in $L^p$. 
Then the functional $E$ satisfies Assumption~\ref{asmp:reg_XYm_E}. 
\end{prop}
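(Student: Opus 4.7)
The plan is to verify Assumption~\ref{asmp:reg_XYm_E} in two steps: first the lower semi-continuity of $E$ on $L^0(\Omega,\R^q;m)$, and second the relative compactness in $L^0$ of the sublevel sets of $u\mapsto E(u)+\int_\Omega\psi(u)\,\ddr x$. Both steps rest on the same pair of classical ingredients: the $p$-growth bound $L\ge C_1|v_k|^p-C_2$, which controls $\nabla^k u$ in $L^p(\Omega)$ as soon as $E(u)$ is finite, and a Poincaré--Wirtinger inequality modulo polynomials of degree $<k$ on each ball $B\subset\Omega$, which provides $P_B(u)$ in the finite-dimensional space $\mathcal{P}^{<k}(B,\R^q)$ of $\R^q$-valued polynomials of degree $<k$ with $\|u-P_B(u)\|_{W^{k,p}(B)}\le C_B\|\nabla^k u\|_{L^p(B)}$.

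For the lower semi-continuity, I would take $u_n\to u$ in $L^0$ with $\liminf_n E(u_n)=:C<+\infty$ and pass to a subsequence along which $u_n\to u$ a.e.\ and $E(u_n)\le C$. Fixing a ball $B\subset\Omega$ and writing $P_n^B:=P_B(u_n)$, the growth bound forces $u_n-P_n^B$ to be bounded in $W^{k,p}(B)$, so Rellich--Kondrachov yields a subsequence converging strongly in $L^p(B)$ to some $w_B$. Combined with the a.e.\ convergence of $u_n$, this forces $P_n^B\to u-w_B$ a.e.\ on $B$; since any nonzero $\R^q$-valued polynomial has zero set of measure zero, a.e.\ convergence inside the finite-dimensional space $\mathcal{P}^{<k}(B,\R^q)$ upgrades via Egorov to convergence in every norm, so $u=w_B+\lim_n P_n^B$ lies in $W^{k,p}(B)$ and $u_n\to u$ weakly in $W^{k,p}(B)$. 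A standard lower semi-continuity theorem for Carathéodory integrands convex in the top-order variable (e.g.\ Ioffe's theorem) on $B$ then gives $\int_B L(x,u,\ldots,\nabla^k u)\,\ddr x\le\liminf_n\int_B L(x,u_n,\ldots,\nabla^k u_n)\,\ddr x\le C$, and a monotone convergence argument along an exhaustion of $\Omega$ by balls concludes $E(u)\le C$.

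For the compactness step, I would start from $(u_n)$ with $E(u_n)+\int_\Omega\psi(u_n)\,\ddr x\le C$. The $E$-bound again gives $\|\nabla^k u_n\|_{L^p}$ bounded, while the $\psi$-bound together with Markov's inequality and the compactness of the sublevel sets of $\psi$ gives uniform tightness in measure of $(u_n)$: for every $\varepsilon>0$ there is $R>0$ with $m(\{|u_n|>R\})<\varepsilon$ uniformly in $n$. On each ball $B$, the difference $u_n-P_n^B$ is bounded in $W^{k,p}(B)$ and hence tight in measure, so $P_n^B$ is tight in measure as well. Inside the finite-dimensional space $\mathcal{P}^{<k}(B,\R^q)$ tightness in measure is equivalent to boundedness in every norm: otherwise, normalising by $\|P_n^B\|_{L^1(B)}\to+\infty$ and extracting a finite-dimensional limit $Q\ne 0$ would give $m(\{|P_n^B|>R\})\to m(\{Q\ne 0\})=m(B)$ for every fixed $R$, contradicting tightness. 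Hence $u_n$ itself is bounded in $W^{k,p}(B)$, precompact in $L^p(B)$ by Rellich--Kondrachov, and a diagonal extraction along a countable exhaustion of $\Omega$ by balls yields a subsequence converging in $L^0(\Omega)$.

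The main obstacle is that the $E$-bound only controls the top-order derivative $\nabla^k u_n$, so one cannot directly apply Rellich--Kondrachov to $u_n$ itself and must go through the Poincaré--Wirtinger decomposition modulo polynomials. The whole difficulty is then concentrated in controlling the finite-dimensional polynomial parts $P_n^B$, and the extra information available in each case --- a.e.\ convergence for the lower semi-continuity, uniform tightness for the compactness --- is precisely what is needed to pin them down via the finite-dimensionality of $\mathcal{P}^{<k}(B,\R^q)$.
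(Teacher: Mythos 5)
Your proof is correct and rests on the same two pillars as the paper's own argument: since the growth bound only controls $\nabla^k u_n$ in $L^p$, one decomposes modulo the finite-dimensional space of polynomials of degree $<k$, pins down the polynomial part using the extra information available (a.e.\ convergence for the lower semi-continuity, the coercive $\psi$ for the compactness), and then invokes a classical semicontinuity theorem for Carath\'eodory integrands convex in the top-order variable (Ioffe in your write-up, Giusti's Theorem 4.5 in the paper). The implementation differs: the paper first reduces ``without loss of generality'' to $\Omega$ bounded, connected with smooth boundary and uses a single global decomposition $u_n=\tilde u_n+P_n$ with $P_n$ matching the moments of $u_n$ up to order $k-1$, identifying the limit of $P_n$ through convergence of moments in the l.s.c.\ step and excluding $\|P_n\|\to\infty$ through the blow-up $\int_\Omega\psi(u_n)\,\ddr x\geq m(A)\,\psi(c\|P_n\|_{L^p})$ in the compactness step; you instead localize on balls via the Poincar\'e--Wirtinger (Bramble--Hilbert) inequality, control the per-ball polynomial by a.e.\ convergence together with the fact that a nonzero polynomial has a null zero set, respectively by tightness in measure, and then patch by exhaustion and a diagonal extraction. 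Your tightness-in-measure contradiction is the paper's $\psi$-blow-up argument in different clothes (it only uses that $\psi\to+\infty$ at infinity), and the ball-wise route sidesteps the paper's reduction to smooth subdomains, which is a small gain. Three minor remarks: the appeal to Egorov is not what is actually needed (what you use is precisely finite-dimensionality plus the null zero set of a nonzero polynomial, exactly as in your compactness step); the per-ball subsequence extractions in the l.s.c.\ step require the same diagonal argument over a countable cover that you invoke later, a routine fix; and, like the paper's own patching over smooth subdomains, your local argument delivers $u\in W^{k,p}(B)$ for every ball $B$ together with $\nabla^k u\in L^p(\Omega)$, and tacitly uses that this yields the global membership $u\in W^{k,p}(\Omega,\R^q)$ required in the definition of $E$ --- a point at the same level of detail as the paper's proof, not a gap specific to yours.
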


\begin{proof}
Using \cite[Theorem 4.5]{Giusti2003}, we know that $E$ is l.s.c. along sequences $u_n$ such that $\nabla^k u_n$ converges locally weakly in $L^p(\Omega)$ while $u$, $\nabla u, \ldots$, $\nabla^{k-1} u$ converge locally strongly in $L^p(\Omega)$. 

\medskip

\emph{First step: a preliminary observation}.
Let us start with a preliminary observation, by looking at what we can say about a sequence $(u_n)_{n \geq 1}$ such that 
\begin{equation*}
\sup_n \; E(u_n) < + \infty. 
\end{equation*}
Without loss of generality, in the sequel we assume that $\Omega$ is bounded, connected and has a smooth boundary: indeed, as \cite[Theorem 4.5]{Giusti2003} requires only \emph{local} convergence, we can always decompose $\Omega$ in a countable union of subdomain which are bounded, connected and with smooth boundaries, and patch pieces via a diagonal argument. 

By the growth assumption on $L$, we know that $\nabla^k u$ is bounded in $L^p(\Omega)$. We write $u_n = \tilde{u}_n + P_n$ where $(P_n)_{n \in \N}$ is a sequence of polynomials in $\Omega$ of degrees at most $k-1$ which match the moments up to order $k-1$ of $u_n$. By Sobolev embeddings we know that up to extraction the sequence $(\tilde{u}_n)_{n \geq 1}$ converges strongly in $W^{k-1,p}$ and weakly in $W^{k,p}$ to some $\tilde{u} \in W^{k,p}$.

\medskip

\emph{Second step: lower semi-continuity of $E$}. Armed with this preliminary observation, let us prove lower semi-continuity of $E$ for the convergence in $L^0(\Omega, \R^q;m)$. 
Indeed, following \cite[Theorem 4.5]{Giusti2003}, it is enough to prove that the polynomial $P_n$ converges to a limit. But $P_n$ matches the moment up to order $k-1$ of $u_n$, and the latter converges in $L^0(\Omega,\R^p;m)$ while $\Omega$ is bounded: it implies that the moments of $u_n$ converge to the ones of $u$, thus implying convergence of $(P_n)_{n \in \N}$ to a limit polynomial.

\medskip

\emph{Third step: compactness of the sublevel sets.} Then, let us take $\psi : \R^q \to [0, + \infty)$ with compact sublevel sets, and $(u_n)_{n \in \N}$ a sequence in $L^0(\Omega, \R^q;m)$ such that 
\begin{equation*}
\sup_n \left\{ E(u_n) + \int_\Omega \psi(u_n(x)) \, \ddr x \right\} < + \infty.
\end{equation*}
Without loss of generality we can assume that $\psi$ is radial and converges to $+ \infty$ at infinity. We want to prove that, up to extraction, $(u_n)_{n \in \N}$ converges to a limit in $L^0(\Omega,\R^q;m)$. By the preliminary observation we can write $u_n = \tilde{u}_n + P_n$, with $P_n$ polynomial of degree $k-1$ and $\tilde{u}_n$ a sequence which converges strongly in $W^{k-1,p}$. We only need to prove that, up to extraction, the sequence of polynomials $P_n$ converge, and for this we will use the function $\psi$. Assume by contradiction it is not the case. As $(P_n)_{n \geq 1}$ belongs to a space of finite dimension, it means that any norm of $P_n$ should diverge as $n \to + \infty$, for instance the $L^p(\Omega)$ norm. In particular, writing
\begin{equation*}
u_n = \tilde{u}_n + P_n = \| P_n \|_{L^p} \left( \frac{\tilde{u}_n}{\| P_n \|_{L^p}} + \frac{P_n}{\| P_n \|_{L^p}} \right),
\end{equation*} 
we know that $\tilde{u}_n / \| P_n \|_{L^p}$ converges to zero in $L^p(\Omega, \R^q;m)$, while the second term $\frac{P_n}{\| P_n \|_{L^p}}$ converges also in $L^p(\Omega, \R^q;m)$, up to extraction, to a polynomial $P$ of degree at most $k-1$ which is non zero as it belongs to the unit sphere of $L^p(\Omega, \R^q;m)$. In particular, with the help of Markov's inequality, again up to extraction of a subsequence, we can find a set $A$ of positive measure such that, for $n$ large enough and every $x \in A$,
\begin{equation*}
\left| \frac{\tilde{u}_n(x)}{\| P_n \|_{L^p}} + \frac{P_n(x)}{\| P_n \|_{L^p}} \right| \geq \frac{1}{2} |P(x)| \geq c
\end{equation*}
for some constant $c > 0$ independent on $n$.
We conclude by noticing that 
\begin{equation*}
\int_\Omega \psi(u_n(x)) \, \ddr x \geq \int_A \psi(u_n(x)) \, \ddr x = \int_A \psi\left( \| P_n \|_{L^p} \left( \frac{\tilde{u}_n}{\| P_n \|_{L^p}} + \frac{P_n}{\| P_n \|_{L^p}} \right) \right)   \geq m(A) \psi(c \| P_n \|_{L^p} ),
\end{equation*}
and that the right hand side goes to $+ \infty$ as $n \to + \infty$ which clearly contradict our assumption. 
\end{proof}

For the sake of completeness, and as we use it in the case of variational models with TV regularization, we also present the result in the case $p=1$. We only sketch the arguments of the proof as they are very similar to the one of Proposition~\ref{prop:checking_assumption_A}.

\begin{prop}
\label{prop:checking_assumption_A_p_=1}
We fix $X = \Omega$ a bounded open set of $\R^d$ that we endow with $m$ the Lebesgue measure. On the other hand the space $Y$ is taken to be $Y = \R^q$. We take $E : L^0(\Omega, \R^q;m) \to [0, + \infty]$ defined as in Definition~\ref{defi:E_from_W}, where $W : \R^{qd} \to [0, + \infty]$ satisfies Assumption~\ref{asmp:W} with $p=1$. 

Then the functional $E$ satisfies Assumption~\ref{asmp:reg_XYm_E}. 
\end{prop}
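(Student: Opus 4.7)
The plan is to mirror the proof of Proposition~\ref{prop:checking_assumption_A}, substituting the Sobolev space $W^{1,p}(\Omega,\R^q)$ by $\mathrm{BV}(\Omega,\R^q)$ and the polynomial correctors of degree $k-1$ by vector-valued constants, which form the kernel of the distributional gradient. The two main external ingredients will be the $\mathrm{BV}$ version of the Rellich--Kondrachov compactness theorem \cite[Theorem 3.23]{AFP} and the classical Reshetnyak-type lower semicontinuity of convex integrands on $\mathrm{BV}$ with recession function acting on the singular part \cite[Theorem 5.47]{AFP}. As in Proposition~\ref{prop:checking_assumption_A}, I would reduce to $\Omega$ connected and, if needed, recombine countably many connected subdomains via a diagonal argument.

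For the lower semicontinuity, I would start from $(u_n) \subset L^0(\Omega,\R^q;m)$ with $u_n \to u$ in $L^0$ and $\liminf_n E(u_n) = C < +\infty$, then extract a subsequence converging $m$-a.e. with $E(u_n) \leq C+1$. The lower bound $W(v) \geq C_1 |v| - C_2$ from Assumption~\ref{asmp:W} combined with $W'_\infty(v) \geq C_1 |v|$ from~\eqref{eq:lower_bound_recession} yield $C_1 |\nabla u_n|(\Omega) \leq E(u_n) + C_2 m(\Omega)$, so the total variations $|\nabla u_n|(\Omega)$ are uniformly bounded. Egorov's theorem then produces a measurable set $A \subseteq \Omega$ of positive measure on which $(u_n)$ is uniformly bounded; combining this with the $\mathrm{BV}$ Poincaré--Wirtinger inequality centered at the average of $u_n$ over $A$ provides an $L^1(\Omega,\R^q)$ bound on $(u_n)$. $\mathrm{BV}$ compactness then yields $L^1$-convergence along a further subsequence, the limit necessarily coinciding with $u$ by a.e. uniqueness, and \cite[Theorem 5.47]{AFP} concludes $E(u) \leq \liminf_n E(u_n)$.

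For the compactness of sublevel sets, I would take $\psi : \R^q \to [0,+\infty)$ with compact sublevel sets and $(u_n)$ such that $\sup_n \bigl[ E(u_n) + \int_\Omega \psi(u_n(x)) \, \ddr x \bigr] < +\infty$. The previous estimate again bounds $|\nabla u_n|(\Omega)$. Writing $u_n = \tilde{u}_n + c_n$ with $c_n$ the average of $u_n$ over $\Omega$, Poincaré--Wirtinger bounds $\tilde{u}_n$ in $L^1(\Omega,\R^q)$, and I would argue that $(c_n)$ itself is bounded. Indeed, if $|c_n| \to +\infty$ along a subsequence, Markov's inequality together with a diagonal extraction yields a set $A$ of positive measure on which $|\tilde{u}_n|$ is uniformly bounded, so $|u_n(x)| \to +\infty$ on $A$; the compact sublevel property of $\psi$ then forces $\psi(u_n(x)) \to +\infty$ on $A$, and Fatou's lemma contradicts the uniform bound on $\int_\Omega \psi(u_n) \, \ddr x$. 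Hence $(u_n)$ is bounded in $\mathrm{BV}(\Omega,\R^q)$, and the $\mathrm{BV}$ Rellich--Kondrachov theorem supplies the desired $L^1$- (and thus $L^0$-) relative compactness.

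The main delicate point is the handling of the singular part $\nabla^s u_n$ in the lower semicontinuity step, but this is essentially off-the-shelf once one invokes the Reshetnyak-type theorem, so no idea beyond those already used in Proposition~\ref{prop:checking_assumption_A} is required; the rest is bookkeeping adapted from the Sobolev setting to the $\mathrm{BV}$ setting.
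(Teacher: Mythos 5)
Your proof is correct and takes essentially the same route as the paper's: reduce to connected subdomains with nice boundary, extract a total-variation bound from the coercivity of $W$ and $W'_\infty$, split $u_n$ into a mean-zero part controlled in $\mathrm{BV}$ plus constants, rule out escape of the constants via the compact sublevel sets of $\psi$, and conclude by $\mathrm{BV}$ compactness together with the standard lower semicontinuity of convex functionals of measures (the paper invokes \cite[Theorem 2.34]{AFP} where you invoke the Reshetnyak-type statement on $\mathrm{BV}$, and it identifies the limit through convergence of the means where you use Egorov plus a Poincaré inequality relative to a set of positive measure --- cosmetic variants of the same argument). The only step to tighten is the ``Markov plus diagonal extraction'' claim of a single set on which all $|\tilde{u}_n|$ are bounded: either work with the per-$n$ sets $\{|\tilde{u}_n|\leq M\}$, which have measure at least $m(\Omega)/2$ and already give the contradiction with $\int_\Omega \psi(u_n)$, or obtain a fixed set via $\mathrm{BV}$ compactness, a.e.\ convergence and Egorov, exactly as in your semicontinuity step.
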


\begin{proof}
Using \cite[Theorem 2.34]{AFP} we know that $E$ is l.s.c. along sequences $u_n$ such that $\nabla u_n$ converges locally weakly as a measure.

Reasoning as in the proof of Proposition~\ref{prop:checking_assumption_A}, we can always restrict to the case where $\Omega$ is bounded, connected and has a smooth boundary. Similarly for any sequence $(u_n)_{n \geq 1}$  which satisfies $\sup_n E(u_n) < + \infty$ by embedding results \cite[Corollary 3.49]{AFP} we can write $u_n =  \tilde{u}_n + y_n$ where $(y_n)_{n \geq 1}$ is the sequence of the means of $\tilde{u_n}$ and $(\tilde{u}_n)_{n \geq 1}$ a sequence such that $(\tilde{u}_n)_{n \geq 1}$ converges strongly in $L^1$ while $(\nabla \tilde{u}_n)_{n \geq 1}$ converges weakly-$\star$ as a measure.

The rest of the proof follows \emph{mutatis mutandis} the second and third step of the proof of Proposition~\ref{prop:checking_assumption_A}, by replacing $P_n$ by $y_n$.
\end{proof}

\subsection{Functional of measures}
\label{sec:appendix_BW}

In this appendix, we state some standard properties of the functional $\B_W$ introduced in Definition~\ref{def:energyBW}.

First, note that with Assumption~\ref{asmp:W} it is easy to derive the coercivity estimate
\begin{equation}
\label{eq:coercivity_BW}
\B_W(\mubf,J) \geq C_1 |J|(\Omega \times \R^{q})^{qd} - C_2 |\mubf|(\Omega \times \R^q) = C_1 |J|(\Omega \times \R^{q})^{qd} - C_2 m(\Omega).
\end{equation}

The main result of this appendix is the ``dual'' representation of the functional.

\begin{prop}
\label{prop:dual_BW}
The function $(\mubf, J) \mapsto \B_W(\mubf,J)$ is jointly l.s.c. for weak-$\star$ convergence and convex. Moreover, being $W^*$ the Legendre transform of $W$ there holds
\begin{equation}
\label{eq:dual_BW}
\B_W(\mubf, J) = \sup_{a,b} \left\{ \int_{\Omega \times \R^q} a \, \ddr \mubf + \int_{\Omega \times \R^q} b \cdot \, \ddr J \ : \ a+W^*(b) \leq 0 \right\}
\end{equation}
where the supremum is taken over all pairs $(a,b)$ of continuous compactly supported functions from $\Omega \times \R^q$ to $\R$ and $\R^{qd}$ respectively.
\end{prop}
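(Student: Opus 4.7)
\textbf{Proof plan for Proposition~\ref{prop:dual_BW}.}
The strategy is to recognize $\B_W$ as an integral of a convex, positively $1$-homogeneous integrand against a vector measure, and then invoke the classical duality for such functionals. First I would introduce the perspective function $F : \R \times \R^{qd} \to [0, + \infty]$ defined by
\begin{equation*}
F(s,v) = \begin{cases}
s \, W(v/s) & \text{if } s > 0, \\
W'_\infty(v) & \text{if } s = 0, \\
+ \infty & \text{if } s < 0.
\end{cases}
\end{equation*}
A standard computation shows that $F$ is convex, l.s.c. and positively $1$-homogeneous on $\R \times \R^{qd}$. Picking for instance $\sigma = \mubf + |J^\perp|$, where $J = v\mubf + J^\perp$ is the Radon–Nikodym decomposition of $J$ with respect to $\mubf$, a direct inspection of the densities $\ddr \mubf/\ddr \sigma$ and $\ddr J/\ddr \sigma$ on the $\mubf$-part and on the $|J^\perp|$-part yields
\begin{equation*}
\B_W(\mubf, J) = \int_{\Omega \times \R^q} F\!\left( \frac{\ddr \mubf}{\ddr \sigma}, \frac{\ddr J}{\ddr \sigma} \right) \, \ddr \sigma.
\end{equation*}
This integral is actually independent of the choice of $\sigma$ (any non-negative measure dominating $\mubf$ and $|J|$), by the $1$-homogeneity of $F$.

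Then I would compute the Legendre transform of $F$. For $(a,b) \in \R \times \R^{qd}$, substituting $w = v/s$ inside the supremum,
\begin{equation*}
F^*(a,b) = \sup_{s > 0, \, v} \; \{ as + b \cdot v - s W(v/s) \} = \sup_{s > 0} \; s \, ( a + W^*(b) ),
\end{equation*}
which vanishes when $a + W^*(b) \leq 0$ and equals $+\infty$ otherwise. Thus $F^*$ is the convex indicator of the closed convex cone $K = \{(a,b) : a + W^*(b) \leq 0 \}$, and by $1$-homogeneity of $F$ we have in turn $F(s,v) = \sup_{(a,b) \in K} \; (as + b \cdot v)$.

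Now I would apply the classical duality theorem for convex, positively $1$-homogeneous integral functionals on vector measures (see e.g.~\cite[Chapter 2]{AFP} or the Bouchitté–Buttazzo theory invoked in~\cite{bouchitte2001characterization}), applied to the vector measure $\nu = (\mubf, J)$ with values in $\R \times \R^{qd}$: it gives
\begin{equation*}
\int F\!\left( \frac{\ddr \nu}{\ddr |\nu|}\right) \ddr|\nu| = \sup \left\{ \int (a, b) \cdot \ddr \nu \; : \; (a,b) \in C_c(\Omega \times \R^q, \R \times \R^{qd}), \; (a,b)(x,y) \in K \; \forall (x,y) \right\},
\end{equation*}
which is precisely~\eqref{eq:dual_BW}. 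Joint convexity and joint lower semi-continuity of $(\mubf, J) \mapsto \B_W(\mubf, J)$ with respect to weak-$\star$ convergence are then immediate, since the right-hand side of~\eqref{eq:dual_BW} is a supremum of weak-$\star$ continuous linear functionals indexed by a convex set of admissible pairs $(a,b)$.

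The main technical point to be careful with is the appearance of $W'_\infty$ on the singular part: I would check that, because $W^*$ is finite in a neighborhood of $0$ under Assumption~\ref{asmp:W}, the admissible set of test pairs $(a,b) \in K$ is rich enough to recover $W'_\infty$ through the identity $W'_\infty(v) = \sup\{b \cdot v : b \in \mathrm{dom}(W^*)\}$, which guarantees that the easy inequality ``$\geq$'' in~\eqref{eq:dual_BW} is tight on the singular part. In the case $p > 1$, $\mathrm{dom}(W^*) = \R^{qd}$ and~\eqref{eq:lower_bound_recession} forces $J^\perp = 0$ whenever the left-hand side is finite, so this issue disappears; in the case $p = 1$ one has to exploit the full characterization of $W'_\infty$ as the support function of $\mathrm{dom}(W^*)$, which is the step where the argument is the most delicate.
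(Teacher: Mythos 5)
Your proof is correct and takes essentially the same route as the paper, which simply delegates~\eqref{eq:dual_BW} to the classical duality theory for convex, positively $1$-homogeneous integral functionals of vector measures (citing Lemma 2.9 of the reference by Chizat et al., itself based on Rockafellar and Bouchitt\'e, with non-compactness handled by inner regularity) and reads off lower semi-continuity and convexity from the resulting supremum of weak-$\star$ continuous linear functionals; your perspective-function computation is exactly the content of those references. The only cosmetic slips—calling $K=\{(a,b) : a+W^*(b)\leq 0\}$ a cone (it is a closed convex set, not a cone in general) and omitting the $s=0$ contribution when computing $F^*$—do not affect the argument.
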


\begin{proof}
The dual representation formula~\eqref{eq:dual_BW}, at least on compact subsets of $\Omega \times \R^q$, can be for instance found in~\cite[Lemma 2.9]{chizat2018unbalanced}, where the authors quote Theorem 6 of~\cite{rockafellar1971integrals} together with Lemma A.2 of~\cite{bouchitte1988integral} for the proof. Extension to the non-compact set $\Omega \times \R^q$ is immediate by inner regularity of the integral. The lower semi-continuity, which is a consequence of~\eqref{eq:dual_BW}, can be also be found in~\cite[Theorem 2.34]{AFP}. 
\end{proof}

Note also that as soon as $\mubf$ is non-negative it is always optimal to choose $a = - W^*(b)$ if $W^*(b) < + \infty$, thus the formula reads
\begin{equation}
\label{eq:dual_BW_only_J}
\B_W(\mubf, J) = \sup_{b} \left\{ \int_{\Omega \times \R^q} b \cdot \, \ddr J - \int_{\Omega \times \R^q} W^*(b) \, \ddr \mubf  \right\}
\end{equation}
where the supremum is taken among all $b \in C_c(\Omega \times \R^q, \R^{qd})$ continuous, compactly supported and such that $W^*(b) < + \infty$. It yields directly that the Legendre transform of the map $J \mapsto \B_W(\mubf, J)$ is $b \mapsto \int W^*(b) \, \ddr \mubf$, restricted to $b$ such that $W^*(b) < + \infty$ everywhere.

\subsection{Regularizing maps of measures}

In this appendix we have also included the following lemma to regularize maps of measures. The arguments are by now classical in the case of curves of measures and can be found in~\cite[Chapter 8]{AGS} or~\cite[Chapter 5]{SantambrogioOTAM}. As we use it in Section~\ref{sec:difference_liftings} we state it here for the localized version of $\TEE$, see Section~\ref{sec:localization} for the notations. 

\begin{lm}
\label{lm:regularization_measure_valued}
Let $\mubf \in \Mm(A \times \R^q)$. If $\varepsilon > 0$, $\tilde{A}$ is an open set compactly embedded in $A$ and $\mathcal{V}$ is a neighborhood of $\mubf$ in $\Mm(A \times \R^q)$ for the topology of narrow convergence, then there exists $\tilde{\mubf} \in \mathcal{V}$ and $\tilde{v} : \tilde{A} \times \R^q \to \R^{qd}$ such that:
\begin{enumerate}
\item \emph{Absolute continuity}: the measure $\tilde{\mubf}$ has a density with respect to $\ddr x \otimes \ddr y$ the Lebesgue measure on $\tilde{A} \times \R^q$ that we denote by $\tilde{\rho}$. Moreover, for each $x$, the measure $\tilde{\rho}(x,y) \ddr y$ is a probability measure on $\R^q$.
\item \emph{Compact support in $y$}: there exists $B$ a ball of $\R^q$ such that $\tilde{\rho}$ is supported on $\tilde{A} \times B$.
\item \emph{Smoothness of $\tilde{\rho}$}: the density $\tilde{\rho}$ can be extended to a function of class $C^\infty$ on $\overline{\tilde{A} \times B}$ and bounded from below by a strictly positive constant.
\item \emph{Smoothness of $\tilde{v}$}: the velocity field $\tilde{v}$ is of class $C^\infty$ on $\overline{\tilde{A} \times B}$.
\item \emph{The velocity is almost tangent}: the equation $\nabla_x \tilde{\mubf} + \nabla_y \cdot (\tilde{v} \tilde{\mubf}) = 0$ is satisfied in the sense of distribution over $\tilde{A} \times \R^q$, so that, being $\mathbf{n}_B$ the outward normal to $B$, 
\begin{equation}
\label{eq:continuity_smooth}
\begin{cases}
\nabla_x \tilde{\rho} + \nabla_y \cdot (\tilde{v} \tilde{\rho}) = 0 & \text{in } \tilde{A} \times B, \\
\tilde{v} \cdot \mathbf{n}_B = 0 & \text{on } \tilde{A} \times \dr B, 
\end{cases}
\end{equation}
holds in the classical sense and moreover there holds
\begin{equation}
\label{eq:lm_reg_estimate_energy}
\B_W(\tilde{\mubf},\tilde{v} \tilde{\mubf},\tilde{A}) = \int_{\tilde{A} \times \R^q} W \left( \tilde{v}(x,y) \right)  \tilde{\rho}(x,y) \, \ddr x \ddr y \leq   \TEE(\mubf,A) + \varepsilon.
\end{equation}
\end{enumerate} 
\end{lm}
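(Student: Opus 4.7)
The plan is to construct $(\tilde{\mubf},\tilde{v})$ by composing three classical regularization operations on the pair $(\mubf,J)$, where $J$ is a tangent measure furnished by Lemma~\ref{lemma:existence_tangent_J}. We may assume $\TEE(\mubf,A)<+\infty$, so that $\B_W(\mubf,J,A)=\TEE(\mubf,A)$ and both $\mubf$ and $|J|$ are finite, hence tight, on $A\times\R^q$. Each operation will preserve the continuity equation and yield a quantitative energy inequality; the parameters are chosen in the order $R\to+\infty$, then $\eta\to 0$, then $\lambda\to 0$, each small relative to the previous, so that the final object lies in the narrow neighborhood $\mathcal{V}$ and~\eqref{eq:lm_reg_estimate_energy} holds.

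\emph{Truncation in $y$.} By tightness of the energy measures $W(v)\mubf$ and $W'_\infty\,|J^\perp|$ we can pick $R$ so large that their mass on $A\times\{|y|>R/2\}$ is arbitrarily small. Choose a smooth $1$-Lipschitz retraction $\Pi_R:\R^q\to\overline{B_R}$ which equals the identity on $\overline{B_{R/2}}$, set $T(x,y):=(x,\Pi_R(y))$, and define $\hat{\mubf}:=T_\#\mubf$ and $\hat{J}:=T_\#((D\Pi_R)J)$. A chain-rule calculation on test functions yields $\nabla_x\hat{\mubf}+\nabla_y\cdot\hat{J}=0$ on $A\times\R^q$, with both measures supported in $A\times\overline{B_R}$ and $\hat{\mubf}\in\Mm(A\times\R^q)$. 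A fiberwise Jensen inequality (applied on the fibers of $T$) combined with the pointwise bound $W(D\Pi_R v)\leq\alpha W(v)+\beta$---deduced from $\|D\Pi_R\|_{\mathrm{op}}\leq 1$ together with the two-sided growth condition of Assumption~\ref{asmp:W_stronger}---gives $\B_W(\hat{\mubf},\hat{J},A)\leq\TEE(\mubf,A)+\varepsilon/3$ for $R$ large (the recession term in the case $p=1$ is handled identically).

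\emph{Mollification and strict positivity.} Let $\chi_\eta$ be a smooth nonnegative mollifier on $\R^d\times\R^q$ of support radius $\eta<\mathrm{dist}(\tilde{A},\partial A)$, and set $\mubf^\eta:=\chi_\eta*\hat{\mubf}$, $J^\eta:=\chi_\eta*\hat{J}$. On $\tilde{A}$ the first marginal of $\mubf^\eta$ coincides with $m$ (since $\chi_\eta*m=m$ there), and the continuity equation persists because convolution commutes with distributional derivatives. Writing the convolution as $\int\tau_z\hat{\mubf}\,\chi_\eta(z)\,dz$ (an integral of translates) and invoking joint convexity, positive $1$-homogeneity, and translation invariance of $\B_W$ (the last because $W$ depends only on the matrix argument), Jensen's inequality yields $\B_W(\mubf^\eta,J^\eta,\tilde{A})\leq\B_W(\hat{\mubf},\hat{J},A)$; the pair is now $C^\infty$ and supported in $(A+B_\eta)\times\overline{B_{R+\eta}}$. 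Next, pick a ball $B$ strictly containing the $y$-support of $\mubf^\eta$, set $c_0:=1/m(B)$, and let $\mubf_0$ have density $c_0\mathbf{1}_{\tilde{A}\times B}$; being $x$-independent, $(\mubf_0,0)$ trivially satisfies the continuity equation. For $\lambda\in(0,1)$ set $\tilde{\mubf}:=(1-\lambda)\mubf^\eta+\lambda\mubf_0$ on $\tilde{A}\times\R^q$ (extended by $\mubf$ on $(A\setminus\tilde{A})\times\R^q$, so $\tilde{\mubf}\in\Mm(A\times\R^q)$) and $\tilde{J}:=(1-\lambda)J^\eta$. Then $\tilde{\rho}=(1-\lambda)\rho^\eta+\lambda c_0\geq\lambda c_0>0$ is $C^\infty$ on $\overline{\tilde{A}\times B}$; $\tilde{v}:=\tilde{J}/\tilde{\mubf}$ is $C^\infty$ there and vanishes on the annulus where $J^\eta$ already does (in particular on $\partial B$), giving $\tilde{v}\cdot\mathbf{n}_B=0$; and joint convexity of $\B_W$ yields $\B_W(\tilde{\mubf},\tilde{v}\tilde{\mubf},\tilde{A})\leq(1-\lambda)\B_W(\mubf^\eta,J^\eta,\tilde{A})+\lambda W(0)\,m(\tilde{A})\leq\TEE(\mubf,A)+\varepsilon$ for $\lambda$ small. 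Continuity of the three operations in their parameters places $\tilde{\mubf}$ in $\mathcal{V}$.

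The main obstacle is the truncation step: a naive restriction to $|y|\leq R$ would destroy both the marginal constraint and the continuity equation, forcing the use of the retraction $\Pi_R$ with the compensating factor $D\Pi_R$ in $\hat{J}$; controlling the resulting energy requires the quantitative pointwise estimate $W(D\Pi_R v)\lesssim W(v)+1$, and this is precisely where the upper growth bound of Assumption~\ref{asmp:W_stronger} is essential (the lower bound of Assumption~\ref{asmp:W} alone cannot absorb the composition of $W$ with a bounded linear map into $W$ itself). The other two steps, Jensen for the mollification and convex combination with a reference measure, are routine once the bookkeeping near $\partial B$ is arranged.
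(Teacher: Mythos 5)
Your proof is correct and follows essentially the same route as the paper: a smooth retraction onto a ball in $y$ whose energy cost is controlled by the approximate radiality of Assumption~\ref{asmp:W_stronger}, a mollification in $(x,y)$ handled by convexity/translation invariance of $\B_W$, and a convex combination with the uniform measure on the ball to gain strict positivity, after which $\tilde{v}$ is taken as the density of the regularized momentum. The only cosmetic differences are that the paper uses an \emph{injective} contraction (so the density of the pushed-forward momentum is computed exactly and no fiberwise Jensen inequality is needed) and disposes of the singular part in the case $p=1$ by a preliminary small mollification rather than by bounding the recession function directly.
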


\begin{proof}
Let $J \in \M(A \times \R^q)^{qd}$ tangent to $\mubf$, that is, $\nabla_x \mubf + \nabla_y \cdot J = 0$ is satisfied in the sense of distributions and $\TEE(\mubf,A) = \B_W(\mubf,J,A)$. We will apply successive transformations to $\mubf$ and $J$ in order to obtain the regularity properties while staying in the neighborhood $\mathcal{V}$ of $\mubf$, and keeping the estimate~\eqref{eq:lm_reg_estimate_energy} valid. 

\medskip

\emph{First step: retraction on a ball in the $y$ variable}. For simplicity we assume $J \ll \mubf$. In the case $p=1$ of Assumption~\ref{asmp:W_stronger}, we can always fall back to this case by doing a small mollification of the $(\mu, J)$ variables, see the second step. Calling $B_R$ the centered ball of radius $R$, let us consider $F : \R^q \to B_R$ a smooth injective contraction onto $B_R$, the centered ball of radius $R$, which is the identity over $B_{R/2}$. Extending $F$ to $A \times \R^q$ by $\tilde{F}(x,y) = (x, F(y))$, we consider 
\begin{equation*}
\mubf^{(1)} = \tilde{F} \# \mubf, \qquad J^{(1)} =  \tilde{F} \# ( \nabla_y \tilde{F} \, J ).
\end{equation*} 
A simple change of variable yields $\nabla_x \mubf^{(1)} + \nabla_y \cdot J^{(1)} = 0$. Moreover, calling $v$ the density of $J$ with respect to $\mubf$, there holds
\begin{equation*}
\frac{\ddr J^{(1)}}{\ddr \mubf^{(1)}}(x,y) = (\nabla_y F \circ F^{-1})(y) v(x,y).
\end{equation*}
Assumption~\ref{asmp:W_stronger}, and in particular that $W$ is approximately radial, together with $\nabla_y F$ being non-expansive, yields
\begin{equation*}
W( (\nabla_y F \circ F^{-1})(y) v) \leq C_5 W(v) + C_6
\end{equation*}
for any $y \in \R^q$ and any $v \in \R^{qd}$ for some non-negative constants $C_5, C_6$. Thus we estimate as follows, taking in account that $F$ is the identity over $B_{R/2}$, so that $(\mubf, J)$ and $(\mubf^{(1)}, J^{(1)})$ coincide on $A \times B_{R/2}$:
\begin{align*}
\B_W(\mubf^{(1)},J^{(1)},A) & = \int_{A \times \R^q} W \left( \frac{\ddr J^{(1)}}{\ddr \mubf^{(1)}} \right) \, \ddr \mubf^{(1)} \\
& = \int_{A \times B_{R/2}} W \left( \frac{\ddr J^{(1)}}{\ddr \mubf^{(1)}} \right) \, \ddr \mubf^{(1)} + \int_{A \times (\R^q \setminus B_{R/2})} W \left( \frac{\ddr J^{(1)}}{\ddr \mubf^{(1)}} \right) \, \ddr \mubf^{(1)} \\
& \leq \int_{A \times B_{R/2}} W \left( \frac{\ddr J}{\ddr \mubf} \right) \, \ddr \mubf + \int_{A \times (\R^q \setminus B_{R/2})} \left( C_5 W \left( \frac{\ddr J}{\ddr \mubf} \right)  + C_6 \right)  \, \ddr \mubf \\
& \leq \B_W(\mubf, J, A) + \int_{A \times (\R^q \setminus B_{R/2})} \left( C_5 W \left( \frac{\ddr J}{\ddr \mubf} \right)  + C_6 \right)  \, \ddr \mubf.
\end{align*} 
As $R \to + \infty$ the second term converges to zero thanks to $\B_W(\mubf,J,A) < + \infty$, thus can be made arbitrary small: by appropriately choosing $R$ we can keep the validity of~\eqref{eq:lm_reg_estimate_energy}. Moreover as $R \to + \infty$ the measure $\mubf^{(1)}$ clearly converges to $\mubf$.

\medskip

\emph{Second step: smoothing by convolution}. We take a sequence $(\chi_n)_{n \in \N}$ of smooth and compactly supported mollifiers over $\R^{d} \times \R^q$ which form an approximation of unity, with support shrinking to $0$ as $n \to + \infty$. For $n$ large enough, we set 
\begin{equation*}
(\mubf^{(2)}, J^{(2)}) = (\chi_n * \mubf, \chi_n * J).
\end{equation*}
When restricted to $\tilde{A} \times \R^q$, the first marginal of $\mubf^{(2)}$ is the Lebesgue measure, and the support is included in $\tilde{A} \times B_{R+1}$ at least for $n$ large enough. Moreover as $\nabla_x \mubf^{(1)} + \nabla_y \cdot J^{(1)} = 0$ and that convolution commutes with differential operators we get $\nabla_x \mubf^{(2)} + \nabla_y \cdot J^{(2)} = 0$ on $\tilde{A} \times \R^q$. The measures $\mubf^{(2)}$ and $J^{(2)}$ have a density with respect to $\ddr x \otimes \ddr y$ and this density is smooth provided $\chi_n$ is smooth. We call $\tilde{\rho}^{(2)}$ the density of $\tilde{\mubf}^{(2)}$. Eventually, it is standard that convolution decreases the energy $\B_W$ (see e.g. \cite[Lemma 1.1.23]{chizat2017unbalanced}), thus:
\begin{equation*}
\B_W(\mubf^{(2)}, J^{(2)}, \tilde{A}) \leq \B_W(\mubf^{(1)}, J^{(1)}, A).
\end{equation*}
Of course by choosing $n$ large enough the restriction of $\mubf^{(2)}$ to $\tilde{A} \times \R^q$ will be close to $\mubf^{(1)}$ for the topology of narrow convergence.

\medskip

\emph{Third step: adding a positive constant}. Let $c > 0$ the constant such that $c \1_{y \in B_{R+1}} \, \ddr y$ is a probability measure. We set, for a parameter $\lambda \in (0,1]$,
\begin{equation*}
\tilde{\rho}^{(3)}(x,y) = (1-\lambda) \tilde{\rho}^{(2)}(x,y) + \lambda c \1_{y \in B_{R+1}} , \qquad J^{(3)} = (1-\lambda) J^{(2)}, 
\end{equation*} 
and we write $\mubf^{(3)}$ for the measure with density $\rho^{(3)}$ with respect to the Lebesgue measure. 
Note that $(\mubf^{(3)}, J^{(3)})$ is a convex combination of $(\mubf^{(2)}, J^{(2)})$ and $(c \1_{y \in B_{R+1}} \ddr x \ddr y,0)$. As both satisfy the equation $\nabla_x \mubf + \nabla_y \cdot J = 0$, so does their convex combination. Moreover, again by convexity of $\B_W$, we have
\begin{equation*}
\B_W(\mubf^{(3)}, J^{(3)}, \tilde{A}) \leq (1-\lambda) \B_W(\mubf^{(2)}, J^{(2)}, \tilde{A}) + \lambda m(\tilde{A}) W(0).
\end{equation*}
By taking $\lambda > 0$ small enough, we still satisfy~\eqref{eq:lm_reg_estimate_energy}, and by sending $\lambda \to 0$ we have that the measure $\mubf^{(3)}$ converges to $\mubf^{(2)}$. Note that the density $\tilde{\rho}^{(3)}$ is bounded uniformly from below by $c \lambda > 0$ on $\tilde{A} \times B_{R + 1}$.

\medskip

\emph{Fourth step: conclusion}. We take $\tilde{\mubf} = \mubf^{(3)}$ which has density $\tilde{\rho} = \tilde{\rho}^{(3)}$ with respect to the Lebesgue measure. The bounded support, smoothness and positivity of $\tilde{\mubf}$ comes respectively from the first, second and third point. We define $\tilde{v}$ as the density of $J^{(3)}$ with respect to $\mubf^{(3)}$. The smoothness of $\tilde{v}$ comes from the smoothness of $J^{(3)}$ and $\rho^{(3)}$, as well as the strict positivity of $\rho^{(3)}$. Equation~\eqref{eq:continuity_smooth} holds because $\nabla_x \mubf^{(3)} + \nabla_y \cdot J^{(3)} = 0$ holds in the sense of distributions. The estimate~\eqref{eq:lm_reg_estimate_energy} also holds as we made sure of it during the first, second and third step. If $R$ is large enough, $n$ is large enough and $\lambda > 0$ small enough then $\tilde{\mubf}$ indeed belongs to $\mathcal{V}$.   
\end{proof}

\bibliographystyle{abbrv}
\bibliography{bibliography}

\begin{thebibliography}{10}

\bibitem{adams2003sobolev}
R.~Adams and J.~Fournier.
\newblock {\em Sobolev spaces}.
\newblock Elsevier, second edition, 2003.

\bibitem{alberti2003calibration}
G.~Alberti, G.~Bouchitt{\'e}, and G.~Dal~Maso.
\newblock {The calibration method for the Mumford-Shah functional and
  free-discontinuity problems}.
\newblock {\em Calc. Var. Partial Differential Equations}, 16:299--333, 2003.

\bibitem{Aliprantis2006}
C.~D. Aliprantis and K.~Border.
\newblock {\em {Infinite Dimensional Analysis: a Hitchhiker's Guide}}.
\newblock Springer, 2006.

\bibitem{AFP}
L.~Ambrosio, N.~Fusco, and D.~Pallara.
\newblock {\em Functions of bounded variation and free discontinuity problems}.
\newblock Oxford Mathematical Monographs. Oxford University Press, 2000.

\bibitem{AGS}
L.~Ambrosio, N.~Gigli, and G.~Savar{\'e}.
\newblock {\em {Gradient flows in metric spaces and in the space of probability
  measures}}.
\newblock Lectures in Mathematics ETH Zurich. Birkh{\"a}user Verlag, 2008.

\bibitem{benamou2019second}
J.-D. Benamou, T.~O. Gallou{\"e}t, and F.-X. Vialard.
\newblock {Second-order models for optimal transport and cubic splines on the
  Wasserstein space}.
\newblock {\em Found. Comput. Math.}, 19:1113--1143, 2019.

\bibitem{bogachev}
V.~I. Bogachev.
\newblock {\em Measure Theory}.
\newblock Springer, first edition, 2007.

\bibitem{bouchitte2001characterization}
G.~Bouchitt{\'e} and G.~Buttazzo.
\newblock {Characterization of optimal shapes and masses through
  Monge-Kantorovich equation}.
\newblock {\em J. Eur. Math. Soc.}, 3(2):139--168, 2001.

\bibitem{bouchitte1988integral}
G.~Bouchitt{\'e} and M.~Valadier.
\newblock Integral representation of convex functionals on a space of measures.
\newblock {\em J. Funct. Anal.}, 80(2):398--420, 1988.

\bibitem{brenier1989least}
Y.~Brenier.
\newblock {The least action principle and the related concept of generalized
  flows for incompressible perfect fluids}.
\newblock {\em J. Amer. Math. Soc.}, 2(2):225--255, 1989.

\bibitem{Brenier2003}
Y.~Brenier.
\newblock {Extended Monge-Kantorovich theory}.
\newblock In {\em Optimal transportation and applications}, pages 91--121.
  Springer, 2003.

\bibitem{chen2018measure}
Y.~Chen, G.~Conforti, and T.~T. Georgiou.
\newblock {Measure-valued spline curves: An optimal transport viewpoint}.
\newblock {\em SIAM J. Math. Anal.}, 50(6):5947--5968, 2018.

\bibitem{chizat2017unbalanced}
L.~Chizat.
\newblock {\em {Unbalanced optimal transport: Models, numerical methods,
  applications}}.
\newblock PhD thesis, Universit{\'e} Paris sciences et lettres, 2017.

\bibitem{chizat2018unbalanced}
L.~Chizat, G.~Peyr{\'e}, B.~Schmitzer, and F.-X. Vialard.
\newblock {Unbalanced optimal transport: Dynamic and Kantorovich formulations}.
\newblock {\em J. Funct. Anal.}, 274(11):3090--3123, 2018.

\bibitem{dalMaso2012introduction}
G.~Dal~Maso.
\newblock {\em An introduction to $\Gamma$-convergence}, volume~8 of {\em
  Progress in Nonlinear Differential Equations and Their Applications}.
\newblock Birkhäuser Boston, 2012.

\bibitem{DeLellisSpadaro}
C.~De~Lellis and E.~Spadaro.
\newblock {\em {$Q$-valued functions revisited}}, volume 211.
\newblock American Mathematical Society, 2011.

\bibitem{EkelandTeman}
I.~Ekeland and R.~Temam.
\newblock {\em Convex analysis and variational problems}.
\newblock SIAM, 1999.

\bibitem{ghoussoub2021hidden}
N.~Ghoussoub, Y.-H. Kim, H.~Lavenant, and A.~Z. Palmer.
\newblock Hidden convexity in a problem of nonlinear elasticity.
\newblock {\em SIAM J. Math. Anal.}, 53(1):1070--1087, 2021.

\bibitem{Giusti2003}
E.~Giusti.
\newblock {\em {Direct methods in the calculus of variations}}.
\newblock World Scientific, 2003.

\bibitem{Jost1994}
J.~Jost.
\newblock Equilibrium maps between metric spaces.
\newblock {\em Calc. Var. Partial Differential Equations}, 2(2):173--204, 1994.

\bibitem{kallenberg2017random}
O.~Kallenberg.
\newblock {\em {Random measures, theory and applications -- Volume 1}}.
\newblock Probability Theory and Stochastic Modelling. Springer, 2017.

\bibitem{Korevaar1993}
N.~J. Korevaar and R.~Schoen.
\newblock Sobolev spaces and harmonic maps for metric space targets.
\newblock {\em Comm. Anal. Geom.}, 1(4):561--659, 1993.

\bibitem{laude2016sublabel}
E.~Laude, T.~M{\"o}llenhoff, M.~Moeller, J.~Lellmann, and D.~Cremers.
\newblock {Sublabel-accurate convex relaxation of vectorial multilabel
  energies}.
\newblock In {\em Computer Vision--ECCV 2016: 14th European Conference,
  Amsterdam, The Netherlands, October 11--14, 2016, Proceedings, Part I 14},
  pages 614--627. Springer, 2016.

\bibitem{LavenantPhD}
H.~Lavenant.
\newblock {\em {Courbes et applications optimales à valeurs dans l'espace de
  Wasserstein}}.
\newblock PhD thesis, Universit{\'e} Paris-Saclay, 2019.

\bibitem{Lavenant2019}
H.~Lavenant.
\newblock {Harmonic mappings valued in the Wasserstein space}.
\newblock {\em J. Funct. Anal.}, 277(3):688 -- 785, 2019.

\bibitem{Lisini2007}
S.~Lisini.
\newblock {Characterization of absolutely continuous curves in Wasserstein
  spaces}.
\newblock {\em Calc. Var. Partial Differential Equations}, 28(1):85--120, 2007.

\bibitem{loewenhauser2018functional}
B.~Loewenhauser and J.~Lellmann.
\newblock Functional lifting for variational problems with higher-order
  regularization.
\newblock In {\em Imaging, Vision and Learning Based on Optimization and PDEs:
  IVLOPDE, Bergen, Norway, August 29--September 2, 2016}, pages 101--120.
  Springer, 2018.

\bibitem{mollenhoff2019lifting}
T.~Mollenhoff and D.~Cremers.
\newblock {Lifting vectorial variational problems: a natural formulation based
  on geometric measure theory and discrete exterior calculus}.
\newblock In {\em Proceedings of the IEEE/CVF Conference on Computer Vision and
  Pattern Recognition}, pages 11117--11126, 2019.

\bibitem{pass2015}
B.~Pass.
\newblock Multi-marginal optimal transport: theory and applications.
\newblock {\em ESAIM Math. Model. Numer. Anal.}, 49(6):1771--1790, 2015.

\bibitem{pock2010global}
T.~Pock, D.~Cremers, H.~Bischof, and A.~Chambolle.
\newblock Global solutions of variational models with convex regularization.
\newblock {\em SIAM J. Imaging Sci.}, 3(4):1122--1145, 2010.

\bibitem{pock2008convex}
T.~Pock, T.~Schoenemann, G.~Graber, H.~Bischof, and D.~Cremers.
\newblock A convex formulation of continuous multi-label problems.
\newblock In {\em Computer Vision--ECCV 2008: 10th European Conference on
  Computer Vision, Marseille, France, October 12-18, 2008, Proceedings, Part
  III 10}, pages 792--805. Springer, 2008.

\bibitem{rockafellar1971integrals}
R.~Rockafellar.
\newblock {Integrals which are convex functionals. II}.
\newblock {\em Pacific J. Math.}, 39(2):439--469, 1971.

\bibitem{SantambrogioOTAM}
F.~Santambrogio.
\newblock {Optimal transport for applied mathematicians}.
\newblock {\em Progress in Nonlinear Differential Equations and their
  applications}, 87, 2015.

\bibitem{santambrogio2018regularity}
F.~Santambrogio.
\newblock {Regularity via duality in calculus of variations and degenerate
  elliptic PDEs}.
\newblock {\em J. Math. Anal. Appl.}, 457(2):1649--1674, 2018.

\bibitem{SavareSodoni2022}
G.~Savar{\'e} and G.~Sodini.
\newblock {A simple relaxation approach to duality for Optimal Transport
  problems in completely regular spaces}.
\newblock {\em J. Convex Anal.}, 29(1):001--012, 2022.

\bibitem{Solomon2013}
J.~Solomon, L.~Guibas, and A.~Butscher.
\newblock Dirichlet energy for analysis and synthesis of soft maps.
\newblock In {\em Proceedings of the Eleventh Eurographics/ACMSIGGRAPH
  Symposium on Geometry Processing}, pages 197--206. Eurographics Association,
  2013.

\bibitem{strekalovskiy2014convex}
E.~Strekalovskiy, A.~Chambolle, and D.~Cremers.
\newblock Convex relaxation of vectorial problems with coupled regularization.
\newblock {\em SIAM J. Imaging Sci.}, 7(1):294--336, 2014.

\bibitem{vogt2020connection}
T.~Vogt, R.~Haase, D.~Bednarski, and J.~Lellmann.
\newblock {On the connection between dynamical optimal transport and functional
  lifting}.
\newblock {\em arXiv preprint arXiv:2007.02587}, 2020.

\bibitem{Vogt2018}
T.~Vogt and J.~Lellmann.
\newblock {Measure-valued variational models with applications to
  diffusion-weighted imaging}.
\newblock {\em J. Math. Imaging Vision}, 60(9):1482--1502, 2018.

\bibitem{vogt2019functional}
T.~Vogt and J.~Lellmann.
\newblock {Functional liftings of vectorial variational problems with Laplacian
  regularization}.
\newblock In {\em Scale Space and Variational Methods in Computer Vision: 7th
  International Conference, SSVM 2019, Hofgeismar, Germany, June 30--July 4,
  2019, Proceedings 7}, pages 559--571. Springer, 2019.

\bibitem{vogt2019lifting}
T.~Vogt, E.~Strekalovskiy, D.~Cremers, and J.~Lellmann.
\newblock {Lifting Methods for Manifold-Valued Variational Problems}.
\newblock In {\em Handbook of Variational Methods for Nonlinear Geometric
  Data}, pages 95--119. Springer International Publishing, 2020.

\end{thebibliography}

\bigskip

  \textsc{Department of Decision Sciences, Bocconi University} \par
  Via Roberto Sarfatti, 25, 25100, Milan MI Italy.  \par 
  \textit{Email address}: \texttt{hugo.lavenant@unibocconi.it} \par

\end{document}